\documentclass[oneside]{article}

\usepackage[mathlines]{lineno} 
\usepackage{amsmath}           
\usepackage{etoolbox}          

\newcommand*\linenomathpatch[1]{%
  \cspreto{#1}{\linenomath}%
  \cspreto{#1*}{\linenomath}%
  \csappto{end#1}{\endlinenomath}%
  \csappto{end#1*}{\endlinenomath}%
}
\newcommand*\linenomathpatchAMS[1]{%
  \cspreto{#1}{\linenomathAMS}%
  \cspreto{#1*}{\linenomathAMS}%
  \csappto{end#1}{\endlinenomath}%
  \csappto{end#1*}{\endlinenomath}%
}

\expandafter\ifx\linenomath\linenomathWithnumbers
  \let\linenomathAMS\linenomathWithnumbers
  \patchcmd\linenomathAMS{\advance\postdisplaypenalty\linenopenalty}{}{}{}
\else
  \let\linenomathAMS\linenomathNonumbers
\fi

\linenomathpatch{equation}
\linenomathpatchAMS{gather}
\linenomathpatchAMS{multline}
\linenomathpatchAMS{align}
\linenomathpatchAMS{alignat}
\linenomathpatchAMS{flalign}

\makeatletter
\patchcmd{\mmeasure@}{\measuring@true}{
  \measuring@true
  \ifnum-\linenopenaltypar>\interdisplaylinepenalty
    \advance\interdisplaylinepenalty-\linenopenalty
  \fi
  }{}{}
\makeatother

\usepackage[english]{babel}
\usepackage[T1]{fontenc}
\usepackage{lmodern}

\usepackage{geometry}
\usepackage{amsfonts}
\usepackage{amssymb}
\usepackage{mathtools}
\usepackage{cmll}
\usepackage{stmaryrd}
\usepackage{aligned-overset}
\usepackage{enumitem}
\allowdisplaybreaks

\usepackage{hyperref}
\hypersetup{
    colorlinks = true,
    linkcolor = blue,
    citecolor = blue,
    urlcolor = red
}

\usepackage[noabbrev,capitalise,nameinlink]{cleveref}
\apptocmd{\thebibliography}{\raggedright}{}{}

\usepackage{tikz-cd}
\usepackage{graphicx}
\usepackage{xcolor}
\usepackage{epigraph}
\usepackage{ebproof}

\usepackage{amsthm}
\usepackage{thmtools}

\newtheorem{theorem}{Theorem}[section]
\newtheorem{definition}[theorem]{Definition}
\newtheorem{proposition}[theorem]{Proposition}
\newtheorem{lemma}[theorem]{Lemma}

\newtheorem{remark}[theorem]{Remark}
\newtheorem{example}[theorem]{Example}

\setlist[enumerate,1]{
  align=left,
   labelsep=1em,
  leftmargin=2em,
}

\makeatletter
\@addtoreset{enumi}{theorem}

\AtBeginEnvironment{align}{%
  \stepcounter{enumi}%
}

\let\oldequation\equation
\let\endoldequation\endequation
\renewenvironment{equation}{%
  \refstepcounter{enumi}
  \oldequation
}{%
  \endoldequation
}
\makeatother

\crefname{enumi}{}{}    
\Crefname{enumi}{}{}    
    
\crefname{equation}{}{}
\crefname{equations}{}{}

\title{Extracting an $\mathbb{N}$-filtered differential modality from a differential modality}

\author{Jean-Baptiste Vienney}

\begin{document}

\maketitle

\begin{abstract}
A differential modality is a comonad on an additive symmetric monoidal 
category $(\mathsf{C},\otimes,I)$, whose underlying functor we denote 
$\oc\colon\mathsf{C} \rightarrow \mathsf{C}$, together with some additional 
structure including a differential operator $\partial\colon\oc A \otimes A 
\rightarrow \oc A$. 
A morphism $f\colon\oc A \rightarrow B$ is interpreted as a smooth function 
from $A$ to $B$. The notion of an $\mathbb{N}$-filtered differential modality 
is a variant in 
which a notion of degree is present. Instead of a single functor $\oc\colon 
\mathsf{C} \rightarrow \mathsf{C}$, we ask for a family of functors 
$\oc_{\le n}\colon\mathsf{C} \rightarrow \mathsf{C}$ where $n \in \mathbb{N}$. 
Now, a morphism $f\colon\oc_{\le n} A \rightarrow B$ is interpreted as a smooth 
function 
from $A$ to $B$, with degree less than $n$ for some notion of degree. We prove 
that under mild conditions, every differential modality on an additive symmetric 
monoidal 
category with underlying functor $\oc\colon \mathsf{C} \rightarrow \mathsf{C}$ 
yields an $\mathbb{N}$-filtered differential modality with underlying functors 
$\oc_{\le n}\colon\mathsf{C} \rightarrow \mathsf{C}$. A morphism 
$f\colon\oc_{\le n}A \rightarrow B$ corresponds to a polynomial map of 
degree less than $n$ from $A$ to $B$, 
in the sense that the $(n+1)$-th derivative of $f$ is $0$.
\end{abstract}

\paragraph*{Acknowledgments.} I would like to thank Richard Blute and Jean-Simon Pacaud Lemay for useful discussions and their support of this research. This work was financially supported by the University of Ottawa and NSERC, under the grant awarded to Richard Blute.

\section{Introduction}
This work is concerned with differentiation in additive symmetric monoidal categories. An additive symmetric monoidal category
is a symmetric monoidal category enriched over commutative monoids. The main example is the category $(\mathsf{Vec}_k,\otimes,k)$ of vector spaces over some field $k$. 
A differential modality \cite{DIFCAT,DIFCATREV} on an additive symmetric monoidal category is a comonad $(\oc,m,u)$ together with natural transformations 
$\Delta\colon \oc A \rightarrow \oc A \otimes \oc A$ and $\epsilon\colon\oc A \rightarrow I$ making every object $\oc A$ into a cocommutative comonoid, and with a deriving 
transformation which is a natural transformation $\partial\colon \oc A \otimes A \rightarrow \oc A$ satisfying axioms from calculus. A morphism $f\colon\oc A \rightarrow B$ 
must be understood as a smooth map from $A$ to $B$, while $\partial;f\colon \oc A \otimes A \rightarrow B$ must be understood as the differential of $f$.

In an $R$-graded differential modality \cite{GRADIFCAT} for some rig $R$, the comonad is replaced by an $R$-graded comonad $((\oc_r),(m_{r,s}),u)$ where we have 
an endofunctor $\oc_r\colon \mathsf{C} \rightarrow \mathsf{C}$ for every $r \in R$, a natural transformation $m_{r,s}\colon \oc_{rs}A \rightarrow \oc_r\oc_sA$ for 
all $r,s \in R$ and a natural transformation $u\colon \oc_1A \rightarrow A$. In the same way, the natural transformations $\Delta \colon \oc A \rightarrow \oc A \otimes \oc A$ 
and $\epsilon\colon \oc A \rightarrow I$ are replaced by natural transformations $\Delta_{r,s}\colon \oc_{r+s}A \rightarrow \oc_rA \otimes \oc_sA$ and 
$\epsilon \colon \oc_0A \rightarrow I$. Finally, the deriving transformation is now given by a family of natural transformations 
$\partial_r \colon \oc_rA \otimes A \rightarrow \oc_{r+1}A$. The intuition is that a morphism of type $\oc_r A \rightarrow B$ is a smooth map from $A$ to $B$ 
with degree $r$ for some notion of degree, similar to the degree of a polynomial.

In the conclusion of \cite{GRADIFCAT}, it was asked whether it would be possible to somehow extract a graded differential modality from a differential modality. 
It was also written that it would be interesting to investigate what is added by considering the grading over an ordered semiring. In this work, we answer these two 
questions simultaneously. If $(R,\le)$ is an ordered rig, we define an $R$-filtered differential modality 
as an $R$-graded differential modality together with a natural transformation $t_{r,s}\colon \oc_rA \rightarrow \oc_sA$ whenever $r \ge s$. Given a differential modality, 
we can define by induction a natural transformation $\partial^n\colon\oc A \otimes A^{\otimes n} \rightarrow \oc A$ for every $n \ge 0$. If $f \colon \oc A \rightarrow B$, 
then $\partial^n;f\colon \oc A \otimes A^{\otimes n} \rightarrow B$ must be understood as the $n$-th iterated differential of $f$.

\paragraph{Main result.} 
Recall that in an additive category (that is, a category enriched over commutative monoids), a cokernel of a morphism $f\colon A \rightarrow B$ is a coequalizer of the diagram
\begin{equation*}
\begin{tikzcd}
A \arrow[rr, "f", shift left=2] \arrow[rr, "0"', shift right=2] &  & B~.
\end{tikzcd}
\end{equation*}
Our main result is the following theorem, and most of the paper is devoted to its proof.
\begin{theorem} \label{theorem:extracting}
Let $(\oc,\epsilon,u,\Delta,m,\partial)$ be a differential modality on an additive symmetric monoidal category $(\mathsf{C},\otimes,I)$. Suppose that for all object $A$ and 
$n \in \mathbb{N}$,
\begin{itemize}
\item[1.] $s_n\colon \oc A \rightarrow \oc_{\le n}A$ is a cokernel of $\partial^{n+1}\colon \oc A \otimes A^{\otimes(n+1)} \rightarrow \oc A$,
\item[2.] $s_n \otimes 1_A\colon \oc A \otimes A \rightarrow \oc_{\le n}A \otimes A$ is a cokernel of 
$\partial^{n+1} \otimes 1_A\colon \oc A \otimes A^{\otimes (n+2)} \rightarrow \oc A \otimes A$.
\end{itemize}
 Then, there exists a unique $\mathbb{N}$-graded differential modality $(\oc_{\le n},\epsilon^\le,u^\le,\Delta^\le_{n,p},m^\le _{n,p},\partial^\le_n)$ on $(\mathsf{C},\otimes,I)$ such that:
\begin{itemize}
\item[1.] the endofunctor $\oc_{\le n}\colon\mathsf{C} \rightarrow \mathsf{C}$ applied to any object $A$ gives $\oc_{\le n}A$,
\item[2.] $(s_n)_{n \in \mathbb{N}}$ is a $(0\colon\mathbb{N} \rightarrow 0)$-graded morphism of graded differential modalities.
\end{itemize}
Moreover, there exists a unique family of morphisms $t_{n,p}\colon \oc_{\le n}A \rightarrow \oc_{\le p}A$ defined for all $(n,p) \in \mathbb{N}^2$ with $n \ge p$ 
and $A \in \mathsf{C}$ such that:
\begin{itemize}
\item[1.] $(\oc_{\le n},\epsilon^\le,u^\le,\Delta^\le_{n,p},m^\le _{n,p},\partial^\le_n,t_{n,p})$ is an $\mathbb{N}$-filtered differential modality,
\item[2.] $(s_n)_{n \in \mathbb{N}}$ is a $(0\colon\mathbb{N} \rightarrow 0)$-filtered morphism of $\mathbb{N}$-filtered differential modalities.
\end{itemize}
\end{theorem}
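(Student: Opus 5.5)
We build every structure map on $\oc_{\le n}$ by descending the corresponding map on $\oc A$ along the cokernels $s_n$. Uniqueness then follows automatically, since cokernels are epimorphisms, and each axiom is checked by precomposing with suitable $s$'s. For the underlying functor, given $f\colon A\to B$ we want $\oc_{\le n}f$ with $s_n;\oc_{\le n}f=\oc f;s_n$. By hypothesis 1 this exists and is unique once $\partial^{n+1};\oc f;s_n=0$. By naturality of $\partial^{n+1}$ (proved inductively from naturality of $\partial$), this composite equals $(\oc f\otimes f^{\otimes(n+1)});\partial^{n+1};s_n=0$. Functoriality follows from $s_n$ being epic.

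The remaining structure is handled the same way, once per map.

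\emph{Transitions.} For $n\ge p$ we need $\partial^{n+1};s_p=0$. Since $\partial^{n+1}=(\partial^{p+1}\otimes 1);\partial^{n-p}$ up to reassociation, I first prove the commutation lemma $(\partial^{k}\otimes 1);\partial = (1\otimes\sigma);(\partial\otimes 1);\partial^{k}$, which comes from the interchange rule for $\partial$. With it, $\partial^{n+1}$ factors as $(1\otimes\sigma');\partial^{n-p}\otimes 1;\partial^{p+1}$, and so $\partial^{n+1};s_p=0$.

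\emph{Counit and dereliction.} $\epsilon^\le$ on $\oc_{\le 0}$ is induced from $\epsilon$, using $\partial;\epsilon=0$. $u^\le$ on $\oc_{\le 1}$ is induced from $u$, using $\partial^2;u=0$, which follows from the linear rule $\partial;u=\epsilon\otimes 1$ together with $\partial;\epsilon=0$.

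\emph{Deriving transformation.} $\partial^\le_n\colon\oc_{\le n}A\otimes A\to\oc_{\le n+1}A$ is induced by hypothesis 2 from $\partial;s_{n+1}$. This needs $(\partial^{n+1}\otimes 1);\partial;s_{n+1}=0$, and this composite is literally $\partial^{n+2};s_{n+1}$ by the inductive definition of $\partial^{n+2}$.

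\emph{Comultiplication.} $\Delta^\le_{n,p}\colon\oc_{\le n+p}\to\oc_{\le n}\otimes\oc_{\le p}$ comes from $\Delta;(s_n\otimes s_p)$. Iterating the product rule gives
\[\partial^{k};\Delta=\sum_{i+j=k}\text{(shuffles)};(\partial^{i}\otimes\partial^{j}).\]
With $k=n+p+1$, every term has either $i\ge n+1$ or $j\ge p+1$, so each term is killed by $s_n\otimes s_p$ (using the transitions above to reduce to $\partial^{n+1};s_n=0$ and $\partial^{p+1};s_p=0$).

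\emph{Digging.} $m^\le_{n,p}\colon\oc_{\le np}\to\oc_{\le n}\oc_{\le p}$ comes from $m;\oc s_p;s_n$. We need $\partial^{np+1};m;\oc s_p;s_n=0$. The chain rule expresses $\partial;m$ as $(\Delta\otimes 1);(m\otimes\partial);\partial$-type terms. Iterating, $\partial^k;m$ becomes a sum over set partitions of the $k$ inputs: each block $B$ contributes a $\partial^{|B|}$ inside the inner $\oc$, followed by an outer $\partial^{\#\text{blocks}}$. With $k=np+1$, either there are more than $n$ blocks, and the outer $s_n$ kills the term, or some block has size greater than $p$, and the inner $\oc s_p$ kills it through naturality of the outer $\partial$ and $\Delta$.

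Establishing this iterated chain rule (a Faà di Bruno formula for $\partial^k;m$) cleanly enough to do the pigeonhole bookkeeping is the main obstacle I expect. I would first prove it by induction on $k$, using the chain rule together with the product rule for $\Delta$.

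Finally, every graded and filtered differential modality axiom for the induced maps holds after precomposing with the epimorphisms $s$. There it reduces to the corresponding axiom for $(\oc,\ldots,\partial)$, since by construction $s$ intertwines all structure. Precomposing with $s\otimes 1$ is needed for the axioms involving $\partial^\le$; this is exactly why hypothesis 2 is required, because tensoring need not preserve cokernels. For nested $\oc_{\le n}\oc_{\le p}$ the relevant maps $s_n;\oc_{\le n}s_p=\oc s_p;s_n$ are composites of epis, hence epic. By construction the $s_n$ form a $0$-graded (and filtered) morphism, and epicness forces every structure map and every $t_{n,p}$ to be the one just defined, giving uniqueness.
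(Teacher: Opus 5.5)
Your outline is essentially the paper's proof. Every structure map and each $t_{n,p}$ is descended along the cokernels $s_k$ or $s_k\otimes 1_A$: $\partial;\epsilon=0$ and $\partial^2;u=0$ give $\epsilon^\le,u^\le$; the higher-order product rule gives $\Delta^\le_{n,p}$; the Fa\`a di Bruno formula with the same block-count/block-size pigeonhole gives $m^\le_{n,p}$; and $\partial^{n+2}=(\partial^{n+1}\otimes 1_A);\partial$ gives $\partial^\le_n$. All axioms and both uniqueness claims then follow by cancelling these epimorphisms. Three small remarks: the interchange-rule detour for $t_{n,p}$ is unnecessary, since $\partial^{n+1}=(\partial^{n-p}\otimes 1_{A^{\otimes(p+1)}});\partial^{p+1}$ follows from the inductive definition alone; your claim that $\oc s_p;s_n$ is epic is unjustified (functors need not preserve epimorphisms), but it is never used, because no axiom has domain $\oc_{\le n}\oc_{\le p}A$; and the symmetry rule, whose domain is $\oc_{\le n}A\otimes A\otimes A$, strictly requires $s_n\otimes 1_{A\otimes A}$ to be epic, which hypothesis 2 does not literally give --- the paper's own check of that axiom glosses over the same point.
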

\paragraph{Structure of the paper.} We introduce the main definitions in \cref{sec:2}. In \cref{sec:3}, we deal with some technicalities on inductive definitions, partitions 
and permutations that will be needed later. In \cref{sec:4}, we state the higher-order rules for $\partial^n$ which are a crucial ingredient in the proof of
\cref{theorem:extracting}. The longest proofs for these higher-order rules are deferred to \cref{sec:appendix}. We prove \cref{theorem:extracting} in \cref{sec:5} 
which is the longest section of this paper. Finally, in \cref{sec:example-rel}, we apply our theorem to the multiset differential modality on $\mathsf{Rel}$ and in 
\cref{sec:example}, we apply our theorem to the symmetric algebra differential modality on $(\mathsf{Vec}_k^{\mathrm{op}},\otimes,k)$. We will see that in the example 
of the symmetric algebra, the $\mathbb{N}$-filtered differential modality thus obtained does not behave as could be expected at first.
\paragraph{Notations and conventions.} 
~\\~\\
\emph{Numbers.}
\begin{itemize}
\item $\mathbb{N}$ denotes the set of all the integers $n \ge 0$.
\end{itemize}
\emph{Permutations.}
\begin{itemize}
\item For every $n \ge 0$, $S_n$ denotes the symmetric group of order $n$. Note that $S_0=\{\emptyset\}$.
\item Let $n \ge 2$ and $i,j \in \{1,\dots,n\}$. The permutation $(i,j) \in S_n$ is defined by $(i,j)(k)=k$ for every $k \in \{1,\dots,n\} \backslash \{i,j\}$, 
$(i,j)(i)=j$ and $(i,j)(j)=i$.
\end{itemize}
\emph{Rigs.}
\begin{itemize}\item The letter $R$ will denote an arbitrary rig.\footnote{We define a rig as a set $R$ which is a commutative monoid $(R,+,0)$ and a monoid $(R,\cdot,1)$, 
such that $\cdot$ distributes over $+$ and $0 \cdot r=r \cdot 0=0$. If $R$ and $S$ are two rigs, then a rig homomorphism $\rho$ from $R$ to $S$ is a function from $R$ to $S$ 
which a monoid homomorphism from $(R,+,0)$ to $(S,+,0)$ and a monoid homomorphism from $(R,\cdot,1)$ to $(S,\cdot,1)$.}
\item We will use the notation $(\alpha_r)$ for a family indexed by $r \in R$ and the notation $(\alpha_{r,s})$ for a family indexed by $(r,s) \in R \times R$.
\item For every $n \ge 0$, $\mathbb{N}[S_n]$ denotes the rig whose elements are finite formal sums $\sigma_1+\dots+\sigma_r$ for any $r \ge 0$ (if $r=0$, this formal sum 
is denoted $0$) and $\sigma_1,\dots,\sigma_r \in S_n$. The product is given by 
\begin{equation*}
(\sigma_1+\dots+\sigma_r)(\tau_1+\dots+\tau_s)=\underset{\substack{1 \le i \le r \\ 1 \le j \le s}}{\sum}\sigma_i \circ \tau_j.
\end{equation*}
The multiplicative identity is the identity permutation in $S_n$.
\end{itemize}
\emph{Categories.}
\begin{itemize}
\item If $f\colon A \rightarrow B$ and $g\colon B \rightarrow C$ are morphisms in a same category, we write either $f;g$ or $g \circ f$ for the composite $A \rightarrow C$.
\item If $\mathsf{C}$ is a category and $A \in \mathsf{C}$, we write $1_A$ for the identity morphism on $A$.
\item If $\mathsf{C}$ is a category, then $\mathsf{End}(\mathsf{C})$ is the category of endofunctors of $\mathsf{C}$, that is, all the functors from $\mathsf{C}$ to $\mathsf{C}$, 
and natural transformations between them.
\item Very often, if $\nu$ is a natural transformation from a functor $F$ to a functor $G$, we will write $\nu$ instead of $\nu_A$ for the corresponding morphism from $FA$ to $GA$. It will make diagrams and calculations more readable.
\item If $(\mathsf{C},\otimes,I)$ is a symmetric monoidal category, then we say that it is strict monoidal if the unitors and associator are equalities of functors:
\begin{equation*}
I \otimes - = 1_{\mathsf{C}} = - \otimes I\colon\mathsf{C} \rightarrow \mathsf{C},
\end{equation*}
\begin{equation*}
(- \otimes -) \otimes - = - \otimes (- \otimes -)\colon\mathsf{C}^3 \rightarrow \mathsf{C}.
\end{equation*}
If $\gamma_{A,B}:A \otimes B \rightarrow B \otimes A$ is the symmetry, this implies that
\begin{equation*}
\gamma_{I,A}=\gamma_{A,I}=1_A.
\end{equation*}
In definitions, results and proofs, we ignore the coherence isomorphisms and work as if all symmetric monoidal categories were strict monoidal. Adding the missing coherence 
isomorphisms would be tedious but could be carried out with enough time.
\item We will use the following operator precedence:
\begin{equation*}
\otimes\quad>\quad;\quad>\quad\underset{i \in I}{\sum}\quad>\quad+\quad.
\end{equation*}
For instance, an expression of the form
\begin{equation*}
\underset{i \in I}{\sum}f \otimes g;h+i
\end{equation*}
must be interpreted as
\begin{equation*}
\Big(\underset{i \in I}{\sum}((f \otimes g);h)\Big)+i.
\end{equation*}
\item Let $(\mathsf{C},\otimes,I)$ be a symmetric monoidal category. If $n=0$, we define
\begin{equation*}
\overline{\sigma}:=1_I\colon I \rightarrow I
\end{equation*}
where $\sigma$ is the unique permutation in $S_0$. If $n=1$, we define the natural transformation
\begin{equation*}
\overline{\sigma}_A:=1_A \colon A \rightarrow A
\end{equation*}
where $\sigma$ is the unique permutation in $S_1$. Suppose now that $n \ge 2$ and $\sigma \in S_n$. We can write $\sigma=(i_1,i_1+1);\dots;(i_r,i_r+1)$ for 
some $r \ge 0$ and $i_1,\dots,i_r \in \{1,\dots,n\}$ (if $r=0$, the RHS must be understood as the identity on $\{1,\dots,n\}$). We then define the natural 
isomorphism\footnote{The naturality of $\overline{\sigma}$ is expressed as follows: if $f_1\colon A_1 \rightarrow B_1$, \dots, $f_n\colon A_n \rightarrow B_n$ 
are morphisms in $\mathsf{C}$, then we have 
$f_1\otimes \dots \otimes f_n;\overline{\sigma}_{A_1,\dots,A_n}=\overline{\sigma}_{B_1,\dots,B_n};f_{\sigma^{-1}(1)} \otimes \dots \otimes f_{\sigma^{-1}(n)}$. 
Moreover, we have $(\overline{\sigma}_{A_1,\dots,A_n})^{-1}=(\overline{\sigma^{-1}})_{A_{\sigma^{-1}(1)},\dots,A_{\sigma^{-1}(n)}}$.}
\begin{equation*}
\overline{\sigma}_{A_1,\dots,A_n}\colon A_1 \otimes \dots \otimes A_{n} \rightarrow A_{\sigma^{-1}(1)} \otimes \dots \otimes A_{\sigma^{-1}(n)}
\end{equation*}
as follows. First we define
\begin{equation*}
\tau_k:=\Big((i_1,i_1+1);\dots;(i_k,i_k+1)\Big)^{-1}
\end{equation*}
for every $1 \le k \le r$. Then, we define
\begin{equation*}
A^k_s:=A_{\tau_{k-1}(s)}
\end{equation*}
for all $1 \le k \le r$ and $1 \le s \le n$. Finally, we define
\begin{align*}
\overline{\sigma}_{A_1,\dots,A_n}:=&~1_{A_1^1 \otimes \dots \otimes A_{i_1-1}^1} \otimes \gamma_{A_{i_1}^1,A_{i_1+1}^1} \otimes 1_{A_{i_1+2}^1 \otimes \dots \otimes A_n^1}; \\
&~1_{A_1^2 \otimes \dots \otimes A_{i_1-1}^2} \otimes \gamma_{A_{i_2}^2,A_{i_2+1}^2} \otimes 1_{A_{i_2+2}^2 \otimes \dots \otimes A_n^2}; \\
&~\dots \\
&~1_{A_1^r \otimes \dots \otimes A_{i_r-1}^r} \otimes \gamma_{A_{i_r}^r,A_{i_r+1}^r} \otimes 1_{A_{i_r+2}^r \otimes \dots \otimes A_n^r}\colon \\
&~A_1 \otimes \dots \otimes A_n \longrightarrow A_{\sigma^{-1}(1)} \otimes \dots \otimes A_{\sigma^{-1}(n)}.
\end{align*}
By Mac Lane's coherence theorem for symmetric monoidal categories \cite{COHERENCE}, the natural transformation $\overline{\sigma}_{A_1,\dots,A_n}$ does not depend on 
the decomposition of the permutation $\sigma$ as a product of transpositions.
\end{itemize}
\emph{Multisets.}
\begin{itemize}
\item If $I$ is a set, we define a multiset $M \in \mathcal{M}(I)$ as a function $M\colon I \rightarrow \mathbb{N}$ such that $M(i)=0$ for all but a finite number of $i \in I$.
\item If $M \in \mathcal{M}(I)$, we define the cardinal of $M$ as $|M|:=\underset{i \in I}{\sum}M(i) \in \mathbb{N}$.
\item For every $n \ge 0$, we define $\mathcal{M}_n(I):=\{M \in \mathcal{M}(I),~|M|=n\}$.
\item For all $n \ge 0$ and $i_1,\dots,i_n \in I$, we define the multiset $[i_1,\dots,i_n] \in \mathcal{M}_n(I)$ by 
\begin{equation*}
[i_1,\dots,i_n](j):=|\{1 \le k \le n~|~i_k=j\}|
\end{equation*}
for every $j \in I$.
\item If $N \in \mathcal{M}(\mathcal{M}(I))$, we define $\sum N \in \mathcal{M}(I)$ by $\big(\sum N\big)(i):=\underset{M \in \mathcal{M}(I)}{\sum}N(M)\cdot M(i)$ 
for every $i \in I$. This sum is finite since there exists only a finite number of $M \in \mathcal{M}(I)$ such that $N(M) \neq 0$.
\item If $M,N \in \mathcal{M}(I)$, we define $M+N \in \mathcal{M}(I)$ by $M+N=\sum[M,N]$, or equivalently $(M+N)(i):=M(i)+N(i)$ for every $i \in I$.
\item For every $\alpha \in \mathcal{M}(I)$, we define 
\begin{equation*}
\alpha!:=\underset{\substack{i \in I \\ \alpha(i) \neq 0}}{\prod}\alpha(i)!~.
\end{equation*}
\item For all $\alpha,\beta \in \mathcal{M}(I)$, we write $\alpha \le \beta$ if $\alpha(i) \le \beta(i)$ for every $i \in I$.
\item For all $\alpha,\beta \in \mathcal{M}(I)$ such that $\alpha \le \beta$, we define $\beta-\alpha \in \mathcal{M}(I)$ by the formula $(\beta-\alpha)(i)=\beta(i)-\alpha(i)$ 
for every $i \in I$.
\item For all $\alpha,\beta \in \mathcal{M}(I)$ such that $\alpha \le \beta$, we define the falling factorial
\begin{equation*}
\beta^{\underline{\alpha}}:=\underset{\substack{i \in I \\ \beta(i) \neq 0}}{\prod}\beta(i)(\beta(i)-1)\cdots(\beta(i)-(\alpha(i)-1)) \in \mathbb{N} \backslash \{0\}.
\end{equation*}
For instance, suppose that $i_1, i_2 \in I$ such that $i_1 \neq i_2$. If $\alpha(i_1)=2, \alpha(i_2)=1$ and $\alpha(i)=0$ for every $i \in I \backslash \{i_1,i_2\}$, and 
if $\beta \in \mathcal{M}(I)$ such that $\beta \ge \alpha$, then
\begin{equation*}
\beta^{\underline{\alpha}}=\beta(i_1)(\beta(i_1)-1)\beta(i_2).
\end{equation*}
\end{itemize}
\emph{Finitely supported functions}
\begin{itemize}
\item If $k$ is a field, $M$ a $k$-vector space and $I$ a set, we denote by $M^{(I)}$ the set of all functions $f\colon I \rightarrow M$ such that $f(i)=0$ for all but finitely many $i \in I$.
\end{itemize}
\emph{Abbreviations.}
\begin{itemize}
\item add.\,: additivity;
\item coassoc.\,: coassociativity;
\item def.\,: definition;
\item func.\,: functoriality;
\item iff\,: if and only if;
\item hyp.\,: hypothesis;
\item LHS\,: left-hand Side;
\item nat.\,: naturality;
\item RHS\,: right-hand side;
\item sep.\,: separating.
\item prop.\,: property;
\item th.\,: theorem.
\end{itemize}
\section{Main definitions} \label{sec:2}
In this section, we introduce the definitions which are needed to state \cref{theorem:extracting}. Along the way, we prove a few simple results which will be used in 
the proof of \cref{theorem:extracting}.
\begin{definition} \label{add-sym-cat}
An \emph{additive symmetric monoidal category} is a symmetric monoidal category $(\mathsf{C},\otimes,I)$ such that every homset is a commutative monoid and we have: 
\begin{enumerate}
\item $0;f=0$, \label{add-1}
\item $f;0=0$, \label{add-2}
\item $(f+g);h = (f;h)+(g;h)$, \label{add-3}
\item $f;(g+h)=(f;g)+(f;h)$, \label{add-4}
\item $0 \otimes f=0$, \label{add-5}
\item $f \otimes 0=0$, \label{add-6}
\item $(f+g) \otimes h = (f \otimes h)+(g \otimes h)$, \label{add-7}
\item $f \otimes (g+h)=(f \otimes g)+(f \otimes h)$. \label{add-8}
\end{enumerate}
whenever it makes sense.
\end{definition}
From now on and until the end of this section, we will work in a given additive symmetric monoidal category $(\mathsf{C},\otimes,I)$. The letter $R$ denotes an arbitrary rig.
\begin{definition} \label{def-graded-comonad}
An $R$-graded comonad is a tuple $(\oc,m,u)$ where $\oc=(\oc_r\colon\mathsf{C} \rightarrow \mathsf{C})$ is a family of endofunctors, $m=(m_{r,s}\colon\oc_{rs} A 
\rightarrow \oc_r \oc_s A)$ is a family of natural transformations and $u\colon\oc_1 A \rightarrow A$ is a natural transformation such that the following diagrams commute:
\begin{equation*}
\begin{tikzcd}
\oc_{npq}A \arrow[rr, "{m_{np,q}}"] \arrow[d, "{m_{n,pq}}"'] &  & \oc_{np}\oc_qA \arrow[d, "{m_{n,p}}"] &  & \oc_rA \arrow[r, "{m_{r,1}}"] \arrow[rd, equal] \arrow[d, "{m_{1,r}}"'] & \oc_r\oc_1A \arrow[d, "\oc_ru"] \\
\oc_n\oc_{pq}A \arrow[rr, "{\oc_nm_{p,q}}"']                 &  & \oc_n\oc_p\oc_qA                      &  & \oc_1\oc_rA \arrow[r, "u"']                                      & \oc_rA                         
\end{tikzcd}
\end{equation*}
\end{definition}
\begin{definition} \label{def:graded-comonoid}
A cocommutative $R$-graded comonoid is a tuple $(M,\Delta,\epsilon)$ where $M=(M_r)$ is a family of objects, $\Delta=(\Delta_{r,s}\colon M_{r+s} \rightarrow M_r \otimes M_s)$ 
is a family of morphisms and $\epsilon\colon M_0 \rightarrow I$ is a morphism such that the following diagrams commute:
\begin{equation*}
\begin{tikzcd}
M_{r+s+t} \arrow[rr, "{\Delta_{r+s,t}}"] \arrow[d, "{\Delta_{r,s+t}}"'] &  & M_{r+s} \otimes M_t \arrow[d, "{\Delta_{r,s} \otimes 1}"] & M_0 \arrow[r, "{\Delta_{0,0}}"] \arrow[rd, equal] & M_0 \otimes M_0 \arrow[d, "1 \otimes \epsilon"] & M_{r+s} \arrow[rd, "{\Delta_{s,r}}"'] \arrow[r, "{\Delta_{r,s}}"] & M_r \otimes M_s \arrow[d, "\gamma"] \\
M_r \otimes M_{s+t} \arrow[rr, "{1 \otimes \Delta_{s,t}}"']             &  & {M_r \otimes M_s \otimes M_t~,}                           &                                            & {M_0~,}                                         &                                                               & M_s \otimes M_r~.                  
\end{tikzcd}
\end{equation*}
\end{definition}
\begin{definition} \label{graded-diff-mod}
An $R$-graded differential modality is a tuple $(\oc,m,u,\Delta,\epsilon,\partial)$ where $\oc=(\oc_r\colon\mathsf{C} \rightarrow \mathsf{C})$ is a family of endofunctors; 
$m=(m_{r,s}\colon\oc_{rs} A \rightarrow \oc_r \oc_s A)$ is a family of natural transformations and $u\colon\oc_1 A \rightarrow A$ is a natural transformation such that $(\oc,m,u)$
 is an $R$-graded comonad; $\Delta=(\Delta_{r,s}\colon\oc_{r+s} A \rightarrow \oc_r A \otimes \oc_s A)$ is a family of natural transformations and 
 $\epsilon\colon\oc_0 A \rightarrow I$ is a natural transformation such that $(\oc A, \Delta, \epsilon)$ is an $R$-graded cocommutative comonoid; 
 $\partial=(\partial_{r}\colon\oc_r A \otimes A \rightarrow \oc_{r+1}A)$ is a family of natural transformations; and the following identities are 
 satisfied (for all $r,s,t \in R$):
\begin{enumerate}
\item preservation of comonoid comultiplication by $m$: $m_{r+s,t};\Delta_{r,s}=\Delta_{rt,st};m_{r,t} \otimes m_{s,t}$, \label{def-d(-2)}
\item preservation of comonoid counit by $m$: $m_{0,r};\epsilon=\epsilon$. \label{def-d(-1)} 
\item linear rule: $\partial_0;u=\epsilon \otimes 1$, \label{def-d1}
\item product rule: $\partial_{r+s+1};\Delta_{r+1,s+1}=\Delta_{r+1,s} \otimes 1;1 \otimes \partial_s+\Delta_{r,s+1} \otimes 1;1 \otimes \gamma;\partial_r \otimes 1$, \label{def-d2}
\item chain rule: $\partial_{rs+r+s};m_{s+1,t+1}=\Delta_{st+s,t} \otimes 1;m_{s,t+1} \otimes \partial_t;\partial_s$, \label{def-d3}
\item symmetry rule: $1 \otimes \gamma;\partial_r \otimes 1;\partial_{r+1}=\partial_r \otimes 1;\partial_{r+1}$.  \label{def-d4}
\end{enumerate}
\end{definition}
\begin{definition} \label{diff-mod}
A differential modality is a $0$-graded differential modality where $0$ is the trivial rig.
\end{definition}
For a differential modality, we will not write the indexes since they
are always equal to $0$, for instance we will write $m\colon\oc A
\rightarrow \oc\oc
A$ instead of $m_{0,0}\colon\oc_0 A \rightarrow \oc_0\oc_0 A$. We will need the horizontal composition of natural transformations.
\begin{definition}
If $\alpha\colon F(A) \rightarrow G(A)$ and $\beta\colon H(A) \rightarrow I(A)$ are natural transformations, then the natural transformation 
$\alpha \bullet \beta\colon F(H(A)) \rightarrow G(I(A))$ is defined by:
\begin{equation} \label{bullet-1}
(\alpha \bullet \beta)_A := F(\beta_A);\alpha_{I(A)}.
\end{equation}
\end{definition}
It is well-known (\cite{MACLANE},~II.5) that $\alpha \bullet \beta$ is equivalently given by another expression. 
\begin{proposition}
If $\alpha\colon F(A) \rightarrow G(A)$ and $\beta\colon H(A) \rightarrow I(A)$ are natural transformations, then we have
\begin{equation} \label{bullet-2}
(\alpha \bullet \beta)_A=\alpha_{H(A)};G(\beta_A).
\end{equation}
\end{proposition}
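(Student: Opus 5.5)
The claim is the interchange law for horizontal composition: the two possible composites of $\alpha$ and $\beta$ agree. I will prove it by instantiating the naturality square of $\alpha$ at the single morphism $\beta_A$.

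First, fix an object $A$. Then $\beta_A\colon H(A) \rightarrow I(A)$ is a morphism of $\mathsf{C}$. Because $\alpha\colon F \Rightarrow G$ is natural, for every morphism $g\colon X \rightarrow Y$ we have
\begin{equation*}
F(g);\alpha_Y=\alpha_X;G(g).
\end{equation*}
Second, specialize this to $X=H(A)$, $Y=I(A)$ and $g=\beta_A$. This gives
\begin{equation*}
F(\beta_A);\alpha_{I(A)}=\alpha_{H(A)};G(\beta_A).
\end{equation*}
The left-hand side is $(\alpha \bullet \beta)_A$ by definition \cref{bullet-1}, so we obtain \cref{bullet-2}.

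There is no real obstacle here. The only point to check is that the composites are well typed. Both sides go from $F(H(A))$ to $G(I(A))$: on the left through $F(I(A))$, and on the right through $G(H(A))$. The naturality of $\beta$ is not needed; only the fact that $\beta_A$ is a morphism is used.
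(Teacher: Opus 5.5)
Your proof is correct. Instantiating the naturality square of $\alpha$ at the morphism $\beta_A\colon H(A)\rightarrow I(A)$ is exactly the standard argument the paper relies on when it cites Mac Lane (II.5) instead of writing out a proof, and your remark that only the naturality of $\alpha$ is needed is accurate.
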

We obtain a strict monoidal category $(\mathsf{End}(\mathsf{C}),\bullet,1_\mathsf{C})$
where if $F,G \in \mathsf{End}(\mathsf{C})$, then $F \bullet G:= F \circ G$. The monoidal unit is the identity functor 
$1_{\mathsf{C}}\colon \mathsf{C} \rightarrow \mathsf{C}$ defined by $1_{\mathsf{C}}(X)=X$ and $1_{\mathsf{C}}(f)=f$. 
Moreover, if $F \in \mathsf{End}(\mathsf{C})$, then the identity $1_F$ is the natural transformation defined by $1_F(A)=1_{F(A)}\colon F(A) \rightarrow F(A)$. Note that for every functor $J \in \mathsf{End}(\mathsf{C})$, we have $(1_J \bullet \beta)_A=J(\beta_A)$ and $(\alpha \bullet 1_J)_A=\alpha_{J(A)}$.
\begin{proposition}
In $\mathsf{C}$, $1_A\colon A \rightarrow A$ is a natural transformation and if $\alpha_A\colon F(A) \rightarrow G(A)$ and $\beta_A\colon H(A) \rightarrow I(A)$ 
are two natural transformations, then $(\alpha \otimes \beta)_A=\alpha_A \otimes \beta_A \colon (F \otimes H)(A)=F(A) \otimes H(A) \rightarrow 
(G \otimes I)(A)=G(A) \otimes I(A)$ is a natural transformation.
\end{proposition}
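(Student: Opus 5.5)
The statement to prove is the last proposition: $1_A$ is natural, and $\alpha \otimes \beta$ is natural whenever $\alpha$ and $\beta$ are. Both parts should follow by direct unfolding of the definitions, using only the bifunctoriality of $\otimes$.

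For the first claim, regard $1_A\colon A \rightarrow A$ as a transformation from the identity functor $1_{\mathsf{C}}$ to itself. Naturality asks that for every $f\colon A \rightarrow B$ we have $1_{\mathsf{C}}(f);1_B = 1_A;1_{\mathsf{C}}(f)$. Since $1_{\mathsf{C}}(f)=f$, both sides reduce to $f$ by the unit laws of composition.

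For the second claim, let $f\colon A \rightarrow B$. The functor $F \otimes H$ is defined pointwise, so $(F \otimes H)(f)=F(f) \otimes H(f)$, and similarly $(G \otimes I)(f)=G(f)\otimes I(f)$. We then compute
\begin{equation*}
F(f)\otimes H(f);\alpha_B \otimes \beta_B=(F(f);\alpha_B)\otimes(H(f);\beta_B)=(\alpha_A;G(f))\otimes(\beta_A;I(f))=\alpha_A\otimes\beta_A;G(f)\otimes I(f).
\end{equation*}
The first and last equalities are the interchange law $(g\otimes h);(g'\otimes h')=(g;g')\otimes(h;h')$, which is exactly functoriality of $\otimes\colon\mathsf{C}\times\mathsf{C}\rightarrow\mathsf{C}$. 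The middle equality is the naturality of $\alpha$ and $\beta$ applied componentwise.

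There is no real obstacle here. The only point to state explicitly is that $F\otimes H$ is a functor, namely the composite $\otimes\circ(F,H)$, so that its action on morphisms is $F(f)\otimes H(f)$. Strictness conventions do not interfere, because no coherence isomorphisms occur in this argument.
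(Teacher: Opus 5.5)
Your proof is correct. The paper states this proposition without proof, and your verification is exactly the routine check it leaves implicit: the unit laws for $1_A$ viewed as the identity transformation of $1_{\mathsf{C}}$, and, for $\alpha\otimes\beta$, the interchange law $(g\otimes h);(g'\otimes h')=(g;g')\otimes(h;h')$ combined with the naturality of $\alpha$ and $\beta$ applied in each tensor factor. This is the same interchange step the paper uses when proving the compatibility of the two tensor products on $\mathsf{End}(\mathsf{C})$.
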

We can thus make $\mathsf{End}(\mathsf{C})$ into a symmetric monoidal category $(\mathsf{End}(\mathsf{C}),\otimes,I)$. 
Here, $I \in \mathsf{End}(\mathsf{C})$ denotes the constant functor equal to $I$ given by $I(X)=I$ and $I(f)=1_X$. The two tensor products on $\mathsf{End}(\mathsf{C})$ 
are compatible in the following way.
\begin{proposition} \label{comp-two-tensor-products}
Let $\alpha\colon F(A) \rightarrow G(A)$, $\beta\colon H(A) \rightarrow I(A)$ and $\delta\colon J(A) \rightarrow K(A)$ be natural transformations in $\mathsf{End}(\mathsf{C})$. 
We have
\begin{equation*}
(\alpha \otimes \beta) \bullet \delta=(\alpha \bullet \delta) \otimes (\beta \bullet \delta)\colon F(J(A)) \otimes H(J(A)) \rightarrow G(K(A)) \otimes I(K(A)).
\end{equation*}
\end{proposition}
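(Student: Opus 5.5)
The plan is to verify the equation componentwise, by evaluating both sides at an arbitrary object $A$ and then unfolding the definition \cref{bullet-1} of horizontal composition together with the definition of the pointwise tensor product of functors and natural transformations.

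\textbf{Left-hand side.} By \cref{bullet-1}, the $A$-component of $(\alpha\otimes\beta)\bullet\delta$ is
\begin{equation*}
(F\otimes H)(\delta_A);(\alpha\otimes\beta)_{K(A)}.
\end{equation*}
Since $F\otimes H$ acts on morphisms by $(F\otimes H)(f)=F(f)\otimes H(f)$, this equals
\begin{equation*}
F(\delta_A)\otimes H(\delta_A);\alpha_{K(A)}\otimes\beta_{K(A)}.
\end{equation*}

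\textbf{Right-hand side.} By \cref{bullet-1} again, the $A$-component of $(\alpha\bullet\delta)\otimes(\beta\bullet\delta)$ is
\begin{equation*}
\big(F(\delta_A);\alpha_{K(A)}\big)\otimes\big(H(\delta_A);\beta_{K(A)}\big).
\end{equation*}

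\textbf{Comparison.} Bifunctoriality of $\otimes$ (the interchange law $(f;g)\otimes(f';g')=(f\otimes f');(g\otimes g')$) rewrites the right-hand side as exactly the expression obtained for the left-hand side. Both sides are natural transformations with the same components, so they are equal. The domains and codomains also agree: both sides go from $F(J(A))\otimes H(J(A))$ to $G(K(A))\otimes I(K(A))$.

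\textbf{Remaining point.} The only thing to make explicit is that the functor $F\otimes H$ is defined on morphisms pointwise. This is implicit in the earlier proposition, which asserts that $(\alpha\otimes\beta)_A=\alpha_A\otimes\beta_A$ is natural. No step here is a genuine obstacle. Since coherence isomorphisms are suppressed by the paper's strictness convention, no further bookkeeping is required.
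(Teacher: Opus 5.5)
Your proof is correct and matches the paper's argument step for step. The paper also unfolds the definition of $\bullet$ on the left-hand side, uses that $F\otimes H$ acts pointwise on morphisms, and then applies functoriality of $-\otimes-$ (the interchange law) to obtain the componentwise expression for $(\alpha\bullet\delta)\otimes(\beta\bullet\delta)$.
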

\begin{proof}
We have
\begin{align*}
((\alpha \otimes \beta) \bullet \delta)_A&=(F \otimes H)(\delta_A);(\alpha \otimes \beta)_{K(A)} &\text{\scriptsize{\cref{bullet-1}}} \\
&=F(\delta_A) \otimes H(\delta_A);\alpha_{K(A)} \otimes \beta_{K(A)} \\
&=(F(\delta_A);\alpha_{K(A)}) \otimes (H(\delta_A);\beta_{K(A)}) \\
&=(\alpha \bullet \delta)_A \otimes (\beta \bullet \delta)_A &\text{\scriptsize{\cref{bullet-1}}} \\
&=((\alpha \bullet \delta) \otimes (\beta \bullet \delta))_A.
\end{align*}
\end{proof}
The following lemma is obvious but will be useful when applied to $(\mathsf{End}(\mathsf{C}),\bullet,1_C)$.
\begin{lemma}
Let $(\mathbb{X},\otimes,I)$ be a monoidal category. Suppose given morphisms of the following type: $f\colon A \rightarrow B \otimes C$, $g_1\colon A \rightarrow E$, 
$g_2\colon D \rightarrow E$, $g_3\colon B \rightarrow E$, $g_4\colon C \rightarrow E$, $h\colon E \rightarrow E \otimes E$ such that the diagram
\begin{equation*}
\begin{tikzcd}
A \arrow[r, "g_1"] \arrow[d, "f"']        & E \arrow[d, "h"] \\
B \otimes C \arrow[r, "g_3 \otimes g_4"'] & E \otimes E     
\end{tikzcd}
\end{equation*}
commutes. Then, the diagram
\begin{equation*}
\begin{tikzcd}
A \otimes D \arrow[r, "g_1 \otimes g_2"] \arrow[d, "f \otimes 1_D"'] & E \otimes E \arrow[d, "h \otimes 1_E"] \\
B \otimes C \otimes D \arrow[r, "g_3 \otimes g_4 \otimes g_2"']      & E \otimes E                           
\end{tikzcd}
\end{equation*}
commutes.
\end{lemma}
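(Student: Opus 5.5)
The plan is to tensor the given commuting square on the right with the morphism $g_2\colon D \rightarrow E$, and then use only the functoriality of $\otimes$ (the interchange law) and the convention that associators are ignored. I would write both paths of the new square as composites and check that they agree.

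First, the upper path. By functoriality of $\otimes$, the composite $g_1 \otimes g_2;h \otimes 1_E$ equals $(g_1;h) \otimes (g_2;1_E) = (g_1;h) \otimes g_2$.

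Second, the lower path. Viewing $g_3 \otimes g_4 \otimes g_2$ as $(g_3 \otimes g_4) \otimes g_2$, the composite $f \otimes 1_D;(g_3 \otimes g_4)\otimes g_2$ equals $(f;g_3 \otimes g_4) \otimes (1_D;g_2) = (f;g_3\otimes g_4) \otimes g_2$.

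Third, the hypothesis gives $g_1;h = f;g_3 \otimes g_4$, so the two expressions coincide. Strictly speaking, the target of the lower path is $E \otimes E \otimes E$, so the bottom-right object should read $E\otimes E\otimes E$. The identity proved is $g_1 \otimes g_2;h\otimes 1_E = f\otimes 1_D;g_3\otimes g_4\otimes g_2$ as morphisms $A \otimes D \rightarrow E \otimes E \otimes E$.

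There is no real obstacle here. The only point needing care is to read the triple tensor with the bracketing $(B\otimes C)\otimes D$, which is harmless since we work as if the category were strict. When the lemma is applied in $(\mathsf{End}(\mathsf{C}),\bullet,1_\mathsf{C})$, the same argument uses the interchange law for $\bullet$ in place of functoriality of $\otimes$.
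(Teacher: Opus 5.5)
Your proof is correct and is the intended argument: the paper calls this lemma obvious and omits the proof, and the reasoning it implicitly relies on (``tensoring the diagram with $g_2$'') is exactly your use of bifunctoriality of $\otimes$ on the given square. You are also right that the bottom-right object of the second square should be $E \otimes E \otimes E$; this is a typo in the statement.
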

In $(\mathsf{End}(\mathsf{C}),\bullet,1_C)$, we obtain the following.
\begin{proposition} \label{prop-end-1}
Suppose given natural transformations of the following type in $\mathsf{End}(\mathsf{C})$: $\alpha\colon F \rightarrow G \circ H$, $\beta_1\colon F \rightarrow J$, 
$\beta_2\colon I \rightarrow J$, $\beta_3\colon G \rightarrow J$, $\beta_4\colon H \rightarrow J$, $\delta\colon J \rightarrow J \circ J$ such that the diagram
\begin{equation*}
\begin{tikzcd}
F \arrow[r, "\beta_1"] \arrow[d, "\alpha"']        & J \arrow[d, "\delta"] \\
G \circ H \arrow[r, "\beta_3 \bullet \beta_4"'] & J \circ J   
\end{tikzcd}
\end{equation*}
commutes. Then, the diagram
\begin{equation*}
\begin{tikzcd}
F \circ I \arrow[d, "\alpha \bullet 1_I"'] \arrow[rr, "\beta_1 \bullet \beta_2"] &  & J \circ J \arrow[d, "\delta \bullet 1_J"] \\
G \circ H \circ I \arrow[rr, "\beta_3 \bullet \beta_4 \bullet \beta_2"']      &  & J \circ J \circ J                
\end{tikzcd}
\end{equation*}
commutes.
\end{proposition}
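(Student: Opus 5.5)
The plan is to deduce \cref{prop-end-1} directly from the preceding (obvious) lemma, applied to the strict monoidal category $(\mathsf{End}(\mathsf{C}),\bullet,1_\mathsf{C})$. Its objects are endofunctors and its morphisms are natural transformations. The tensor product of objects is composition of functors, $F \bullet G = F \circ G$, and the tensor product of morphisms is the horizontal composite $\bullet$. The first step is to check that this is indeed a monoidal category, so that the lemma applies. Bifunctoriality of $\bullet$ is the interchange law, which is exactly the equality of \cref{bullet-1} and \cref{bullet-2} (\cite{MACLANE},~II.5). Strict associativity and unitality of $\bullet$ follow from associativity of functor composition and from $1_{\mathsf{C}}$ being a strict unit. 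I will take these facts as already established by the text preceding the proposition.

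The second step is the dictionary between the two statements. Set $\mathbb{X}=\mathsf{End}(\mathsf{C})$, with
\[
A:=F,\quad B:=G,\quad C:=H,\quad D:=I,\quad E:=J,
\]
and with morphisms $f:=\alpha$, $g_1:=\beta_1$, $g_2:=\beta_2$, $g_3:=\beta_3$, $g_4:=\beta_4$, $h:=\delta$. Under this dictionary, $B\otimes C$ is $G\circ H$ and $g_3\otimes g_4$ is $\beta_3\bullet\beta_4$. So the hypothesis square of the lemma is exactly the hypothesis square of the proposition.

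Applying the lemma gives commutativity of its second square. This square translates to
\[
(\alpha\bullet 1_I);(\beta_3\bullet\beta_4\bullet\beta_2)=(\beta_1\bullet\beta_2);(\delta\bullet 1_J),
\]
as maps $F\circ I\to J\circ J\circ J$. Here $A\otimes D=F\circ I$, $1_D=1_I$, and $h\otimes 1_E=\delta\bullet 1_J$, which is the claimed diagram.

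The only point requiring any care is the proof of the lemma itself, if one wants it spelled out. By functoriality of $\otimes$ in a monoidal category,
\[
(f\otimes 1_D);(g_3\otimes g_4\otimes g_2) = (f;(g_3\otimes g_4))\otimes g_2 = (g_1;h)\otimes g_2 = (g_1\otimes g_2);(h\otimes 1_E).
\]
In $\mathsf{End}(\mathsf{C})$, the step where this functoriality is used corresponds to the interchange law for $\bullet$, so that is the one nontrivial ingredient. I expect no real obstacle. The only subtlety is bookkeeping: $\bullet$ is written in the order "outer functor first", so $\beta_3\bullet\beta_4\bullet\beta_2$ must be read as acting on $G\circ H\circ I$ in that order, matching $g_3\otimes g_4\otimes g_2$ on $B\otimes C\otimes D$.
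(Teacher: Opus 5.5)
Your proposal is correct and follows the paper's approach: the paper calls the preceding lemma obvious and obtains the proposition simply by instantiating it in the strict monoidal category $(\mathsf{End}(\mathsf{C}),\bullet,1_\mathsf{C})$, using exactly the dictionary you give. Your one-line check of the lemma via functoriality of $\otimes$, which in $\mathsf{End}(\mathsf{C})$ is the interchange law for $\bullet$, fills in the detail the paper leaves implicit.
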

This other lemma will also be useful when applied to $(\mathsf{End}(\mathsf{C}),\bullet,1_C)$.
\begin{lemma}
Let $(\mathbb{X},\otimes,I)$ be a monoidal category. Suppose given morphisms of the following type: $f\colon B \rightarrow C \otimes D$, $g_1\colon E \rightarrow A$, 
$g_2\colon E \rightarrow C$, $g_3\colon E \rightarrow D$, $g_4\colon E \rightarrow B$, $h\colon E \rightarrow E \otimes E$ such that the diagram
\begin{equation*}
\begin{tikzcd}
E \arrow[d, "g_4"'] \arrow[rr, "h"] &  & E \otimes E \arrow[d, "g_2 \otimes g_3"] \\
B \arrow[rr, "f"']                  &  & C \otimes D                             
\end{tikzcd}
\end{equation*}
commutes. Then, the diagram
\begin{equation*}
\begin{tikzcd}
E \otimes E \arrow[d, "g_1 \otimes g_4"'] \arrow[rr, "1_E \otimes h"] &  & E \otimes E \otimes E \arrow[d, "g_1 \otimes g_2 \otimes g_3"] \\
A \otimes B \arrow[rr, "1_A \otimes f"']                              &  & A \otimes C \otimes D                                         
\end{tikzcd}
\end{equation*}
commutes.
\end{lemma}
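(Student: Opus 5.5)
This lemma is a statement about an arbitrary monoidal category $(\mathbb{X},\otimes,I)$, and the plan is to derive the second square from the first using only functoriality of $\otimes$ (the interchange law). We work as if $\mathbb{X}$ were strict, following the paper's convention; in general, one would insert the associator between $E\otimes(E\otimes E)$ and $(E\otimes E)\otimes E$, and its naturality handles the bookkeeping.

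The first step is to compute the lower-left path of the target diagram. By functoriality of $\otimes$ in each variable,
\begin{equation*}
g_1 \otimes g_4;1_A \otimes f=(g_1;1_A) \otimes (g_4;f)=g_1 \otimes (g_4;f).
\end{equation*}
The second step is to compute the upper-right path in the same way:
\begin{equation*}
1_E \otimes h;g_1 \otimes (g_2 \otimes g_3)=(1_E;g_1) \otimes (h;g_2 \otimes g_3)=g_1 \otimes (h;g_2 \otimes g_3).
\end{equation*}

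The third step is to invoke the hypothesis, namely the commuting square $g_4;f=h;g_2\otimes g_3$. It makes the right-hand factors of the two expressions above equal, so the two paths agree and the second diagram commutes.

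There is no genuine obstacle. The only point needing care is that the factorization $(a;b)\otimes(c;d)=(a\otimes c);(b\otimes d)$ is applied with an identity in the first slot. The argument is the mirror image of the preceding lemma, with the identity placed on the left instead of on the right.
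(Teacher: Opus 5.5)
Your argument is correct and is the intended one. The paper gives no written proof and treats the lemma as immediate from bifunctoriality of $\otimes$; your computation reduces the two paths to $g_1 \otimes (g_4;f)$ and $g_1 \otimes (h;g_2 \otimes g_3)$, applies the hypothesis, and handles the strictness convention appropriately, which is exactly that argument.
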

\begin{proposition} \label{prop-end-2}
Suppose given natural transformations of the following type in $\mathsf{End}(\mathsf{C})$: $\alpha\colon G \rightarrow H \circ I$, $\beta_1\colon J \rightarrow F$, 
$\beta_2\colon J \rightarrow H$, $\beta_3\colon J \rightarrow I$, $\beta_4\colon J \rightarrow G$, $\delta\colon J \rightarrow J \circ J$ such that the diagram
\begin{equation*}
\begin{tikzcd}
J \arrow[d, "\beta_4"'] \arrow[rr, "\delta"] &  & J \circ J \arrow[d, "\beta_2 \bullet \beta_3"] \\
G \arrow[rr, "\alpha"']                      &  & H \circ I                                     
\end{tikzcd}
\end{equation*}
commutes. Then, the diagram
\begin{equation*}
\begin{tikzcd}
J \circ J \arrow[d, "\beta_1 \bullet \beta_4"'] \arrow[rr, "1_J \bullet \delta"] &  & J \circ J \circ J \arrow[d, "\beta_1 \bullet \beta_2 \bullet \beta_3"] \\
F \circ G \arrow[rr, "1_F \bullet \alpha"']                                      &  & F \circ H \circ I                                                     
\end{tikzcd}
\end{equation*}
commutes.
\end{proposition}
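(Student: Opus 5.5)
The statement is Proposition~\ref{prop-end-2}. I will obtain it by specializing the lemma stated just before it to the strict monoidal category $(\mathsf{End}(\mathsf{C}),\bullet,1_{\mathsf{C}})$. This is exactly how Proposition~\ref{prop-end-1} comes from the first lemma.

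The plan is to take $\mathbb{X}=\mathsf{End}(\mathsf{C})$, with tensor $\bullet$ on morphisms and $\circ$ on objects. The data are matched as follows:
\begin{itemize}
\item objects: $A:=F$, $B:=G$, $C:=H$, $D:=I$, $E:=J$;
\item morphisms: $f:=\alpha\colon G\to H\circ I$, $g_1:=\beta_1$, $g_2:=\beta_2$, $g_3:=\beta_3$, $g_4:=\beta_4$, and $h:=\delta\colon J\to J\circ J$.
\end{itemize}
Under this dictionary, the hypothesis square of the lemma reads $h;g_2\otimes g_3=g_4;f$, that is, $\delta;(\beta_2\bullet\beta_3)=\beta_4;\alpha$. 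This is precisely the assumed commutative square.

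The lemma then gives $(1_E\otimes h);(g_1\otimes g_2\otimes g_3)=(g_1\otimes g_4);(1_A\otimes f)$. Translated back, this is $(1_J\bullet\delta);(\beta_1\bullet\beta_2\bullet\beta_3)=(\beta_1\bullet\beta_4);(1_F\bullet\alpha)$, which is the desired diagram.

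The lemma itself holds in any monoidal category by bifunctoriality (the interchange law). Both composites equal $g_1\otimes(h;g_2\otimes g_3)$ and $g_1\otimes(g_4;f)$ respectively, and these agree by the hypothesis. Here one uses $(1\otimes h);(g_1\otimes k)=g_1\otimes(h;k)$ and $(g_1\otimes g_4);(1\otimes f)=g_1\otimes(g_4;f)$.

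The only point needing care is that $(\mathsf{End}(\mathsf{C}),\bullet,1_{\mathsf{C}})$ really is a (strict) monoidal category, in particular that $\bullet$ is functorial. This is the interchange law for horizontal and vertical composition of natural transformations, (\cite{MACLANE},~II.5). It follows from the two expressions \cref{bullet-1} and \cref{bullet-2} together with naturality. Associativity of $\bullet$ also justifies writing $\beta_1\bullet\beta_2\bullet\beta_3$ without brackets.

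Beyond that, the proof is a direct substitution, so I expect no real obstacle.
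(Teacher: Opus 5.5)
Your proposal is correct and matches the paper's approach: the proposition is the preceding lemma specialized to the strict monoidal category $(\mathsf{End}(\mathsf{C}),\bullet,1_{\mathsf{C}})$, and your dictionary $A=F$, $B=G$, $C=H$, $D=I$, $E=J$, $f=\alpha$, $g_i=\beta_i$, $h=\delta$ matches all the types and both squares exactly. Your interchange-law proof of the lemma itself, which the paper does not write out, is also sound.
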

\begin{definition}
An ordered rig $(R,\le)$ is a rig $R$ together with a partial order $\le$ on $R$ such that $r+s \le t+u$ and $rs \le tu$ whenever $r \le t$ and $s \le u$ and $rs \le tu$. 
If $(R,\le)$ and $(S,\le)$ are two ordered rigs, then ordered rig homomorphism from $R$ to $S$ is any rig homomorphism $\rho$ from $R$ to $S$ such that $\rho(r) \le \rho(s)$ 
whenever $r \le s$.
\end{definition}
\begin{example}
$(\mathbb{N},\le)$ and $(\mathbb{R}_{\ge 0}, \le)$ are ordered rigs. However, $(\mathbb{Z},\le)$ and $(\mathbb{R},\le)$ are not ordered rigs since $-2 \le -1$ and $-1 \le -1$, 
but $2>1$.
\end{example}
\begin{definition} \label{def:filtered-diff-mod}
Let $(R,\le)$ be an ordered rig. An $R$-filtered differential modality is an $R$-graded differential modality $(\oc,m,u,\Delta,\epsilon,\partial)$ together with a natural 
transformation $t_{a,b}\colon\oc_aA \rightarrow \oc_bA$ for every $(a,b) \in R^2$ with $a \ge b$, such that $t_{a,a}=1_{\oc_a A}$ for every $a \in R$, $t_{a,b};t_{b,c}=t_{a,c}$ 
for all $a \ge b \ge c$ and the following diagrams commute whenever they make sense (we omit the indexes of the natural transformations for readabilty):
\begin{equation*}
\begin{tikzcd}
\oc_{a+b}A \arrow[rr, "t"] \arrow[d, "\Delta"']     &  & \oc_{c+d}A \arrow[d, "\Delta"] \\
\oc_a A \otimes \oc_b A \arrow[rr, "t \otimes t"'] &  & \oc_c A \otimes \oc_d A      
\end{tikzcd}
\qquad
\begin{tikzcd}
\oc_{ab}A \arrow[rr, "t"] \arrow[d, "m"'] &  & \oc_{cd}A \arrow[d, "m"] \\
\oc_a\oc_bA \arrow[rr, "t\bullet t"']     &  & \oc_cA\oc_dA            
\end{tikzcd}
\qquad
\begin{tikzcd}
\oc_aA \otimes A \arrow[d, "\partial"'] \arrow[rr, "t \otimes 1"] &  & \oc_bA \otimes A \arrow[d, "\partial"] \\
\oc_{a+1}A \arrow[rr, "t"']                                       &  & \oc_{b+1}A~.                            
\end{tikzcd}
\end{equation*}
\end{definition}
Note that every $R$-graded differential modality is filtered in a trivial way by choosing $\le$ to be the discrete order on $R$, that is, $r \le s$ iff $r=s$. In this way, 
a differential modality is the same as a $0$-filtered modality where the trivial rig $0$ is equipped with the discrete order.
\begin{definition} \label{def:morphism-of-graded-diff-mod}
Let $M=(\oc,m,u,\Delta,\epsilon,\partial)$ be an $R$-graded differential modality, $M'=(\oc',m',u',\Delta',\epsilon',\partial')$ an $S$-graded differential modality and 
$\rho\colon S \rightarrow R$ a rig homomorphism. A $\rho$-graded morphism of graded differential modalities from $M$ to $M'$ is a family of natural transformations 
$\phi_r\colon\oc_{\rho(r)} A \rightarrow \oc'_{r}A$ such that the following diagrams commute (we omit the indexes of the natural transformations for readability):
\begin{equation*}
\begin{tikzcd}
\oc_{\rho(r)\rho(s)} A \arrow[rr, "{m}"] \arrow[d, "\phi"'] &  & \oc_{\rho(r)}\oc_{\rho(s)}A \arrow[d, "\phi \bullet \phi"] \\
\oc_{rs}' \arrow[rr, "{m'}"']                        &  & \oc_{r}'\oc_{s}'A~,                              
\end{tikzcd}
\qquad
\begin{tikzcd}
\oc_1A \arrow[rr, "u"] \arrow[d, "\phi"'] &  & A \\
\oc_1'A~, \arrow[rru, "u'"']                  &  &  
\end{tikzcd}
\qquad
\begin{tikzcd}
\oc_{\rho(r)+\rho(s)}A \arrow[rr, "{\Delta}"] \arrow[d, "\phi"'] &  & \oc_{\rho(r)} A \otimes \oc_{\rho(s)}A \arrow[d, "\phi \otimes \phi"] \\
\oc_{r+s}'A \arrow[rr, "{\Delta'}"']                       &  & \oc_{r}' A \otimes \oc_{s}'A~,                                
\end{tikzcd}
\end{equation*}
\begin{equation*}
\begin{tikzcd}
\oc_0A \arrow[r, "\phi"] \arrow[rd, "\epsilon"'] & \oc_0'A \arrow[d, "\epsilon'"] \\
                                                   & I~,                             
\end{tikzcd}
\qquad
\begin{tikzcd}
\oc_{\rho(n)}A\otimes A \arrow[rr, "\phi \otimes 1"] \arrow[d, "\partial"'] &  & \oc_{\rho(n)}'A\otimes A \arrow[d, "\partial'"] \\
\oc_{\rho(n)+1}A \arrow[rr, "\phi"']                                      &  & \oc_{n+1}'A~.                                
\end{tikzcd}
\end{equation*}
\end{definition}
\begin{definition} \label{def-morphism-filtered}
Let $(R,\le)$ and $(S,\le)$ be two ordered rigs. Let $M=(\oc,m,u,\Delta,\epsilon,\partial,t)$ be an $R$-filtered differential modality, 
$M'=(\oc',m',u',\Delta',\epsilon',\partial',t')$ an $S$-filtered differential modality and $\rho\colon S \rightarrow R$ an ordered rig homomorphism. 
A $\rho$-filtered morphism of filtered differential modalities from $M$ to $M'$ is a $\rho$-graded morphism of graded differential modalities from $M$ to $M$' 
such that the following diagram commutes whenever it makes sense (we omit the indexes of the natural transformations for readabilty):
\begin{equation*}
\begin{tikzcd}
\oc_{\rho(r)}A \arrow[d, "\phi"'] \arrow[rr, "t"] &  & \oc_{\rho(s)}A \arrow[d, "\phi"] \\
\oc_rA \arrow[rr, "t"']                           &  & \oc_sA~.                          
\end{tikzcd}
\end{equation*}
\end{definition}
\section{Technicalities on inductive definitions, partitions and permutations} \label{sec:3}
The content of this section will mainly be used to state and prove the higher-order rules for deriving transformations in \cref{prop:higher-order-identities}. 
We start with the definition of $\partial^n\colon \oc A \otimes A^{\otimes n} \rightarrow \oc A$ and three easy results about $\partial^n$.
\begin{definition} \label{def:higher-deriving}
Let $(\oc,m,u,\Delta,\epsilon,\partial)$ be a differential modality. We define $\partial^n\colon\oc A \otimes A^{\otimes n} \rightarrow \oc A$ for every $n \ge 0$ by induction: 
\begin{enumerate}
\item $\partial^0:=1_{\oc A}$, \label{def-d-ind-1}
\item $\partial^{n+1}:=\partial^n \otimes 1_A;\partial$. \label{def-d-ind-2}
\end{enumerate}
In particular, we have $\partial^1=\partial\colon\oc A \otimes A \rightarrow \oc A$.
\end{definition}
The next proposition can easily be proven by induction.
\begin{proposition} \label{nat-dn}
For every $n \ge 0$, $\partial^n\colon \oc A \otimes A^{\otimes n} \rightarrow \oc A$ is a natural transformation.
\end{proposition}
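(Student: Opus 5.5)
We argue by induction on $n$, using the recursive description of $\partial^n$ in \cref{def:higher-deriving}. We have to show that for every $n \ge 0$ the family $(\partial^n)_A\colon \oc A \otimes A^{\otimes n} \rightarrow \oc A$ is natural in $A$. Concretely, for every morphism $f\colon A \rightarrow B$ of $\mathsf{C}$ we need
\begin{equation*}
\oc f \otimes f^{\otimes n};\partial^n_B=\partial^n_A;\oc f,
\end{equation*}
where $f^{\otimes 0}:=1_I$. Under the strictness convention, $\oc A \otimes A^{\otimes 0}=\oc A$.

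\emph{Base case.} For $n=0$ we have $\partial^0=1_{\oc A}$, and the required square is $\oc f;1_{\oc B}=1_{\oc A};\oc f$. This holds trivially.

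\emph{Inductive step.} Assume the equation holds for $\partial^n$. By \cref{def-d-ind-2} we have $\partial^{n+1}=\partial^n \otimes 1;\partial$. By bifunctoriality of $\otimes$,
\begin{equation*}
\oc f \otimes f^{\otimes (n+1)}=(\oc f \otimes f^{\otimes n}) \otimes f.
\end{equation*}
Hence
\begin{align*}
\oc f \otimes f^{\otimes(n+1)};\partial^{n+1}_B
&=(\oc f \otimes f^{\otimes n};\partial^n_B)\otimes f;\partial_B \\
&=(\partial^n_A;\oc f)\otimes f;\partial_B &\text{\scriptsize{induction hyp.}}\\
&=\partial^n_A \otimes 1_A;\oc f \otimes f;\partial_B \\
&=\partial^n_A \otimes 1_A;\partial_A;\oc f &\text{\scriptsize{nat.\ of $\partial$}}\\
&=\partial^{n+1}_A;\oc f.
\end{align*}
The third line uses functoriality of $\otimes$ to factor $(\partial^n_A;\oc f)\otimes(1_A;f)$. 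The fourth line uses that the deriving transformation $\partial$ is natural, which is part of \cref{graded-diff-mod} specialised via \cref{diff-mod}.

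No step here is a real obstacle. The only point requiring care is bookkeeping of the tensor factors: we implicitly use associativity $\oc A \otimes A^{\otimes (n+1)}=(\oc A \otimes A^{\otimes n})\otimes A$. This is an equality under the paper's standing convention that the monoidal structure is strict. In the non-strict case, naturality of the associator would have to be inserted at that step.
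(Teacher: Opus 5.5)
Your proof is correct and is exactly the induction the paper has in mind; the paper gives no details beyond saying the proposition ``can easily be proven by induction.'' As you do, the base case is trivial, and the inductive step unfolds $\partial^{n+1}=\partial^n\otimes 1_A;\partial$ and then uses functoriality of $\otimes$, the induction hypothesis and naturality of $\partial$.
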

\begin{lemma}
Let $(\oc,m,u,\Delta,\epsilon,\partial)$ be a differential modality on an additive symmetric monoidal category $(\mathsf{C},\otimes,I)$. For every $k \in \mathbb{N}$, we have 
\begin{equation} \label{reverse-higher}
\partial^{k+1}=\partial \otimes 1_{A^{\otimes k}};\partial^k. 
\end{equation}
\end{lemma}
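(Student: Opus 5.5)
We argue by induction on $k$, using only the inductive definition of $\partial^n$ (\cref{def:higher-deriving}) and functoriality of $\otimes$. No axiom of the differential modality is needed beyond $\partial$ being a morphism $\oc A\otimes A\rightarrow\oc A$. We work strictly, as the conventions allow, so that $A^{\otimes (k+1)}=A^{\otimes k}\otimes A$ and $1_{A^{\otimes(k+1)}}=1_{A^{\otimes k}}\otimes 1_A$.

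\emph{Base case $k=0$.} The left-hand side is $\partial^1=\partial^0\otimes 1_A;\partial=1_{\oc A}\otimes 1_A;\partial=\partial$. The right-hand side is $\partial\otimes 1_{A^{\otimes 0}};\partial^0=\partial\otimes 1_I;1_{\oc A}=\partial$, since $1_{A^{\otimes 0}}=1_I$ and tensoring with $1_I$ is the identity in the strict setting.

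\emph{Inductive step.} Assume $\partial^{k+1}=\partial\otimes 1_{A^{\otimes k}};\partial^k$. Starting from the definition, we compute
\begin{align*}
\partial^{k+2}&=\partial^{k+1}\otimes 1_A;\partial\\
&=(\partial\otimes 1_{A^{\otimes k}};\partial^k)\otimes 1_A;\partial\\
&=(\partial\otimes 1_{A^{\otimes k}}\otimes 1_A);(\partial^k\otimes 1_A);\partial\\
&=\partial\otimes 1_{A^{\otimes (k+1)}};\partial^{k+1}.
\end{align*}
The second line uses the induction hypothesis. The third uses bifunctoriality of $\otimes$, namely $(f;g)\otimes 1=(f\otimes 1);(g\otimes 1)$. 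The fourth regroups $1_{A^{\otimes k}}\otimes 1_A=1_{A^{\otimes(k+1)}}$ and recognises $\partial^k\otimes 1_A;\partial=\partial^{k+1}$ by \cref{def-d-ind-2}. This is exactly the statement for $k+1$.

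The only delicate point is bookkeeping of the monoidal structure: the identification $\oc A\otimes A\otimes A^{\otimes k}\otimes A=\oc A\otimes A\otimes A^{\otimes(k+1)}$ is an equality only under the strictness convention adopted in the paper. Without strictness, the same argument goes through by inserting associators and using their naturality. No additivity or differential-modality axioms are involved.
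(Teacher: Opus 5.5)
Your proof is correct and is essentially identical to the paper's. Both run the same induction on $k$: the base case reduces both sides to $\partial$, and the inductive step applies the definition of $\partial^{k+2}$, the induction hypothesis, functoriality of $-\otimes 1_A$, and the definition again. Your remark about strictness matches the paper's standing convention of ignoring coherence isomorphisms.
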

\begin{proof}
By induction on $k$.
\begin{itemize}
\item Base case ($k=0$): The LHS is $\partial^1=\partial$ and the RHS is $\partial \otimes 1_{I};1_{\oc A}=\partial$, hence the equality. 
\item Induction step: Suppose that \cref{reverse-higher} holds for some $k \in \mathbb{N}$. Then, we have:
\begin{align*}
\partial^{(k+1)+1} &= \partial^{k+1} \otimes 1_A;\partial &\text{\scriptsize{\cref{def-d-ind-2}}} \\
&=  (\partial \otimes 1_{A^{\otimes k}};\partial^k) \otimes 1_A; \partial &\text{\scriptsize{(induction hypothesis)}} \\
&= \partial \otimes 1_{A^{\otimes (k+1)}};\partial^k \otimes 1_A;\partial &\text{\scriptsize{(functoriality of }}\scriptstyle{-\otimes 1_A)} \\
&=  \partial \otimes 1_{A^{\otimes (k+1)}};\partial^{k+1} &\text{\scriptsize{\cref{def-d-ind-2}}}
\end{align*}
\end{itemize}
\end{proof}
\begin{lemma}
Let $(\oc,m,u,\Delta,\epsilon,\partial)$ be a differential modality on an additive symmetric monoidal category $(\mathsf{C},\otimes,I)$. For any $(k, l) \in \mathbb{N}^2$, we have:
\begin{equation} \label{d-k-l}
\partial^{k+l}=\partial^k \otimes 1_{A^{\otimes l}};\partial^l.
\end{equation}
\end{lemma}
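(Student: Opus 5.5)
We argue by induction on $l$, with $k$ fixed, using the inductive definition of $\partial^n$ (\cref{def:higher-deriving}) together with functoriality of $-\otimes 1_A$. In the strict monoidal setting we identify $A^{\otimes (k+l)}$ with $A^{\otimes k}\otimes A^{\otimes l}$, so that $\oc A \otimes A^{\otimes (k+l)} = \oc A \otimes A^{\otimes k} \otimes A^{\otimes l}$. With this identification, $\partial^k \otimes 1_{A^{\otimes l}}$ has codomain $\oc A \otimes A^{\otimes l}$, which is the domain of $\partial^l$, so both sides of \cref{d-k-l} are well typed.

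\emph{Base case} ($l=0$). The right-hand side is $\partial^k \otimes 1_I;\partial^0 = \partial^k;1_{\oc A} = \partial^k$, using \cref{def-d-ind-1} and strictness ($f\otimes 1_I = f$). This equals the left-hand side.

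\emph{Induction step.} Assume $\partial^{k+l}=\partial^k \otimes 1_{A^{\otimes l}};\partial^l$. Then
\begin{align*}
\partial^{k+(l+1)} &= \partial^{k+l}\otimes 1_A;\partial &\text{\scriptsize{\cref{def-d-ind-2}}}\\
&= (\partial^k \otimes 1_{A^{\otimes l}};\partial^l)\otimes 1_A;\partial &\text{\scriptsize{(induction hypothesis)}}\\
&= \partial^k \otimes 1_{A^{\otimes (l+1)}};\partial^l\otimes 1_A;\partial &\text{\scriptsize{(functoriality of }}\scriptstyle{-\otimes 1_A)}\\
&= \partial^k \otimes 1_{A^{\otimes (l+1)}};\partial^{l+1}. &\text{\scriptsize{\cref{def-d-ind-2}}}
\end{align*}
The third line uses $(f;g)\otimes 1_A = (f\otimes 1_A);(g\otimes 1_A)$ together with $1_{A^{\otimes l}}\otimes 1_A = 1_{A^{\otimes(l+1)}}$.

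The argument has no real obstacle; it mirrors the proof of \cref{reverse-higher}. The only point needing care is the bookkeeping of tensor factors: the new copy of $A$ added at each step must be the rightmost factor, which is exactly how \cref{def-d-ind-2} attaches it.
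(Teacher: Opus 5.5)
Your proof is correct, and its induction step is exactly the paper's ``induction step on $l$''. The only difference is the base case: the paper starts from $(0,0)$ alone and so needs an additional induction step on $k$ (via \cref{reverse-higher}), whereas your base case $l=0$ holds directly for every $k$ because $\partial^k\otimes 1_I;\partial^0=\partial^k$, which makes that extra step unnecessary.
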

\begin{proof}
By induction on $(k,l)$.
\begin{itemize}
\item Base case ($(k,l)=(0,0)$): the LHS is $\partial^0=1_{\oc A}$ and the RHS is 
$\partial^0 \otimes 1_{A^{\otimes 0}};\partial^0=1_{\oc A} \otimes 1_I;1_{\oc A}=1_{\oc A};1_{\oc A}=1_{\oc A}$. We thus have equality.
\item Induction step on $l$: Suppose that \cref{d-k-l} holds for some $(k,l) \in \mathbb{N}^2$. We then have: 
\begin{align*}
\partial^{k+(l+1)}&=\partial^{(k+l)+1}& \text{\scriptsize{(reparenthesing)}} \\
&=\partial^{k+l} \otimes 1_A;\partial& \text{\scriptsize{\cref{def-d-ind-2}}} \\
&=(\partial^k \otimes 1_{A^{\otimes l}};\partial^l) \otimes 1_A;\partial& \text{\scriptsize{(induction hypothesis)}} \\
&=\partial^k \otimes 1_{A^{\otimes (l+1)}};\partial^l \otimes 1_A;\partial&\text{\scriptsize{(functoriality of $-\otimes 1_A$)}} \\
&=\partial^k \otimes 1_{A^{\otimes (l+1)}};\partial^{l+1}& \text{\scriptsize{\cref{def-d-ind-2}}}\\
\end{align*}
\item Induction step on $k$: Suppose that \cref{d-k-l} holds for some $(k,l) \in \mathbb{N}^2$. We then have:
\begin{align*}
\partial^{(k+1)+l} &=\partial^{k+(l+1)}& \text{\scriptsize{(reparenthesing)}} \\
&=\partial^k \otimes 1_{A^{\otimes (l+1)}};\partial^{l+1}& \text{\scriptsize{(induction step on $l$)}} \\
&=\partial^k \otimes 1_{A^{\otimes (l+1)}};\partial \otimes 1_{A^{\otimes l}};\partial^l &\text{\scriptsize{\cref{reverse-higher}}} \\
&=\partial^k \otimes 1_A \otimes 1_{A^{\otimes l}};\partial \otimes 1_{A^{\otimes l}};\partial^l &\text{\scriptsize{functoriality of $- \otimes -$}}
\\
&=(\partial^k \otimes 1_A;\partial) \otimes 1_{A^{\otimes l}};\partial^l &\text{\scriptsize{functoriality of $-\otimes 1_{A^{\otimes l}}$}} \\
&=\partial^{k+1} \otimes 1_{A^{\otimes l}};\partial^l. &\text{\scriptsize{\cref{def-d-ind-2}}} 
\end{align*}
\end{itemize}
\end{proof}
The next definition is necessary to write down some of the identities in \cref{prop:higher-order-identities}.
\begin{definition} \label{def-tau-pi}
\begin{itemize}
\item If $X$ is a set, then a partition of $X$ is a subset $\pi \subseteq \mathcal{P}(X)$ consisting of nonempty disjoints subsets of $X$ whose union is $X$. 
\item For every $n \ge 0$, we define $\mathcal{H}(n) \subseteq \mathcal{P}(\mathcal{P}(\{1,\dots,n\}))$ as the set of all partitions of $\{1,\dots,n\}$. 
We thus have $\mathcal{H}(0)=\{\emptyset\}$.
\item Let $n \ge 0$ and $\pi$ be a partition of $\{1,\dots,n\}$. Write $\pi=\{s_1,\dots,s_{|\pi|}\}$ where $\mathrm{max}\,s_1<\dots<\mathrm{max}\,s_{|\pi|}$ 
and $s_i=\{k_i^1<\dots<k_i^{|s_i|}\}$ for every $1 \le i \le |\pi|$. We define the permutation $\tau_{\pi} \in S_{|\pi|+n}$ as follows:
\begin{equation*}
\left\{
\begin{aligned}
\tau_\pi(i):=&~(1+|s_1|)+\dots+(1+|s_{i-1}|)+1& \forall 1 \le i \le |\pi|, \\
\tau_\pi(|\pi|+k_i^j):=&~(1+|s_1|)+\dots+(1+|s_{i-1}|)+1+j& \forall 1 \le i \le |\pi| \text{ and }1 \le j \le |s_i|. \end{aligned}
\right.
\end{equation*}
In any symmetric monoidal category $(\mathsf{C},\otimes,I)$, we thus obtain a natural transformation\footnote{The natural transformation $\overline{\sigma}$ for 
any permutation $\sigma \in S_n$ is defined in the paragraph ``Notations and conventions'' of the introduction.}
\begin{equation*}
\begin{tikzcd}
A_1 \otimes \dots \otimes A_{|\pi|} \otimes B_1 \otimes \dots \otimes B_n \arrow[d, "{(\overline{\tau_\pi})_{A_1,\dots,A_{|\pi|},B_1,\dots,B_n}}"]                                   \\
A_1 \otimes B_{k_1^1} \otimes \dots \otimes B_{k_1^{|s_1|}} \otimes \dots \otimes A_{|\pi|} \otimes B_{k_{|\pi|}^1} \otimes \dots \otimes B_{k_{|\pi|}^{|s_{|\pi|}|}}~.
\end{tikzcd}
\end{equation*}
\item If $(\oc,m,u,\Delta,\epsilon,\partial)$ is a differential modality on an additive symmetric monoidal category $(\mathsf{C},\otimes,I)$, then we define 
$\Delta^n\colon\oc A \rightarrow (\oc A)^{\otimes n}$ for every $n \ge 1$ by induction: we set $\Delta^1=1_{\oc A}$ and 
\begin{equation} \label{Delta-rec}
\Delta^{n+1}=\Delta^n;1_{(\oc A)^{\otimes(n-1)}} \otimes \Delta.
\end{equation}
\end{itemize}
\end{definition}
In particular, we have $\Delta^2=\Delta\colon\oc A \rightarrow \oc A \otimes \oc A$.
\begin{proposition}
Let $(\oc,m,u,\Delta,\epsilon,\partial)$ be a differential modality on an additive symmetric monoidal category $(\mathsf{C},\otimes,I)$. For every $n \ge 1$, 
we have $\Delta^{n+1}=\Delta;\Delta^n \otimes 1_{\oc A}$.
\end{proposition}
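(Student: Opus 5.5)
We need to prove $\Delta^{n+1}=\Delta;\Delta^n \otimes 1_{\oc A}$ for all $n \ge 1$. The approach is induction on $n$, using only the recursive definition \cref{Delta-rec} of $\Delta^n$, the coassociativity of $\Delta$ (the first diagram in \cref{def:graded-comonoid}, specialised to the trivial rig, i.e.\ $\Delta;\Delta \otimes 1_{\oc A}=\Delta;1_{\oc A}\otimes\Delta$), and functoriality of $\otimes$.

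For the base case $n=1$, \cref{Delta-rec} gives
\[
\Delta^2=\Delta^1;1_{(\oc A)^{\otimes 0}}\otimes\Delta=1_{\oc A};\Delta=\Delta .
\]
On the other side, $\Delta;\Delta^1\otimes 1_{\oc A}=\Delta;1_{\oc A}\otimes 1_{\oc A}=\Delta$, so the two agree.

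For the induction step, assume $\Delta^{n+1}=\Delta;\Delta^n\otimes 1_{\oc A}$ for some $n\ge 1$. Unfold $\Delta^{n+2}$ with \cref{Delta-rec} and then apply the induction hypothesis:
\[
\Delta^{n+2}=\Delta^{n+1};1_{(\oc A)^{\otimes n}}\otimes\Delta=\Delta;\Delta^n\otimes 1_{\oc A};1_{(\oc A)^{\otimes n}}\otimes\Delta .
\]
By functoriality (bifunctoriality) of $\otimes$, the two morphisms $\Delta^n\otimes 1_{\oc A}$ and $1_{(\oc A)^{\otimes n}}\otimes\Delta$ act on different tensor factors, so they can be interchanged:
\[
\Delta;\Delta^n\otimes 1_{\oc A};1_{(\oc A)^{\otimes n}}\otimes\Delta=\Delta;1_{\oc A}\otimes\Delta;\Delta^n\otimes 1_{\oc A\otimes\oc A}.
\]
Coassociativity replaces $\Delta;1_{\oc A}\otimes\Delta$ by $\Delta;\Delta\otimes 1_{\oc A}$, which gives $\Delta;(\Delta;\Delta^n\otimes 1_{\oc A})\otimes 1_{\oc A}$. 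Using the induction hypothesis in reverse, the bracketed part is $\Delta^{n+1}$, so $\Delta^{n+2}=\Delta;\Delta^{n+1}\otimes 1_{\oc A}$, as required.

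There is no real obstacle. The only care needed is the bookkeeping of the identity factors $1_{(\oc A)^{\otimes k}}$, which strictness of the monoidal structure makes harmless. The substantive ingredients are just the interchange via functoriality of $\otimes$ and a single use of coassociativity.
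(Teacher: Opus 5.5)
Your proof is correct: the base case, the interchange $\Delta^n\otimes 1_{\oc A};1_{(\oc A)^{\otimes n}}\otimes\Delta=1_{\oc A}\otimes\Delta;\Delta^n\otimes 1_{\oc A\otimes\oc A}$, the single use of coassociativity, and the reuse of the induction hypothesis to refold $\Delta;\Delta^n\otimes 1_{\oc A}$ into $\Delta^{n+1}$ all check out. The paper states this proposition without proof, and your induction is the routine argument it leaves to the reader.
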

If $X$ and $I$ are two sets and $A \in X^I$, then we define the disjoint union $\underset{i \in I}{\bigsqcup}A(i)$ as
\begin{equation} \label{def:disjoint-union}
\underset{i \in I}{\bigsqcup}A(i):=\{(i,a)~|~i \in I,~a \in A(i)\}.
\end{equation}
The two following propositions will be used in the proof of \cref{id5}.
\begin{proposition} \label{prop-bij}
For every $n \ge 0$, we have a bijection
\begin{equation} \label{the-bij}
\rho \colon \underset{\pi \in \mathcal{H}(n)}{\bigsqcup}\{1,\dots,|\pi|+1\} \simeq \mathcal{H}(n+1).
\end{equation}
\end{proposition}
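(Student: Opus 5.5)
We construct $\rho$ explicitly as the map ``insert the new element $n+1$ into a partition of $\{1,\dots,n\}$'', and check that it is bijective by writing down its inverse.

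\emph{The map $\rho$.} Let $\pi\in\mathcal{H}(n)$ and $1\le i\le|\pi|+1$. Write $\pi=\{s_1,\dots,s_{|\pi|}\}$ with $\mathrm{max}\,s_1<\dots<\mathrm{max}\,s_{|\pi|}$, the ordering already fixed in \cref{def-tau-pi}. Set
\begin{equation*}
\rho(\pi,i):=\begin{cases}\big(\pi\setminus\{s_i\}\big)\cup\{s_i\cup\{n+1\}\} & \text{if } 1\le i\le|\pi|,\\ \pi\cup\{\{n+1\}\} & \text{if } i=|\pi|+1.\end{cases}
\end{equation*}
The element $n+1$ either joins one of the $|\pi|$ existing blocks or forms a new singleton block. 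In both cases the result consists of nonempty pairwise disjoint subsets of $\{1,\dots,n+1\}$ whose union is $\{1,\dots,n+1\}$, because $n+1\notin\bigcup\pi$. Hence $\rho(\pi,i)\in\mathcal{H}(n+1)$.

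\emph{The inverse $\theta$.} Let $\pi'\in\mathcal{H}(n+1)$ and let $b$ be the unique block of $\pi'$ containing $n+1$.
\begin{itemize}
\item If $b=\{n+1\}$, set $\theta(\pi'):=(\pi'\setminus\{b\},\,|\pi'|)$.
\item Otherwise, set $\pi:=(\pi'\setminus\{b\})\cup\{b\setminus\{n+1\}\}$, which is a partition of $\{1,\dots,n\}$ since $b\setminus\{n+1\}\neq\emptyset$. Let $i$ be the position of $b\setminus\{n+1\}$ in the max-ordering of the blocks of $\pi$, and set $\theta(\pi'):=(\pi,i)$.
\end{itemize}
In the first case $|\pi'\setminus\{b\}|=|\pi'|-1$, so the index $|\pi'|$ lies in the allowed range $\{1,\dots,|\pi|+1\}$. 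In the second case $|\pi|=|\pi'|$ and $1\le i\le|\pi|$.

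\emph{Checking the two composites.}
\begin{itemize}
\item \emph{$\theta\circ\rho=1$.} If $i\le|\pi|$, then $n+1$ lies in the non-singleton block $s_i\cup\{n+1\}$. Removing $n+1$ gives back $\pi$, and the recovered block $s_i$ sits in position $i$. This holds because the positions are computed in $\pi$ itself, not in $\rho(\pi,i)$. If $i=|\pi|+1$, then $\theta$ removes the singleton $\{n+1\}$ and returns index $|\pi|+1$.
\item \emph{$\rho\circ\theta=1$.} This follows by the same case split on whether the block $b$ of $n+1$ is a singleton.
\end{itemize}

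\emph{Main point of care.} The only subtlety is that the index $i$ must be defined relative to the ordering of $\pi$, not of $\rho(\pi,i)$. Inserting $n+1$ into a block changes its maximum, so it reorders the blocks of $\rho(\pi,i)$. Defining $\theta$ through the position of $b\setminus\{n+1\}$ in $\pi$ avoids this problem, and no real obstacle remains; the argument is routine.
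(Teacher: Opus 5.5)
Your argument is correct and proves the statement as written, but it uses a different bijection from the paper's. You insert the new element as the maximum $n+1$, directly into $\pi$. The paper first shifts $\pi$ to a partition $\pi+1$ of $\{2,\dots,n+1\}$ and inserts the new element as the minimum $1$: $\rho(\pi,1)$ adjoins the singleton $\{1\}$, and for $2\le i\le|\pi|+1$, $\rho(\pi,i)$ puts $1$ into the block $s_{i-1}+1$. The verification has the same shape in both: an explicit inverse, a case split on whether the new element forms a singleton block, and a check of both composites.

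Your choice saves the relabelling. However, as you observe, it sends the enlarged block (and the new singleton) to the last position of the max-ordering, so the index must be read off in $\pi$. The paper's choice costs the shift but preserves the max-ordering, because adding the minimum to a nonempty block does not change its maximum. Hence the paper's inverse can read the index directly off the partition of $\{1,\dots,n+1\}$: $i=j+1$ where $t_j$ is the block containing $1$, and a singleton $\{1\}$ is necessarily $t_1$.

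This difference matters for the rest of the paper, though not for the bare statement. The particular $\rho$ built in the paper's proof is the one described in \cref{recorded-facts} and in the coherence remarks of the appendix. It is tailored to the induction step of the Fa\`a di Bruno rule, which works as follows:
\begin{itemize}
\item the new variable enters as the first copy of $A$ through $\partial^{n+1}=\partial\otimes 1_{A^{\otimes n}};\partial^n$;
\item the term of the higher-order product rule 2 acting on the first factor (the one fed into $m$) creates a singleton block, indexed by $i=1$;
\item the term acting on factor $i\ge 2$ enlarges $s_{i-1}$ without moving it.
\end{itemize}
Your $\rho$ does not satisfy those recorded facts. For instance, for $i\le|\pi|$ its blocks come in the order $s_1,\dots,s_{i-1},s_{i+1},\dots,s_{|\pi|},s_i\cup\{n+1\}$, and the new singleton is last rather than $t_1$. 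So your construction proves the existence statement, but it could not replace the paper's proof without reworking \cref{recorded-facts} and the Fa\`a di Bruno computation that depends on it.
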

\begin{proof}
\emph{Definition of $\rho$}: Let $(\pi,i) \in \underset{\pi \in \mathcal{H}(n)}{\bigsqcup}\{1,\dots,|\pi|+1\}$. Write 
\begin{equation*}
\pi=\{s_1,\dots,s_{|\pi|}\}
\end{equation*}
where $\mathrm{max}\,s_1<\dots<\mathrm{max}\,s_{|\pi|}$. 
We obtain a partition $\pi+1$ of $\{2,\dots,n+1\}$ as follows:
\begin{equation*}
\pi+1:=
\{\{t+1~|~t \in s\}~|~s \in \pi\}.
\end{equation*}
If $i=1$, we define
\begin{equation} \label{eq-rho-pi-1}
\rho(\pi,i):=(\pi+1) \cup \{1\}.
\end{equation}
If $2 \le i \le |\pi|+1$, we define 
\begin{equation} \label{eq-rho-pi-i}
\rho(\pi,i):=((\pi+1) \backslash \{s_{i-1}+1\}) \cup \{(s_{i-1}+1) \cup \{1\}\}.
\end{equation}

\emph{Definition of $\chi$, the inverse of $\rho$}: Let $\theta \in \mathcal{H}(n+1)$. Write
\begin{equation*}
\theta=\{t_1,\dots,t_{|\theta|}\}
\end{equation*}
where $\mathrm{max}\,t_1<\dots<\mathrm{max}\,t_{|\theta|}$. 
Let $1 \le j \le |\theta|$ such that $1 \in t_j$. If $|t_j| \ge 2$, we define
\begin{equation*}
\chi_0(\theta):=(\theta \backslash \{t_j\}) \cup \{t_j \backslash \{1\}\}
\end{equation*}
and $i=j+1$. If $|t_j|=1$, we define 
\begin{equation*}
\chi_0(\theta):=\theta \backslash \{t_j\}
\end{equation*}
and $i=1$. We obtain a partition $\chi_0(\theta)$ of $\{2,\dots,n+1\}$. We then define
\begin{equation*}
\chi_*(\theta):=\{\{t-1~|~t \in s\}~|~s \in \chi_0(\theta)\}.
\end{equation*}
Finally, we define $\chi(\theta):=(\chi_*(\theta),i)$.

We thus obtain two functions 
\begin{equation*}
\rho\colon \underset{\pi \in \mathcal{H}(n)}{\bigsqcup}\{1,\dots,|\pi|+1\} \longrightarrow \mathcal{H}(n+1)
\end{equation*}
and
\begin{equation*}
\chi\colon \mathcal{H}(n+1) \longrightarrow \underset{\pi \in \mathcal{H}(n)}{\bigsqcup}\{1,\dots,|\pi|+1\}.
\end{equation*}

\emph{Checking that $\chi \circ \rho=\mathsf{id}$}: Let $(\pi,i)$ in the domain of $\rho$. If $i=1$, let $\theta=(\pi+1) \cup \{1\}$. We must prove that 
$\chi(\theta)=(\pi,i)$. We have $|\theta|=|\pi|+1$, $t_1=\{1\}$, $t_2=s_1+1$, \dots, $t_{|\pi|+1}=s_{|\pi|}+1$. Thus $j=1$ and $|t_j|=1$. It follows that 
$\chi_0(\theta)=\{s_1+1,\dots,s_{|\pi|}+1\}$. Then $\chi_*(\theta)=\{s_1,\dots,s_{|\pi|}\}=\pi$. We obtain that $\chi(\theta)=(\pi,i)$.

If $2 \le i \le |\pi|+1$, let $\theta=((\pi+1) \backslash \{s_{i-1}+1\}) \cup \{(s_{i-1}+1) \cup \{1\}\}$. We must prove that $\chi(\theta)=(\pi,i)$. We have 
$|\theta|=|\pi|$, $t_1=s_1+1$, \dots, $t_{i-2}=s_{i-2}+1$, $t_{i-1}=(s_{i-1}+1) \cup \{1\}$, $t_i=s_i+1$, \dots, $t_{|\pi|}=s_{|\pi|}+1$. Thus $j=i-1$ and 
$|t_j|=|s_{i-1}|+1 \ge 2$. It follows that $\chi_0(\theta)=\{s_1+1,\dots,s_{|\pi|}+1\}$. Then $\chi_*(\theta)=\{s_1,\dots,s_{|\pi|}\}$. We obtain that $\chi(\theta)=(\pi,i)$.

\emph{Checking that $\rho \circ \chi=\mathsf{id}$}: Let $\theta$ in the domain of $\chi$. Let $\pi=\chi_*(\theta)$ and $i,j$ be as in the definition of 
$\chi$. If $|t_j| \ge 2$, we have $i=j+1 \ge 2$, $|\pi|=|\theta|$ and $\pi+1=\chi_0(\theta)=\{t_1,\dots,t_{j-1},t_j \backslash \{1\},t_{j+1},\dots,t_{|\theta|}\}$. 
It follows that $\rho(\pi,i)=(\chi_0(\theta) \backslash \{t_j \backslash \{1\}\}) \cup \{(t_j \backslash \{1\}) \cup \{1\}\}=(\chi_0(\theta) 
\backslash \{t_j \backslash \{1\}\}) \cup \{t_j\}=\theta$.

If $|t_j|=1$, we have $j=1$, $t_j=\{1\}$, $i=1$ and $\pi+1=\chi_0(\theta)=\{t_2,\dots,t_{|\theta|}\}$. Thus $\rho(\pi,i)=(\pi+1) \cup \{1\}=\theta$.

\emph{Remark on the case $n=0$}: The proof is valid for any $n \ge 0$, thus in particular when $n=0$. In this case the LHS in \cref{the-bij} is 
$\underset{\pi \in \{\emptyset\}}{\bigsqcup}\{1\}=\{(\emptyset,1)\}$ and the RHS is $\mathcal{H}(1)=\{\{1\}\}$. By what's above, $\rho$ is defined by 
$\rho((\emptyset,1))=\{1\}$ and $\chi$ is defined by $\chi(\{1\})=(\emptyset,1)$, which are by the way the unique functions with such domains and codomains.
\end{proof}
The following proposition records some facts about the bijection $\cref{the-bij}$.
\begin{proposition} \label{recorded-facts}
Let $n \ge 0$ and $\pi \in \mathcal{H}(n)$. Write $\pi=\{s_1,\dots,s_{|\pi|}\}$ where $\mathrm{max}\,s_1<\dots<\mathrm{max}\,s_{|\pi|}$ and for every 
$1 \le i \le |\pi|+1$, write $\rho(\pi,i)=\{t_1,\dots,t_{|\rho(\pi,i)|}\}$ where $\mathrm{max}\,t_1<\dots<\mathrm{max}\,t_{|\rho(1,\pi)|}$.
\begin{itemize}
\item 
If $i=1$, we have:
\begin{equation} \label{rho-1-first}
|\rho(\pi,1)|=|\pi|+1,
\end{equation}
\begin{equation} \label{rho-1-second}
|t_1|=1,
\end{equation}
\begin{equation} \label{rho-1-third}
|t_r|=|s_{r-1}| \text{ for every }2 \le r \le |\rho(\pi,1)|.
\end{equation}
\item If $2 \le i \le |\pi|+1$, we have
\begin{equation} \label{rho-2-first}
|\rho(\pi,i)|=|\pi|,
\end{equation}
\begin{equation} \label{rho-2-second}
|t_r|=|s_r| \text{ for every }1 \le r \le |\rho(\pi,i)| \text{ such that } r \neq i-1,
\end{equation}
\begin{equation} \label{rho-2-third}
|t_{i-1}|=|s_{i-1}|+1.
\end{equation}
\end{itemize}
\end{proposition}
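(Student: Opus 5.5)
We prove the facts about the sizes of the blocks of $\rho(\pi,i)$ by reading them off the explicit description of $\rho$ in the proof of \cref{prop-bij}. Throughout, the key observation is that the shift $s \mapsto s+1$ preserves cardinalities, $|s+1|=|s|$. It is also strictly order-preserving on maxima, $\mathrm{max}(s+1)=\mathrm{max}\,s+1$. Hence the blocks $s_1+1,\dots,s_{|\pi|}+1$ of $\pi+1$ are still listed in increasing order of their maxima. All these maxima are at least $2$, since $\pi+1$ is a partition of $\{2,\dots,n+1\}$.

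\emph{Case $i=1$.} By \cref{eq-rho-pi-1}, $\rho(\pi,1)=(\pi+1)\cup\{\{1\}\}$. This is a disjoint union of $\pi+1$ with the new block $\{1\}$, which is not among the shifted blocks. So $|\rho(\pi,1)|=|\pi|+1$, giving \cref{rho-1-first}. The block $\{1\}$ has maximum $1$, which is smaller than every other maximum, so $t_1=\{1\}$ and $|t_1|=1$, giving \cref{rho-1-second}. The remaining blocks in increasing order of maxima are $t_r=s_{r-1}+1$ for $2\le r\le|\pi|+1$, and therefore $|t_r|=|s_{r-1}|$, giving \cref{rho-1-third}. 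This is exactly the computation already made in the proof of \cref{prop-bij}, where we checked that $\chi\circ\rho=\mathsf{id}$.

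\emph{Case $2\le i\le|\pi|+1$.} By \cref{eq-rho-pi-i}, $\rho(\pi,i)$ is obtained from $\pi+1$ by replacing the single block $s_{i-1}+1$ with $(s_{i-1}+1)\cup\{1\}$. The number of blocks is therefore unchanged, so $|\rho(\pi,i)|=|\pi|$, giving \cref{rho-2-first}. Adding $1$ to a block whose maximum is at least $2$ does not change that maximum. Hence the ordering by maxima is the same as for $\pi+1$: $t_r=s_r+1$ for $r\neq i-1$, and $t_{i-1}=(s_{i-1}+1)\cup\{1\}$.

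Since $1\notin s_{i-1}+1$, we get $|t_{i-1}|=|s_{i-1}|+1$, which is \cref{rho-2-third}. For $r\neq i-1$ we have $|t_r|=|s_r+1|=|s_r|$, which is \cref{rho-2-second}.

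The only step requiring any care is confirming that the indexing by increasing maxima is preserved. That holds because shifting is monotone on maxima and adjoining $1$ never changes a maximum $\ge 2$. No real obstacle is expected.
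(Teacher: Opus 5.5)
Your proposal is correct and follows essentially the same route as the paper. The paper states this proposition without a separate proof. The facts are read off from the explicit listing of the blocks of $\rho(\pi,i)$ by increasing maxima, made while checking $\chi\circ\rho=\mathsf{id}$ in the proof of \cref{prop-bij}: $t_1=\{1\}$ and $t_r=s_{r-1}+1$ when $i=1$, and $t_{i-1}=(s_{i-1}+1)\cup\{1\}$ with $t_r=s_r+1$ otherwise. That is exactly what you do, and your check that the ordering by maxima is preserved is the right point of care.
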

The next definition is needed to write down \cref{id3}.
\begin{definition}
\begin{itemize}
\item Let $k,l \ge 0$. A $(k,l)$-unshuflle is any permutation $\sigma \in S_{k+l}$ such that the inequalities
\begin{align*}
&\sigma^{-1}(1)<\sigma^{-1}(2)<\dots<\sigma^{-1}(k) \\
\text{ and }&\sigma^{-1}(k+1)<\sigma^{-1}(k+2)<\dots<\sigma^{-1}(k+l)
\end{align*}
hold.
\item We define $\mathsf{Unsh}(n,k) \subseteq S_n$ as the set of all the $(k,n-k)$-unshuffles.
\item We define the natural transformation
\begin{equation*}
\mathsf{unsh}(n,k)_A \colon A^{\otimes n} \rightarrow A^{\otimes n}
\end{equation*}
for $n \ge 0$ and $0 \le k \le n$ by the formula
\begin{equation} \label{formula-unsh}
\mathsf{unsh}(n,k)_A:=\underset{\sigma \in \mathsf{Unsh}(n,k)}{\sum}\overline{\sigma}_{A,\dots,A}~.
\end{equation}
\end{itemize}
\end{definition}
The rest of the section is devoted to the proof of \cref{prop-unsh} which will be used to prove \cref{id3}.
It will be convenient to use the following definition.
In this definition, we use the group rigs $\mathbb{N}[S_n]$ which are defined in the paragraph ``Notations and conventions'' of the introduction.
\begin{definition}
The additive symmetric strict monoidal category $\mathsf{S}_+$ has as objects all the integers $n \ge 0$. If $n,p \ge 0$ are such that $n \neq p$, then 
$\mathsf{S}_+(n,p) := \emptyset$. For every $n \ge 0$, we define $\mathsf{S}_+(n,n) := \mathbb{N}[S_n]$. The composite $a \circ b$ where $a,b \in S_+(n,n)$ is 
their product $ab \in S_+(n,n)$ as defined in the introduction. The identity on $n$ is the multiplicative identity in $\mathsf{S}_+(n,n)$. Let $n,p \ge 0$.
 We define $n \otimes p:= n+p$. If $\sigma \in S_n$ and $\tau \in S_p$, then we define $\sigma \otimes \tau \in \mathsf{S}_+(n+p)$ by
\begin{equation*}
(\sigma \otimes \tau)(k)=
\left\{
\begin{aligned}
\sigma(k) &\text{ if }1 \le k \le n, \\
n+\tau(k-n) &\text{ if }n+1 \le k \le n+p. 
\end{aligned}
\right.
\end{equation*}
If $\sigma_1+\dots+\sigma_r \in \mathsf{S}_+(n,n)$ and $\tau_1+\dots+\tau_s \in \mathsf{S}_+(p,p)$, the we define
\begin{equation*}
(\sigma_1+\dots+\sigma_r) \otimes (\tau_1+\dots+\tau_s):=\underset{\substack{1 \le i \le r \\ 1 \le j \le s}}{\sum}\sigma_i \otimes \tau_j.
\end{equation*}
The monoidal unit is $0$. The symmetry $\gamma_{n,p}\colon n \otimes p \rightarrow p \otimes n$ is the permutation in $S_{n+p}$ defined by
\begin{equation*}
\gamma_{n,p}(k)=
\left\{
\begin{aligned}
k+p &\text{ if }1 \le k \le n, \\
k-n&\text{ if } n+1 \le k \le n+p.
\end{aligned}
\right.
\end{equation*}
\end{definition}
The next lemma will be useful when we consider the natural transformations $\mathsf{unsh}(n,k)$ in $S_+$.
\begin{lemma} \label{lemma-bar}
For all $n \ge 0$ and $\sigma \in S_n$, we have $\overline{\sigma}_{1,\dots,1}=\sigma \in \mathsf{S}_+(n,n)$. 
\end{lemma}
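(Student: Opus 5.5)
The statement to prove is \cref{lemma-bar}: for every $\sigma \in S_n$, the morphism $\overline{\sigma}_{1,\dots,1}$ computed in $\mathsf{S}_+$ equals $\sigma$ viewed as an element of $\mathsf{S}_+(n,n)$. The plan is to reduce everything to the generating case of an adjacent transposition, and then use functoriality of composition.

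First, the degenerate cases. For $n=0$, $\overline{\sigma}=1_I=1_0$, which is the identity of $S_0$. For $n=1$, $\overline{\sigma}=1_1$, which is the identity of $S_1$.

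For $n \ge 2$, write $\sigma=(i_1,i_1+1);\dots;(i_r,i_r+1)$. Since all objects equal $1$, every $A^k_s=1$. The definition then gives
\begin{equation*}
\overline{\sigma}_{1,\dots,1}=\big(1_{i_1-1}\otimes\gamma_{1,1}\otimes 1_{n-i_1-1}\big);\dots;\big(1_{i_r-1}\otimes\gamma_{1,1}\otimes 1_{n-i_r-1}\big).
\end{equation*}
If $r=0$, this is the identity and we are done.

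Second, the key computation. By the formula for $\gamma_{n,p}$, the morphism $\gamma_{1,1}\in S_2$ is the transposition $(1,2)$. Using the definition of $\sigma\otimes\tau$ twice, $1_{i-1}\otimes(1,2)\otimes 1_{n-i-1}$ fixes every $k\notin\{i,i+1\}$ and swaps $i$ and $i+1$. So it is exactly the permutation $(i,i+1)\in S_n$, and the identities $1_m$ are the identity permutations.

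Third, assembling. In $\mathsf{S}_+$, $f;g=g\circ f$ is the rig product $gf$, which for single permutations is composition of functions. Hence the composite above equals $(i_1,i_1+1);\dots;(i_r,i_r+1)$ read as permutation composition. The diagrammatic ``$;$'' used in writing $\sigma$ is the same convention as $;$ in the category, so this is $\sigma$.

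The main obstacle is this bookkeeping of composition order. I would check it on a single two-step product, $(1,2);(2,3)$ in $S_3$, computing both sides explicitly, to confirm the two conventions match rather than being inverse to each other. By coherence, the choice of decomposition of $\sigma$ is irrelevant.
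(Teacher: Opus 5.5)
Your proposal is correct and follows essentially the same route as the paper: handle $n=0,1$ directly, decompose $\sigma$ into adjacent transpositions, observe that $1_a\otimes\gamma_{1,1}\otimes 1_b=(a+1,a+2)$ in $\mathsf{S}_+$, and reassemble the composite. You check explicitly that the diagrammatic ``$;$'' on permutations agrees with composition in $\mathsf{S}_+$ via the rig product, which the paper leaves implicit; the appeal to coherence is not actually needed, since your computation returns $\sigma$ for every decomposition.
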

\begin{proof}
We follow the definition of $\overline{\sigma}_{A_1,\dots,A_n}$ in the paragraph ``Notations and conventions'' of the introduction. Here, we will have $A_1=\dots=A_n=1$.
 If $n=0$, or $n=1$, then the identity is clear. Suppose that $n \ge 2$. 

We first write $\sigma=(i_1,i_1+1);\dots;(i_r,i_r+1)$ for some $r \ge 0$ and $i_1,\dots,i_r \in \{1,\dots,n\}$. We then have 
\begin{align*}
\overline{\sigma}_{1,\dots,1}=&~1_{i_1-1} \otimes \gamma_{1,1} \otimes 1_{n-(i_1+1)}; \\
&~1_{i_2-1} \otimes \gamma_{1,1} \otimes 1_{n-(i_2+1)}; \\
&~\dots \\
&~1_{i_r-1} \otimes \gamma_{1,1} \otimes 1_{n-(i_r+1)}\colon \\
&~n \longrightarrow n.
\end{align*}
But it easy to check that for all $a,b \ge 0$ such that $a+b=n-2$, we have 
\begin{equation*}
1_a \otimes \gamma_{1,1} \otimes 1_b=(a+1,a+2) \in S_n
\end{equation*}
so that
\begin{align*}
\overline{\sigma}_{1,\dots,1}=(i_1,i_1+1);(i_2;i_2+1);\dots;(i_r,i_r+1)=\sigma \in S_n.
\end{align*}
\end{proof}
The following definition and proposition \cref{prop:strict-fun} will let us transfer certain identities from $(S_+,\otimes,I)$ to an arbitrary additive symmetric strict 
monoidal category.
\begin{definition} \label{def-mon-fun}
Let $(\mathsf{C},\otimes,I)$ and $(\mathsf{D},\otimes,I)$ be additive symmetric strict monoidal categories. An additive symmetric strict monoidal functor from 
$(\mathsf{C},\otimes,I)$ to $(\mathsf{D},\otimes,I)$ is a functor $F\colon \mathsf{C} \rightarrow \mathsf{D}$ such that we have the following equality of functors:
\begin{equation*}
F(-\otimes -)=F(-) \otimes F(-),
\end{equation*}
$F(I)=I$, $\gamma_{F(A),F(B)}=F(\gamma_{A,B})$ for all $A,B \in \mathsf{C}$, $F(f+g)=F(f)+F(g)$ and $F(0)=0$.
\end{definition}
The four following lemmas will be used to prove \cref{prop:strict-fun}.
\begin{lemma}
Let $(\mathsf{C},\otimes,I)$ and $(\mathsf{D},\otimes,I)$ be additive symmetric strict monoidal categories and let $F$ be an additive symmetric strict monoidal functor 
from $(\mathsf{C},\otimes,I)$ to $(\mathsf{D},\otimes,I)$. For all $n \ge 0$, $\sigma \in \mathfrak{S}_n$ and $A_1,\dots,A_n$, we have 
\begin{equation} \label{preserve-sigma}
F(\overline{\sigma}_{A_1,\dots,A_n})=\overline{\sigma}_{F(A_1),\dots,F(A_n)}.
\end{equation}
\end{lemma}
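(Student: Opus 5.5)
The lemma states that $F$ sends the canonical symmetry isomorphism $\overline{\sigma}_{A_1,\dots,A_n}$ to $\overline{\sigma}_{F(A_1),\dots,F(A_n)}$. The plan is to unfold the definition of $\overline{\sigma}_{A_1,\dots,A_n}$ from the paragraph ``Notations and conventions'' and push $F$ through it using the strict monoidal and symmetry-preserving properties of $F$.

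The cases $n=0$ and $n=1$ are immediate. For $n=0$, $\overline{\sigma}=1_I$ and $F(1_I)=1_{F(I)}=1_I$, since $F(I)=I$. For $n=1$, $\overline{\sigma}_{A_1}=1_{A_1}$ and $F(1_{A_1})=1_{F(A_1)}$ by functoriality.

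For $n\ge 2$, fix a decomposition $\sigma=(i_1,i_1+1);\dots;(i_r,i_r+1)$ and the associated objects $A^k_s=A_{\tau_{k-1}(s)}$. Then $\overline{\sigma}_{A_1,\dots,A_n}$ is a composite of $r$ morphisms, the $k$-th being
\begin{equation*}
1_{A^k_1\otimes\dots\otimes A^k_{i_k-1}}\otimes\gamma_{A^k_{i_k},A^k_{i_k+1}}\otimes 1_{A^k_{i_k+2}\otimes\dots\otimes A^k_n}.
\end{equation*}
The argument proceeds in three steps. First, functoriality of $F$ turns $F$ of this composite into the composite of the $F$-images of the factors. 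Second, for each factor, the equality of functors $F(-\otimes-)=F(-)\otimes F(-)$, used both on objects and on morphisms and iterated by strictness, gives
\begin{equation*}
F(1_X\otimes\gamma_{B,C}\otimes 1_Y)=1_{F(X)}\otimes F(\gamma_{B,C})\otimes 1_{F(Y)}.
\end{equation*}
Here $F(X)=F(A^k_1)\otimes\dots\otimes F(A^k_{i_k-1})$, and when the block is empty we use $F(I)=I$. Third, $F(\gamma_{B,C})=\gamma_{F(B),F(C)}$ by hypothesis.

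These steps yield exactly the defining composite of $\overline{\sigma}_{F(A_1),\dots,F(A_n)}$, computed with the same decomposition of $\sigma$. This works because the objects $F(A)^k_s$ attached to $F(A_1),\dots,F(A_n)$ are $F(A_{\tau_{k-1}(s)})=F(A^k_s)$, as the $\tau_k$ depend only on the decomposition of $\sigma$ and not on the objects. Coherence guarantees that $\overline{\sigma}_{F(A_1),\dots,F(A_n)}$ is independent of the decomposition chosen, so the equality \cref{preserve-sigma} holds.

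The only potentially delicate point is the second step: applying the strict-monoidality equality of functors to a tensor of many factors, including empty blocks of identities. This is handled by a short induction on the number of tensor factors, using $F(1_X)=1_{F(X)}$ together with $F(I)=I$. Additivity of $F$ is not needed for this lemma.
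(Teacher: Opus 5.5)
Your proposal is correct and follows the same route as the paper. Both fix a decomposition of $\sigma$ into adjacent transpositions, write $\overline{\sigma}_{A_1,\dots,A_n}$ and $\overline{\sigma}_{F(A_1),\dots,F(A_n)}$ as the corresponding composites (with the objects $F(A^k_s)$), and match them factor by factor using functoriality, strict monoidality and $F(\gamma_{B,C})=\gamma_{F(B),F(C)}$; you simply spell out details (the cases $n\le 1$, empty identity blocks, iterated tensors) that the paper compresses into a single appeal to the definition of an additive symmetric strict monoidal functor.
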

\begin{proof}
We first write $\sigma=(i_1,i_1+1);\dots;(i_r,i_r+1)$ for some $r \ge 0$ and $i_1,\dots,i_r \in \{1,\dots,n\}$. We then have 
\begin{align} \label{big-sigma-1}
\overline{\sigma}_{A_1,\dots,A_n}=&~1_{A_1^1 \otimes \dots \otimes A_{i_1-1}^1} \otimes \gamma_{A_{i_1}^1,A_{i_1+1}^1} \otimes 1_{A_{i_1+2}^1 \otimes \dots \otimes A_n^1}; \\
&~1_{A_1^2 \otimes \dots \otimes A_{i_1-1}^2} \otimes \gamma_{A_{i_2}^2,A_{i_2+1}^2} \otimes 1_{A_{i_2+2}^2 \otimes \dots \otimes A_n^2}; \notag \\
&~\dots \notag \\
&~1_{A_1^r \otimes \dots \otimes A_{i_r-1}^r} \otimes \gamma_{A_{i_r}^r,A_{i_r+1}^r} \otimes 1_{A_{i_r+2}^r \otimes \dots \otimes A_n^r}\colon \notag \\
&~A_1 \otimes \dots \otimes A_n \longrightarrow A_{\sigma^{-1}(1)} \otimes \dots \otimes A_{\sigma^{-1}(n)} \notag
\end{align}
where if we define
\begin{equation*}
\tau_k:=\Big((i_1,i_1+1);\dots;(i_k,i_k+1)\Big)^{-1}
\end{equation*}
for every $1 \le k \le r$, then, we have
\begin{equation*}
A^k_s=A_{\tau_{k-1}(s)}
\end{equation*}
for all $1 \le k \le r$ and $1 \le s \le n$. In the same way, we have
\begin{align} \label{big-sigma-2}
\overline{\sigma}_{F(A_1),\dots,F(A_n)}=&~1_{F(A_1^1) \otimes \dots \otimes F(A_{i_1-1}^1)} \otimes \gamma_{F(A_{i_1}^1),F(A_{i_1+1}^1)} \otimes 1_{F(A_{i_1+2}^1) 
\otimes \dots \otimes F(A_n^1)}; \\
&~1_{F(A_1^2) \otimes \dots \otimes F(A_{i_1-1}^2)} \otimes \gamma_{F(A_{i_2}^2),F(A_{i_2+1}^2)} \otimes 1_{F(A_{i_2+2}^2) \otimes \dots \otimes F(A_n^2)}; \notag \\
&~\dots \notag \\
&~1_{F(A_1^r) \otimes \dots \otimes F(A_{i_r-1}^r)} \otimes \gamma_{F(A_{i_r}^r),F(A_{i_r+1}^r)} \otimes 1_{F(A_{i_r+2}^r) \otimes \dots \otimes F(A_n^r)}\colon \notag\\
&~F(A_1) \otimes \dots \otimes F(A_n) \longrightarrow F(A_{\sigma^{-1}(1)}) \otimes \dots \otimes F(A_{\sigma^{-1}(n)}). \notag
\end{align}
By using \cref{def-mon-fun}, \cref{big-sigma-1,big-sigma-2}, we conclude that \cref{preserve-sigma} holds.
\end{proof}
The three following lemmas are consequences of Mac Lane's coherence theorem for symmetric monoidal categories. In each case and with the notations of \cite{COHERENCE}, we have two natural transformations which are morphisms 
in $\mathsf{Pit}_\otimes(\mathsf{C})$ with same domain and codomain, and thus are equal.
\begin{lemma} \label{comp-bar}
Let $(\mathsf{C},\otimes,I)$ be an additive symmetric strict monoidal category. For all $n \ge 0$, $\sigma,\tau \in S_n$, the two natural transformations\footnote{The 
first natural transformation is the composite of the natural transformations $\overline{\sigma}_{A_1,\dots,A_n}$ followed by the natural transformation
$(\overline{\tau}_{\sigma^{-1}})_{A_1,\dots,A_n}=\overline{\tau}_{A_{\sigma^{-1}(1)},\dots,A_{\sigma^{-1}(n)}}\colon A_{\sigma^{-1}(1)} \otimes \dots \otimes A_{\sigma^{-1}(n)} 
\rightarrow A_{\sigma^{-1}(\tau^{-1}(1))} \otimes \dots \otimes A_{\sigma^{-1}(\tau^{-1}(n))}$.}
\begin{equation} \label{first-nart}
\overline{\sigma}_{A_1,\dots,A_n};\overline{\tau}_{A_{\sigma^{-1}(1)},\dots,A_{\sigma^{-1}(n)}}\colon A_1 \otimes \dots \otimes A_n \rightarrow A_{\sigma^{-1}(\tau^{-1}(1))} 
\otimes \dots \otimes A_{\sigma^{-1}(\tau^{-1}(n))}
\end{equation}
and
\begin{equation}
\overline{\sigma;\tau}_{A_1,\dots,A_n} \colon A_1 \otimes \dots \otimes A_n \rightarrow A_{\sigma^{-1}(\tau^{-1}(1))} \otimes \dots \otimes A_{\sigma^{-1}(\tau^{-1}(n))}
\end{equation}
are equal.
\end{lemma}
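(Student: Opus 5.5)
The statement is Lemma \ref{comp-bar}. I will not grind through the symmetric monoidal coherence machinery. Instead I will reduce the claim to the fact, already quoted from \cite{COHERENCE} in the notation paragraph, that $\overline{\sigma}_{A_1,\dots,A_n}$ does not depend on the chosen decomposition of $\sigma$ into adjacent transpositions.

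First I fix decompositions $\sigma=(i_1,i_1+1);\dots;(i_r,i_r+1)$ and $\tau=(j_1,j_1+1);\dots;(j_q,j_q+1)$. Concatenating them gives
\begin{equation*}
\sigma;\tau=(i_1,i_1+1);\dots;(i_r,i_r+1);(j_1,j_1+1);\dots;(j_q,j_q+1),
\end{equation*}
which is a decomposition of $\sigma;\tau$ into adjacent transpositions.

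Next I unfold the definition of $\overline{\sigma;\tau}_{A_1,\dots,A_n}$ along this concatenated decomposition. Its first $r$ factors are, verbatim, the $r$ factors of $\overline{\sigma}_{A_1,\dots,A_n}$, because the auxiliary permutations $\tau_k$ and objects $A^k_s$ for $k\le r$ depend only on the first $k$ transpositions.

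For the remaining $q$ factors, with $k=r+l$, the auxiliary permutation is
\begin{equation*}
\tau_{r+l-1}=\big((j_1,j_1+1);\dots;(j_{l-1},j_{l-1}+1)\big)^{-1}\circ \sigma^{-1},
\end{equation*}
using the paper's composition convention. Hence $A^{r+l}_s=A_{\sigma^{-1}(\tau'_{l-1}(s))}$, where $\tau'_{l-1}$ is the auxiliary permutation attached to $\tau$. These are exactly the objects appearing in the definition of $\overline{\tau}$ evaluated at $B_s:=A_{\sigma^{-1}(s)}$. So the last $q$ factors compose to $\overline{\tau}_{A_{\sigma^{-1}(1)},\dots,A_{\sigma^{-1}(n)}}$, and the two natural transformations coincide. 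Independence of the decomposition then makes the result independent of the choices made.

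The edge cases are handled separately. When $n\le 1$ both sides are identities. When $r=0$ or $q=0$ the statement reduces to the fact that the identity permutation gives the identity morphism.

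The only real obstacle is the index bookkeeping: verifying that $\tau_{r+l-1}$ factors as above and that the convention "$;$" gives $(\alpha;\beta)^{-1}=\beta^{-1};\alpha^{-1}$ in the right order, so that the object indices come out as $\sigma^{-1}(\tau^{-1}(s))$. Alternatively, as the paper suggests, one can simply observe that both sides are morphisms of $\mathsf{Pit}_\otimes(\mathsf{C})$ with the same underlying permutation, domain and codomain, and invoke Mac Lane's coherence theorem directly.
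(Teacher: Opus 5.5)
Your proof is correct, but it takes a different route from the paper's.

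\textbf{The paper's route.} The paper proves this lemma, together with the two lemmas after it, in one sentence: both sides are morphisms of $\mathsf{Pit}_\otimes(\mathsf{C})$ with the same domain and codomain, so Mac Lane's coherence theorem forces them to be equal.

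\textbf{Your route.} You unfold the definition of $\overline{\sigma;\tau}_{A_1,\dots,A_n}$ along the concatenated decomposition of $\sigma;\tau$. You observe that the list of factors splits verbatim into those of $\overline{\sigma}_{A_1,\dots,A_n}$ followed by those of $\overline{\tau}_{A_{\sigma^{-1}(1)},\dots,A_{\sigma^{-1}(n)}}$. Coherence then enters only once, through the independence-of-decomposition statement already recorded in the paper's conventions.

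\textbf{What each buys.} The paper's argument is shorter and treats all three coherence lemmas uniformly. Yours shows that this lemma is really just the definition of $\overline{\sigma}$ plus its well-definedness, so compatibility with composition comes for free. It also makes explicit why the target is indexed by $\sigma^{-1}(\tau^{-1}(s))$. Both arguments ultimately rest on coherence.

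\textbf{A small slip.} With the paper's convention $\sigma;\rho=\rho\circ\sigma$, the auxiliary permutation at step $r+l$ is $\big(\sigma;(j_1,j_1+1);\dots;(j_{l-1},j_{l-1}+1)\big)^{-1}=\sigma^{-1}\circ\tau'_{l-1}$, equivalently $\tau'_{l-1};\sigma^{-1}$. It is not $\tau'_{l-1}\circ\sigma^{-1}$ as you displayed it. Your next line, $A^{r+l}_s=A_{\sigma^{-1}(\tau'_{l-1}(s))}$, uses the correct composite, so nothing downstream is affected.

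\textbf{Edge cases.} The cases $r=0$ and $q=0$ need no separate treatment: the concatenation argument already covers them, since an empty composite is the identity. Only $n\le 1$ needs a separate check, because the definition of $\overline{\sigma}$ is stated separately there.
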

\begin{lemma} \label{lemma-MAC1}
Let $(\mathsf{C},\otimes,I)$ be an additive symmetric strict monoidal category. We have the following equality of natural transformations:
\begin{equation*}
(\overline{\gamma_{n,p}})_{A_1,\dots,A_n} =\gamma_{A_1\otimes \dots \otimes A_n,A_{n+1} \otimes \dots \otimes A_{n+p}}.
\end{equation*}
\end{lemma}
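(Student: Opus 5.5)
The plan is to reduce the statement to Mac Lane's coherence theorem for symmetric monoidal categories \cite{COHERENCE}, in the same way as announced for \cref{comp-bar}. Since we work as if $(\mathsf{C},\otimes,I)$ were strict, both sides of the claimed equality are natural transformations from the functor $(A_1,\dots,A_{n+p}) \mapsto A_1 \otimes \dots \otimes A_{n+p}$ to the functor $(A_1,\dots,A_{n+p}) \mapsto A_{n+1} \otimes \dots \otimes A_{n+p} \otimes A_1 \otimes \dots \otimes A_n$. We must first check that the codomains agree. By the definition of $\gamma_{n,p}$ in $\mathsf{S}_+$ we have $\gamma_{n,p}^{-1}(k)=k+n$ for $1 \le k \le p$ and $\gamma_{n,p}^{-1}(k)=k-p$ for $p+1 \le k \le n+p$. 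Hence the codomain $A_{\gamma_{n,p}^{-1}(1)} \otimes \dots \otimes A_{\gamma_{n,p}^{-1}(n+p)}$ of $\overline{\gamma_{n,p}}$ is exactly $A_{n+1} \otimes \dots \otimes A_{n+p} \otimes A_1 \otimes \dots \otimes A_n$. (The subscript in the statement should read $A_1,\dots,A_{n+p}$.)

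Next, I observe that both transformations are built from identities, instances of $\gamma$, $\otimes$ and composition only. Indeed, $\overline{\gamma_{n,p}}$ is by definition a composite of maps $1 \otimes \gamma_{A,B} \otimes 1$ along a decomposition of $\gamma_{n,p}$ into adjacent transpositions, and the right-hand side is a single symmetry instance. In the terminology of \cite{COHERENCE}, both are therefore morphisms of $\mathsf{Pit}_\otimes(\mathsf{C})$ with the same domain and codomain. Moreover, their underlying permutations coincide: the underlying permutation of $\overline{\sigma}$ is $\sigma$, by \cref{lemma-bar} read in $\mathsf{S}_+$, and the underlying permutation of $\gamma_{A_1\otimes\dots\otimes A_n,\,A_{n+1}\otimes\dots\otimes A_{n+p}}$ is the block swap $\gamma_{n,p}$. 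The coherence theorem then gives equality.

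If a more hands-on argument is preferred, I would proceed by induction on $n$ and $p$ using \cref{comp-bar}. First, $\gamma_{1,1}$ is the transposition $(1,2)$, so $\overline{\gamma_{1,1}}=\gamma_{A_1,A_2}$ by definition. Second, the hexagon axiom $\gamma_{A,B\otimes C}=\gamma_{A,B}\otimes 1;1\otimes\gamma_{A,C}$ corresponds to the factorization $\gamma_{1,p+1}=(\gamma_{1,p}\otimes 1_1);(1_p\otimes\gamma_{1,1})$ in $\mathsf{S}_+$. Combining these with \cref{comp-bar} and the evident compatibility $\overline{\sigma\otimes\tau}=\overline{\sigma}\otimes\overline{\tau}$ closes the induction in $p$; the induction in $n$ uses the other hexagon in the same way. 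I expect the main obstacle to be this bookkeeping, namely matching the permutation factorization with the indexing conventions for $\overline{\sigma}$, where $\overline{\sigma}$ sends position $k$ to position $\sigma(k)$. This is precisely why I would rely on coherence directly rather than carry the induction out in detail.
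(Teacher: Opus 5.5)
Your proposal is correct and matches the paper's proof. The paper simply observes that both sides are morphisms of $\mathsf{Pit}_\otimes(\mathsf{C})$ with the same domain and codomain, and then invokes Mac Lane's coherence theorem; your explicit codomain computation via $\gamma_{n,p}^{-1}$ and your correction of the subscript to $A_1,\dots,A_{n+p}$ are accurate and make that observation precise.
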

\begin{lemma} \label{lemma-MAC2}
Let $(\mathsf{C},\otimes,I)$ be an additive symmetric strict monoidal category. We have the following equality of natural transformations:
$(\overline{\sigma \otimes \tau})_{A_1,\dots,A_{n+p}}=\overline{\sigma}_{A_1,\dots,A_n} \otimes \overline{\tau}_{A_{n+1},\dots,A_{n+p}}$.
\end{lemma}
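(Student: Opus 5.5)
The statement to prove is \cref{lemma-MAC2}: $(\overline{\sigma \otimes \tau})_{A_1,\dots,A_{n+p}}=\overline{\sigma}_{A_1,\dots,A_n} \otimes \overline{\tau}_{A_{n+1},\dots,A_{n+p}}$. I would argue directly from the definition of $\overline{\cdot}$ via decompositions into adjacent transpositions. Since, by Mac Lane's coherence theorem, $\overline{\cdot}$ does not depend on the chosen decomposition, it suffices to exhibit one decomposition of $\sigma\otimes\tau$ for which the defining composite is visibly $\overline{\sigma}\otimes\overline{\tau}$.

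First, choose decompositions
\[
\sigma=(i_1,i_1+1);\dots;(i_r,i_r+1),\qquad \tau=(j_1,j_1+1);\dots;(j_q,j_q+1),
\]
with $i_k\in\{1,\dots,n-1\}$ and $j_l\in\{1,\dots,p-1\}$. The operation $-\otimes-$ on permutations (as in $\mathsf{S}_+$) satisfies $(\sigma;\sigma')\otimes 1=(\sigma\otimes 1);(\sigma'\otimes 1)$, and similarly on the right. It also satisfies $(i,i+1)\otimes 1_{S_p}=(i,i+1)\in S_{n+p}$ and $1_{S_n}\otimes(j,j+1)=(n+j,n+j+1)$. Hence
\[
\sigma\otimes\tau=(i_1,i_1+1);\dots;(i_r,i_r+1);(n+j_1,n+j_1+1);\dots;(n+j_q,n+j_q+1)
\]
is a decomposition into adjacent transpositions; these are simple pointwise checks. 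The small cases $n\le 1$ or $p\le 1$, where the relevant factor is an identity, are covered by the same formula with $r=0$ or $q=0$.

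Next, use this decomposition to compute $\overline{\sigma\otimes\tau}_{A_1,\dots,A_{n+p}}$ from the definition. The first $r$ factors only permute the first $n$ positions. The intermediate objects $A^k_s$ for $s>n$ stay equal to $A_s$, and for $s\le n$ they agree with those in the computation of $\overline{\sigma}_{A_1,\dots,A_n}$. So each of these factors is the corresponding factor of $\overline{\sigma}$ tensored with $1_{A_{n+1}\otimes\dots\otimes A_{n+p}}$. The last $q$ factors only act on positions $n+1,\dots,n+p$, with the first $n$ objects already equal to $A_{\sigma^{-1}(1)},\dots,A_{\sigma^{-1}(n)}$. So each is $1_{A_{\sigma^{-1}(1)}\otimes\dots\otimes A_{\sigma^{-1}(n)}}$ tensored with the corresponding factor of $\overline{\tau}_{A_{n+1},\dots,A_{n+p}}$. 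Composing, we get
\[
(\overline{\sigma}\otimes 1);(1\otimes\overline{\tau})=\overline{\sigma}\otimes\overline{\tau}
\]
by functoriality (interchange) of $\otimes$, working strictly.

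The only delicate point is the index bookkeeping. Specifically, one must verify that the intermediate object lists $A^k_s$ in the combined decomposition restrict correctly to those of the separate decompositions. This amounts to checking that $\tau_k$ for the combined word, restricted to the relevant block, equals the corresponding $\tau_k$ for $\sigma$ (respectively for $\tau$, shifted by $n$), which is immediate since each block's transpositions fix the other block.
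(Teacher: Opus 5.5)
Your proof is correct, but it takes a different route from the paper's.

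\textbf{The paper's route.} The paper proves this lemma, together with \cref{comp-bar} and \cref{lemma-MAC1}, by a single appeal to Mac Lane's coherence theorem. Both sides are morphisms of $\mathsf{Pit}_\otimes(\mathsf{C})$ built from identities and symmetries. They have the same domain $A_1\otimes\dots\otimes A_{n+p}$ and the same codomain, so they are equal.

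\textbf{Your route.} You work from the explicit definition of $\overline{\sigma}$. You observe that $\sigma\otimes\tau=(\sigma\otimes 1);(1\otimes\tau)$, so concatenating an adjacent-transposition word for $\sigma$ with one for $\tau$ (shifted by $n$) gives a word for $\sigma\otimes\tau$. You then check that the intermediate object lists $A^k_s$ split blockwise. The first $r$ factors become (factor of $\overline{\sigma}$)$\otimes 1$, and the last $q$ become $1\otimes$(factor of $\overline{\tau}$). Bifunctoriality of $\otimes$ finishes the argument. This bookkeeping is right, including the degenerate cases $n\le 1$ or $p\le 1$, where the corresponding word is empty.

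\textbf{Comparison.} Your argument uses coherence only through the independence of $\overline{\sigma}$ from the chosen decomposition, which the paper already assumes in its conventions. It is therefore more elementary and self-contained relative to the definition. It also makes explicit the permutation bookkeeping that the paper leaves implicit in the claim that the domains and codomains agree. That claim relies on the $A_i$ being treated as distinct variables, so that domain and codomain determine the underlying permutation. The paper's route is shorter and handles all three coherence lemmas uniformly, at the cost of leaving that check to the reader.
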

\begin{proposition} \label{prop:strict-fun}
Let $(\mathsf{C},\otimes,I)$ be an additive symmetric strict monoidal category and let $A \in \mathsf{C}$. There exists a unique additive symmetric 
strict monoidal functor $F$ from $(\mathsf{S}_+,\otimes,0)$ to $(\mathsf{C},\otimes,I)$ such that $F(1)=A$. We have $F(n)=A^{\otimes n}$ for every 
$n \ge 0$ and $F(\sigma_1+\dots+\sigma_r)=(\overline{\sigma_1})_{A,\dots,A}+\dots+(\overline{\sigma_r})_{A,\dots,A}$ for every $\sigma_1+\dots+\sigma_r \in S(n,n)$.
\end{proposition}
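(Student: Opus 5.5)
We construct $F$ directly and check the axioms, using the coherence lemmas already established. On objects we put $F(n):=A^{\otimes n}$; since $\mathsf{C}$ is strict, $A^{\otimes 0}=I$ and $A^{\otimes(n+p)}=A^{\otimes n}\otimes A^{\otimes p}$ on the nose. On morphisms we put $F(\sigma_1+\dots+\sigma_r):=\sum_i(\overline{\sigma_i})_{A,\dots,A}$, with $F(0)=0$. This is well defined because $\mathsf{S}_+(n,n)=\mathbb{N}[S_n]$ is the free commutative monoid on $S_n$. Additivity then holds by construction.

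\textbf{Functoriality.} By \cref{comp-bar} with all $A_i=A$, we have $\overline{\sigma};\overline{\tau}=\overline{\sigma;\tau}$. The identity permutation is sent to the empty composite of transpositions, which is $1_{A^{\otimes n}}$. Both the composite in $\mathsf{S}_+$ and the composite in $\mathsf{C}$ are bilinear (\cref{add-3,add-4}), so composition of formal sums is preserved.

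\textbf{Strict monoidality.} For generators, \cref{lemma-MAC2} gives $\overline{\sigma\otimes\tau}=\overline{\sigma}\otimes\overline{\tau}$. The tensor product is bilinear on both sides (\cref{add-5,add-6,add-7,add-8}), so this extends to sums. The unit is preserved since $F(0)=I$. The symmetry is preserved by \cref{lemma-MAC1}: $F(\gamma_{n,p})=\overline{\gamma_{n,p}}=\gamma_{A^{\otimes n},A^{\otimes p}}=\gamma_{F(n),F(p)}$.

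\textbf{Uniqueness.} Suppose $G$ is another such functor with $G(1)=A$. Strict monoidality forces $G(n)=A^{\otimes n}$ and $G(\gamma_{1,1})=\gamma_{A,A}$. By \cref{lemma-bar}, each $\sigma$ equals $\overline{\sigma}_{1,\dots,1}$ computed in $\mathsf{S}_+$. Applying \cref{preserve-sigma} to $G$ then gives $G(\sigma)=\overline{\sigma}_{A,\dots,A}$. Additivity extends this to all formal sums, so $G=F$.

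The only real subtlety is well-definedness. It reduces to the coherence statement, already recorded in the notations, that $\overline{\sigma}$ does not depend on the chosen decomposition of $\sigma$ into transpositions.
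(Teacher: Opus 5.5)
Your proof is correct and follows the paper's argument: the same definition of $F$, functoriality from \cref{comp-bar}, strict monoidality from \cref{lemma-MAC1} and \cref{lemma-MAC2}, and uniqueness from \cref{lemma-bar} together with \cref{preserve-sigma}. The differences are cosmetic. The paper proves uniqueness before existence, and you spell out the well-definedness of $F$ and the bilinear extension from single permutations to formal sums, both of which the paper leaves implicit.
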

\begin{proof}
Let $F$ be an additive symmetric strict monoidal functor from $(\mathsf{S}_+,\otimes,0)$ to an additive symmetric strict monoidal category $(\mathsf{C},\otimes,I)$ 
such that $F(1)=A$ for some $A \in \mathsf{C}$. Then we have $F(n)=F(1+\dots+1)=A^{\otimes n}$. Moreover, for every $n \ge 0$, if $\sigma_1+\dots+\sigma_r \in S_+(n,n)$, 
we have
\begin{align*}
F(\sigma_1+\dots+\sigma_r)=&~F(\sigma_1)+\dots+F(\sigma_r) &\text{\scriptsize{(\text{add. of }F)}} \\
=&~F((\overline{\sigma_1})_{1,\dots,1})+\dots+F((\overline{\sigma_r})_{1,\dots,1}) &\text{\scriptsize{(\cref{lemma-bar})}} \\
=&~(\overline{\sigma_1})_{A,\dots,A}+\dots+(\overline{\sigma_r})_{A,\dots,A}. &\text{\scriptsize{\cref{preserve-sigma}}} \\
\end{align*}
We have proven that if there exists an additive symmetric strict monoidal functor $F$ from $(\mathsf{S}_+,\otimes,0)$ to $(\mathsf{C},\otimes,I)$ such that $F(1)=A$, 
then it is the unique such functor. We will now prove that there exists such a functor.

We define a functor $F$ from $(\mathsf{S}_+,\otimes,0)$ to $(\mathsf{C},\otimes,I)$ by the formulas $F(n)=A^{\otimes n}$ and 
$F(\sigma_1+\dots+\sigma_r)=(\overline{\sigma_1})_{A,\dots,A}+\dots+(\overline{\sigma_r})_{A,\dots,A}$ for every $\sigma_1+\dots+\sigma_r \in S(n,n)$. 
We must first prove that $F$ is indeed a functor. We have $F(1_{S_n})=(\overline{1_{S_n}})_{A,\dots,A}$, but by definition $(\overline{1_{S_n}})_{A,\dots,A}=1_{A^{\otimes n}}$. 
It shows that identities are preserved. Let $\sigma,\tau \in S(n,n)$. To prove that $F(\sigma;\tau)=F(\sigma);F(\tau)$, we must prove that 
$(\overline{\sigma;\tau})_{A,\dots,A}=(\overline{\sigma})_{A,\dots,A};(\overline{\tau})_{A,\dots,A}$. But this is a consequence of \cref{comp-bar} by 
choosing $A_1=\dots=A_n:=A$. We conclude that $F$ is a functor.

The additivity of $F$ follows immediately from the definition of $F$ on morphisms. It only remains to be proven that $F$ is symmetric monoidal. 
The definition of $F$ of objects immediately gives that $F(0)=I$. The identity $F(\gamma_{n,p})=\gamma_{A^{\otimes n},A^{\otimes p}}$, that is, 
$(\overline{\gamma_{n,p}})_{A,\dots,A}=\gamma_{A^{\otimes n},A^{\otimes p}}$ follows from \cref{lemma-MAC1}. The identity 
$F(\sigma \otimes \tau)=F(\sigma) \otimes F(\tau)$, that is, $(\overline{\sigma \otimes \tau})_{A,\dots,A}=\overline{\sigma}_{A,\dots,A} \otimes \overline{\tau}_{A,\dots,A}$ 
follows from \cref{lemma-MAC2}.
\end{proof}
The identity in $(\mathsf{S}_+,\otimes,0)$ in the following lemma will be transferred to an arbitrary additive symmetric monoidal category in \cref{prop-unsh}.
\begin{lemma}
For all $n \ge 1$ and $0 \le k \le n-1$, the following identity holds in $(\mathsf{S}_+,\otimes,0)$:
\begin{equation} \label{unsh-id}
\mathsf{unsh}(n+1,k+1)_1=\mathsf{unsh}(n,k+1)_1 \otimes 1_1+\mathsf{unsh}(n,k)_1 \otimes 1_1;1_{k} \otimes \gamma_{n-k,1} \colon n+1 \rightarrow n+1.
\end{equation}
\end{lemma}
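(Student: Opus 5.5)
We prove \cref{unsh-id} by evaluating both sides in $\mathbb{N}[S_{n+1}]$ and exhibiting a bijection of index sets. By \cref{lemma-bar}, $\mathsf{unsh}(m,j)_1=\sum_{\sigma \in \mathsf{Unsh}(m,j)}\sigma$, and tensoring with $1_1$ sends $\sigma \in S_n$ to $\sigma \otimes 1 \in S_{n+1}$, which fixes $n+1$. The second summand is $\sum_{\sigma \in \mathsf{Unsh}(n,k)}(\sigma \otimes 1);\,(1_k \otimes \gamma_{n-k,1})$, a sum of permutations. Hence both sides are formal sums of elements of $S_{n+1}$ with coefficients $0$ or $1$. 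It suffices to show that $\mathsf{Unsh}(n+1,k+1)$ is the disjoint union of the two sets
\begin{equation*}
X=\{\sigma\otimes 1 \mid \sigma \in \mathsf{Unsh}(n,k+1)\},\qquad Y=\{(\sigma\otimes 1);(1_k\otimes\gamma_{n-k,1}) \mid \sigma \in \mathsf{Unsh}(n,k)\},
\end{equation*}
where $X$ and $Y$ are disjoint and each parametrisation is injective. Injectivity is immediate: $\sigma \mapsto \sigma\otimes 1$ is injective, and composing with a fixed permutation is injective.

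Let $\tau \in \mathsf{Unsh}(n+1,k+1)$. The condition on $\tau^{-1}$ says that the positions $\tau^{-1}(1)<\dots<\tau^{-1}(k+1)$ increase, and the positions $\tau^{-1}(k+2)<\dots<\tau^{-1}(n+1)$ increase. The last position $n+1$ is therefore the maximum of one of these chains: either $\tau^{-1}(n+1)=n+1$ or $\tau^{-1}(k+1)=n+1$. These cases are exclusive because $k+1 \neq n+1$, which holds since $k \le n-1$.

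\emph{First case.} If $\tau(n+1)=n+1$, then $\tau=\sigma \otimes 1$ for a unique $\sigma \in S_n$. The two monotonicity chains of $\tau$ restrict to those of $\sigma$, which makes $\sigma$ a $(k+1,n-k-1)$-unshuffle. Conversely, every such $\sigma \otimes 1$ is a $(k+1,n-k)$-unshuffle. So this case gives exactly $X$.

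\emph{Second case.} If $\tau(n+1)=k+1$, I would compute $c:=1_k\otimes\gamma_{n-k,1}$ explicitly. It fixes $1,\dots,k$, sends $j \mapsto j+1$ for $k<j\le n$, and sends $n+1 \mapsto k+1$. Writing $\tau=\rho;c$, the convention for composition must be pinned down carefully, since the paper writes $\sigma;\tau$ for $\tau\circ\sigma$ in the permutation rig as well. Then $\rho=c^{-1}\circ\tau$ fixes $n+1$, so $\rho=\sigma\otimes 1$ for some $\sigma \in S_n$. One checks that $\sigma^{-1}(1)<\dots<\sigma^{-1}(k)$ and $\sigma^{-1}(k+1)<\dots<\sigma^{-1}(n)$ correspond exactly to the chains of $\tau$ with the block value $k+1$ at position $n+1$ removed and the values above it shifted down. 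This shows $\sigma \in \mathsf{Unsh}(n,k)$. The converse computation shows that every element of $Y$ lies in $\mathsf{Unsh}(n+1,k+1)$ and sends $n+1$ to $k+1$. So this case gives exactly $Y$.

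The main obstacle is bookkeeping. One must fix the composition order of ``$;$'' in $\mathbb{N}[S_n]$, the convention for $\sigma\otimes\tau$, and the direction of $\gamma_{n-k,1}$, and then verify that $c$ relabels values as claimed rather than positions. If that direction comes out reversed, the statement still holds with the chain conditions read on $\tau$ instead of $\tau^{-1}$; only the verification in the second case changes. Summing over the disjoint union then gives \cref{unsh-id}.
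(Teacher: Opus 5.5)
Your proof is correct. It uses the same decomposition $\mathsf{Unsh}(n+1,k+1)=X\sqcup Y$ as the paper, but you prove the covering by a different argument.

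The paper checks only the easy directions. It shows that elements of $X$ and $Y$ are $(k+1,n-k)$-unshuffles, that $X\cap Y=\emptyset$ (via $\sigma^{-1}(n+1)$), and that both parametrisations are injective. It then concludes equality by counting, $|X|+|Y|=\binom{n}{k+1}+\binom{n}{k}=\binom{n+1}{k+1}=|\mathsf{Unsh}(n+1,k+1)|$.

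You instead prove surjectivity directly. Position $n+1$ must be the top of one of the two chains of $\tau^{-1}$, so $\tau(n+1)\in\{k+1,n+1\}$. In each case you recover the preimage explicitly, either $\tau$ itself or $c^{-1}\circ\tau$, restricted to $\{1,\dots,n\}$. You then verify the unshuffle conditions by reading off $\tau^{-1}=(\sigma^{-1}\otimes 1)\circ c^{-1}$.

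What each route buys:
\begin{itemize}
\item Your route is self-contained and bijective. It needs no fact about $|\mathsf{Unsh}(m,j)|$, which the paper takes as known, and together with that count it gives Pascal's rule combinatorially.
\item Its price is computing $c$ and $c^{-1}$ and tracking the composition order.
\item The paper's counting argument never has to invert the parametrisation.
\end{itemize}

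Your closing hedge is unnecessary. The paper fixes $a;b=b\circ a$, with composition in $\mathsf{S}_+$ given by the product of permutations, which is exactly the convention you used. Under it, your description of $c$ (fixing $1,\dots,k$, sending $j\mapsto j+1$ for $k<j\le n$, and $n+1\mapsto k+1$) and your second case go through as stated.
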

\begin{proof}
Using \cref{formula-unsh}, we must prove that
\begin{equation*}
\underset{\sigma \in \mathsf{Unsh}(n+1,k+1)}{\sum}\overline{\sigma}_{1,\dots,1}=\Big(\underset{a \in \mathsf{Unsh}(n,k+1)}{\sum}\overline{a}_{1,\dots,1}\Big) 
\otimes 1_1+\Big(\underset{b \in \mathsf{Unsh}(n,k)}{\sum}\overline{b}_{1,\dots,1}\Big) \otimes 1_1;1_{k} \otimes \gamma_{n-k,1},
\end{equation*}
that is, using \cref{lemma-bar}, we must prove that
\begin{equation*}
\underset{\sigma \in \mathsf{Unsh}(n+1,k+1)}{\sum}\sigma=\Big(\underset{a \in \mathsf{Unsh}(n,k+1)}{\sum}a\Big)\otimes 
1_1+\Big(\underset{b \in \mathsf{Unsh}(n,k)}{\sum}b\Big) \otimes 1_1;1_{k} \otimes \gamma_{n-k,1}
\end{equation*}
which is the same as
\begin{equation} \label{eq-sum-to-prove}
\underset{\sigma \in \mathsf{Unsh}(n+1,k+1)}{\sum}\sigma=\underset{a \in \mathsf{Unsh}(n,k+1)}{\sum}a \otimes 1_1+\underset{b \in \mathsf{Unsh}(n,k)}{\sum}b 
\otimes 1_1;1_{k} \otimes \gamma_{n-k,1}.
\end{equation}
We will prove that
\begin{equation} \label{unsh-to-prove}
\mathsf{Unsh}(n+1,k+1)=\{a \otimes 1_1,~a \in \mathsf{Unsh}(n,k+1)\}~\sqcup~\{b \otimes 1_1;1_k \otimes \gamma_{n-k,1},~b \in \mathsf{Unsh}(n,k)\}
\end{equation}
where $\sqcup$ denotes the internal disjoint union. That is, we will prove that the two sets on the RHS are disjoint and that their union is equal to the set on the LHS. 
This equality will imply \cref{eq-sum-to-prove}.

Firstly, note that $a \otimes 1_1 \in \mathsf{Unsh}(n+1,k+1)$ and $(a \otimes 1_1)^{-1}=a^{-1} \otimes 1_1$ for every $a \in \mathsf{Unsh}(n,k+1)$. Indeed, 
$(a^{-1} \otimes 1_1)(s)=a^{-1}(s)<(a^{-1} \otimes 1_1)(t)=a^{-1}(t)$ for all $s,t \in \{1,\dots,k+1\}$ such that $s<t$ and $(a^{-1} \otimes 1_1)(s)=a^{-1}(s)<(a^{-1} 
\otimes 1_1)(t)=a^{-1}(t)$ for all $s,t \in \{k+2,\dots,n\}$ such that $s<t$. If $k=n-1$, there is nothing more to prove. If $k<n-1$, we have 
$(a^{-1} \otimes 1_1)(n)=a^{-1}(n)<(a^{-1} \otimes 1_1)(n+1)=n+1$ since $a^{-1}(n) \in \{1,\dots,n\}$.

Secondly, note that $b \otimes 1_1;1_k \otimes \gamma_{n-k,1} \in \mathsf{Unsh}(n+1,k+1)$ for every $b \in \mathsf{Unsh}(n,k)$. Indeed, 
$(b \otimes 1_1;1_k \otimes \gamma_{n-k,1})^{-1}=1_k \otimes \gamma_{1,n-k};b^{-1} \otimes 1_1$ and $(1_k \otimes \gamma_{1,n-k};b^{-1} \otimes 1_1)(s)=b^{-1}(s)<(1_k 
\otimes \gamma_{1,n-k};b^{-1} \otimes 1_1)(t)=b^{-1}(t)$ for all $s,t \in \{1,\dots,k\}$ such that $s<t$. 
Moreover, $(1_k \otimes \gamma_{1,n-k};b^{-1} \otimes 1_1)(k)=b^{-1}(k)<(1_k \otimes \gamma_{1,n-k};b^{-1} \otimes 1_1)(k+1)=n+1$ since $b^{-1}(k) \in \{1,\dots,n\}$. 
We also have $(1_k \otimes \gamma_{1,n-k};b^{-1} \otimes 1_1)(s)=b^{-1}(s-1)<(1_k \otimes \gamma_{1,n-k};b^{-1} \otimes 1_1)(t)=b^{-1}(t-1)$ for all $s,t \in \{k+2,\dots,n+1\}$ 
such that $s<t$.
 
Thirdly, note that the two sets on the RHS of \cref{unsh-to-prove} are disjoint. Indeed, if $\sigma$ is in the first set on the RHS, then $\sigma^{-1}(n+1)=n+1$ 
whereas if $\sigma$ is in the second set on the RHS, then $\sigma^{-1}(n+1) \in \{1,\dots,n\}$.

Fourthly, we have $|\mathsf{Unsh}(n,k+1)|=\binom{n}{k+1}$ and $a \otimes 1_1=a' \otimes 1_1$ iff $a=a'$ for all $a,a' \in \mathsf{Unsh}(n,k+1)$. Thus the 
cardinal of the first set on the RHS is $\binom{n}{k+1}$.

Fifthly, we have $|\mathsf{Unsh}(n,k)|=\binom{n}{k}$ and $b \otimes 1_1;1_k \otimes \gamma_{n-k,1}=b' \otimes 1_1;1_k \otimes \gamma_{n-k,1}$ iff $b \otimes 1=b' 
\otimes 1_1$ iff $b=b'$ for all $b,b' \in \mathsf{Unsh}(n,k)$. Thus the cardinal of the second set on the RHS is $\binom{n}{k}$.

From these five points, it follows that we have a disjoint union on the RHS of \cref{unsh-to-prove}, which is included in $\mathsf{Unsh}(n+1,k+1)$ and whose cardinal 
is $\binom{n}{k+1}+\binom{n}{k}=\binom{n+1}{k+1}$. Since $|\mathsf{Unsh}(n+1,k+1)|=\binom{n+1}{k+1}$, it follows that \cref{unsh-to-prove} holds.
\end{proof}
We will use the following in the proof of \cref{id3}.
\begin{proposition} \label{prop-unsh}
Let $(\mathsf{C},\otimes,I)$ be an additive symmetric monoidal category. For all $n \ge 1$ and $0 \le k \le n-1$, the following equality of natural transformations holds:
\begin{equation} \label{ind-lemma} 
\mathsf{unsh}(n+1,k+1)_A=\mathsf{unsh}(n,k+1)_A \otimes 1_A+\mathsf{unsh}(n,k)_A \otimes 1_A;1_{A^{\otimes k}} \otimes \gamma_{A^{\otimes(n-k)},A}\colon A^{\otimes (n+1)} 
\rightarrow A^{\otimes(n+1)}.
\end{equation}
\end{proposition}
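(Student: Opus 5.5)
The plan is to transfer the identity \cref{unsh-id}, already established in $(\mathsf{S}_+,\otimes,0)$, to $(\mathsf{C},\otimes,I)$ along the functor of \cref{prop:strict-fun}. Following the paper's conventions, we treat $(\mathsf{C},\otimes,I)$ as strict. Fix $A \in \mathsf{C}$ and let $F\colon \mathsf{S}_+ \rightarrow \mathsf{C}$ be the unique additive symmetric strict monoidal functor with $F(1)=A$. Then $F(n)=A^{\otimes n}$ and $F(\sigma_1+\dots+\sigma_r)=\sum_i(\overline{\sigma_i})_{A,\dots,A}$.

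The first step is to compute $F$ on each term of \cref{unsh-id}.
\begin{itemize}
\item By \cref{formula-unsh} and \cref{lemma-bar}, $\mathsf{unsh}(m,j)_1=\sum_{\sigma \in \mathsf{Unsh}(m,j)}\sigma \in \mathsf{S}_+(m,m)$. The description of $F$ on morphisms then gives $F(\mathsf{unsh}(m,j)_1)=\sum_{\sigma}\overline{\sigma}_{A,\dots,A}=\mathsf{unsh}(m,j)_A$.
\item Since $F$ is strict monoidal, $F(1_1)=1_A$ and $F(1_k)=1_{A^{\otimes k}}$.
\item Also $F(\gamma_{n-k,1})=\gamma_{F(n-k),F(1)}=\gamma_{A^{\otimes(n-k)},A}$, using the symmetry clause of \cref{def-mon-fun}; this is where \cref{lemma-MAC1} enters, inside \cref{prop:strict-fun}.
\end{itemize}

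The second step is to apply $F$ to both sides of \cref{unsh-id}. Additivity of $F$ distributes it over the sum, functoriality handles the composite $;$, and strict monoidality gives $F(x\otimes y)=F(x)\otimes F(y)$. Substituting the values from the first step yields exactly \cref{ind-lemma}. Naturality in $A$ needs no separate argument: $\mathsf{unsh}$ is a sum of the natural isomorphisms $\overline{\sigma}$, so both sides are sums of composites of natural transformations. The identity therefore holds componentwise for every $A$, which is what an equality of natural transformations means.

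I do not expect a real obstacle, since all the combinatorial work is already in \cref{unsh-id}. The only point needing care is bookkeeping: checking that $F$ commutes with tensoring by identities and with sums of tensors. This is immediate from the additive clauses \cref{add-7,add-8} together with the additivity and strict monoidality of $F$. The reduction from a general symmetric monoidal category to the strict case is handled by the paper's blanket convention of ignoring coherence isomorphisms.
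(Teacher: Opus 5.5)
Your proposal is correct and follows essentially the same route as the paper. The paper likewise fixes $A$, takes the unique additive symmetric strict monoidal functor $F\colon\mathsf{S}_+\to\mathsf{C}$ with $F(1)=A$ from \cref{prop:strict-fun}, and applies it to \cref{unsh-id}; it then identifies $F(\mathsf{unsh}(a,b)_1)$ with $\mathsf{unsh}(a,b)_A$ using \cref{lemma-bar} and the formula for $F$ on morphisms, exactly as you do.
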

\begin{proof}
For simplicity, we suppose that $(\mathsf{C},\otimes,I)$ is a strict monoidal category. Let $A \in \mathsf{C}$ and let $F\colon\mathsf{S}_+ \rightarrow \mathsf{C}$ be 
the unique additive symmetric strict monoidal functor such that $F(1)=A$ which is given by \cref{prop:strict-fun}. The desired identity will be obtained by applying $F$ 
to \cref{unsh-id}. Since $F$ is an additive symmetric strict monoidal functor, we have 
\begin{equation*}
F(\mathsf{unsh}(n+1,k+1)_1)=F(\mathsf{unsh}(n,k+1)_1) \otimes 1_{F(1)}+F(\mathsf{unsh}(n,k)_1) \otimes 1_{F(1)};1_{F(k)} \otimes \gamma_{F(n-k),F(1)},
\end{equation*}
that is,
\begin{equation*}
F(\mathsf{unsh}(n+1,k+1)_1)=F(\mathsf{unsh}(n,k+1)_1) \otimes 1_A+F(\mathsf{unsh}(n,k)_1) \otimes 1_A;1_{A^{\otimes k}} \otimes \gamma_{A^{\otimes(n-k),A}}.
\end{equation*}
But for all $a,b \in \mathbb{N}$, we have
\begin{align*}
F(\mathsf{unsh}(a,b)_1)&=F\big(\underset{\sigma \in \mathsf{Unsh}(n,k)}{\sum}\overline{\sigma}_{1,\dots,1}\big) & \text{\scriptsize{\cref{formula-unsh}}} \\
&=\underset{\sigma \in \mathsf{Unsh}(n,k)}{\sum}F(\overline{\sigma}_{1,\dots,1}) &\text{\scriptsize{add.~of $F$}} \\
&=\underset{\sigma \in \mathsf{Unsh}(n,k)}{\sum}F(\sigma) &\text{\scriptsize{\cref{lemma-bar}}} \\
&=\underset{\sigma \in \mathsf{Unsh}(n,k)}{\sum}\overline{\sigma}_{A,\dots,A} &\text{\scriptsize{\cref{prop:strict-fun}}} \\
&=\mathsf{unsh}(n,k)_A. &\text{\scriptsize{\cref{formula-unsh}}}
\end{align*}
We conclude that \cref{ind-lemma} holds.
\end{proof}
\section{Higher-order rules for a differential modality} \label{sec:4}
\begin{proposition} \label{prop:higher-order-identities} 
Let $(\oc,m,u,\Delta,\epsilon,\partial)$ be a differential modality on an additive symmetric monoidal category $(\mathsf{C},\otimes,I)$. The following identities hold:
\begin{enumerate}
\item constant rule: \label{id1}
\begin{equation*}
\partial;\epsilon=0,
\end{equation*}
\item linear rule 2: \label{id2}
\begin{equation*}
\partial^2;u=0,
\end{equation*}
\item higher-order product rule ($n \ge 0$): \label{id3}
\begin{equation*}
\partial^n;\Delta=\underset{0 \le k \le n}{\sum}\Delta \otimes \mathsf{unsh}(n,k)_A;1_{\oc A} \otimes \gamma_{\oc A, A^{\otimes k}} \otimes 1_{A^{\otimes (n-k)}};
\partial^k \otimes \partial^{n-k},
\end{equation*}
\item higher-order product rule 2 ($n \ge 1$): \label{id4}
\begin{equation*}
\partial;\Delta^n=\underset{1 \le i \le n}{\sum}\Delta^n \otimes 1_A;1_{(\oc A)^{\otimes i}} \otimes \gamma_{(\oc A)^{\otimes(n-i)},A};1_{(\oc A)^{\otimes(i-1)}} 
\otimes \partial \otimes 1_{(\oc A)^{\otimes(n-i)}},
\end{equation*}
\item Faà di Bruno rule ($n \ge 0$): \label{id5}
\begin{equation*}
\partial^n;m=\underset{\pi \in \mathcal{H}(n)}{\sum}\Delta^{1+|\pi|} \otimes 1_{A^{\otimes n}};1_{\oc A} \otimes 
\overline{\tau_\pi}_{|\pi|\cdot \oc A,n \cdot A};m \otimes \partial^{|s_1|} \otimes \dots 
\otimes \partial^{|s_{|\pi|}|};\partial^{|\pi|}_{\oc A}
\end{equation*}
where for every $\pi \in \mathcal{H}(n)$, we write $\pi=\{s_1,\dots,s_{|\pi|}\}$ with $\mathrm{max}\,s_1<\dots<\mathrm{max}\,s_{|\pi|}$,
\item higher-order interchange rule ($n \ge 0$):
\begin{equation*}
1_{\oc A} \otimes \overline{\tau}_{A,\dots,A};\partial^n=\partial^n \label{id6}
\end{equation*}
for every $\tau \in S_n$.
\end{enumerate}
\end{proposition}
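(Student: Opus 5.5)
We prove the six identities in the order listed, using only the axioms of a differential modality (\cref{graded-diff-mod} with the trivial rig) and the preliminary results of \cref{sec:3}. Each of the higher-order rules \cref{id3,id4,id5,id6} is proved by induction on $n$, using $\partial^{n+1}=\partial^n\otimes 1_A;\partial$ (\cref{def-d-ind-2}).

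\textbf{Constant rule and linear rule 2.} For \cref{id1} we use the product rule \cref{def-d2} with all indices $0$, followed by $1\otimes\epsilon$. Counitality rewrites the first term as $\partial$ and the second as $\partial;\epsilon$ (placed appropriately), so the identity reads $\partial;\epsilon=\partial+\partial;\epsilon$ after projecting with $\epsilon\otimes\epsilon$. This cancellation trick is not available, since we only have commutative monoids, not groups. So instead we use the chain rule \cref{def-d3} composed with $\epsilon_{\oc A}$: we have $m;\epsilon=\epsilon$ (preservation of counit), and on the right-hand side $\partial;\epsilon$ of $\oc\oc A$ appears after the linear factor $m\otimes\partial$. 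Combining this with $\partial;u=\epsilon\otimes 1$ and the comonad counit law $m;\oc u=1$ should give an identity of the form $X=0$. If that fails, the fallback is the standard argument: apply $\oc(0)$ naturality, using $\partial_A;\oc(0)=(\oc 0\otimes 0);\partial=0$, and the fact that $\epsilon$ is natural, so $\partial;\epsilon=\partial;\oc 0;\epsilon=0$. This fallback is clean and is the one I would write. For \cref{id2}, we write $\partial^2;u$, use the product rule on $\partial;\Delta$ inside the chain rule, or, more simply, use the naturality of $u$ together with the linear rule: $\partial^2;u=(\partial\otimes1);(\epsilon\otimes1)$ after applying $\partial;u=\epsilon\otimes 1$. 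This equals $(\partial;\epsilon)\otimes 1=0$ by \cref{id1}.

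\textbf{Interchange and product rules.} For \cref{id6}, every $\tau$ is a product of adjacent transpositions, and \cref{comp-bar} reduces the claim to adjacent transpositions $(i,i+1)$. Using \cref{d-k-l} we split $\partial^n=\partial^{i-1}\otimes1;\partial^2\otimes 1;\partial^{n-i-1}$, and then the symmetry rule \cref{def-d4} gives the result. For \cref{id3}, we induct on $n$. We apply \cref{def-d-ind-2}, then the product rule to the last $\partial$, and push $\Delta\otimes 1$ back through the induction hypothesis. The two resulting families of terms are regrouped by $k$, and the resulting permutation sums recombine exactly by \cref{prop-unsh}. The symmetric-monoidal bookkeeping is handled by naturality of $\gamma$ and coherence. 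For \cref{id4}, we induct on $n$ using \cref{Delta-rec} and the ordinary product rule, together with \cref{id1}-free counit-free manipulations.

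\textbf{Faà di Bruno.} \cref{id5} is the main obstacle. We induct on $n$. Writing $\partial^{n+1};m=\partial^n\otimes1;\partial;m$ is awkward, so instead we use \cref{reverse-higher}: $\partial^{n+1}=\partial\otimes 1;\partial^n$. We apply the induction hypothesis, obtaining terms $\Delta^{1+|\pi|}$ preceded by $\partial$, and commute $\partial$ through $\Delta^{1+|\pi|}$ using \cref{id4}. This produces $1+|\pi|$ summands. In the summand where $\partial$ lands on the first tensor factor (the one feeding $m$), we apply the chain rule, which produces a new block $\{1\}$ and one more outer derivative $\partial^{|\pi|+1}_{\oc A}$. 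In the summand where $\partial$ lands on the factor feeding $\partial^{|s_{i-1}|}$, we get $\partial^{|s_{i-1}|+1}$, after reordering by \cref{id6}. These summands correspond exactly to $\rho(\pi,1)$ and $\rho(\pi,i)$ of \cref{prop-bij}, with block sizes as recorded in \cref{recorded-facts}. Reindexing the double sum through the bijection $\rho$ gives the sum over $\mathcal{H}(n+1)$. The hard part is verifying that the permutations $\overline{\tau_\pi}$ match after the reindexing. I would do this by computing both sides in $\mathsf{S}_+$ and transferring them via \cref{prop:strict-fun}, using \cref{id6} to absorb any discrepancy in the order of the arguments within each block.
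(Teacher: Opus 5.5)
Your arguments for \cref{id2,id3,id4,id6} follow the paper's proofs step for step. Your outline of the Fa\`a di Bruno rule is also exactly the paper's: \cref{reverse-higher}, the induction hypothesis, \cref{id4}, the chain rule in the summand $i=1$, \cref{reverse-higher} again in the summands $i\ge 2$, then reindexing along $\rho$ with \cref{recorded-facts}.

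For the constant rule your route differs. The paper only cites the literature, whereas your naturality argument is a correct, self-contained proof: $\oc 0;\epsilon=\epsilon$ by naturality of $\epsilon$, and $\partial;\oc 0=(\oc 0\otimes 0);\partial=0$ by naturality of $\partial$ and additivity, hence $\partial;\epsilon=0$. State only this one. The product-rule attempt yields $x=x+x$, which is useless in a commutative monoid, and the chain-rule attempt is unverified.

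The one step that does not work as you describe it is the permutation bookkeeping in the Fa\`a di Bruno rule. \Cref{prop:strict-fun} produces a functor with $F(n)=A^{\otimes n}$, so it only yields symmetries between tensor powers of a \emph{single} object. It therefore cannot express $\overline{\tau_\pi}_{|\pi|\cdot\oc A,n\cdot A}$. Nor can it express the instance $\overline{\tau_\pi}_{(i-2)\cdot\oc A,\,\oc A\otimes A,\,(1+|\pi|-i)\cdot\oc A,\,n\cdot A}$, which appears once you move $\partial$ past $\overline{\tau_\pi}$ by naturality in the summands $i\ge 2$.

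What you need is coherence for arbitrary objects. The paper proves the two required identities (\cref{Mac-2} for $i=1$, \cref{Mac-1} for $i\ge 2$) by observing that both sides are canonical symmetry maps with the same domain and codomain words in letters $A_j$, $B_j$, and then specialising to $A_j:=\oc A$, $B_j:=A$. Equivalently, you could use \cref{comp-bar,lemma-MAC1,lemma-MAC2}, which hold for arbitrary objects, and compare underlying permutations.

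Also, \cref{id6} is not needed at this point. In $\rho(\pi,1)$ the new block is $t_1=\{1\}$, and in $\rho(\pi,i)$ the element $1$ is the least element of $t_{i-1}=(s_{i-1}+1)\cup\{1\}$. So the new argument already sits exactly where $\tau_{\rho(\pi,i)}$ places it. The steps you leave implicit are routine: coassociativity to rewrite $\Delta^{1+|\pi|};\Delta\otimes 1$ as $\Delta^{2+|\pi|}$ in the summand $i=1$, and naturality of $\overline{\tau_\pi}$ in the summands $i\ge 2$.
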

\begin{remark}
In the RHS of \cref{id5}, the formula $\overline{\tau}_{|\pi|\cdot \oc A,n \cdot A}$ is a notation for $\overline{\tau}_{\oc A,\dots,\oc A,A,\dots,A}$ where $\oc A$ is repeated $|\pi|$ times and $A$ is repeated $n$ times.
\end{remark}
\begin{proof}~\\
\begin{enumerate}
\item[\cref{id1}:] See \cite{DIFCATREV}.
\item[\cref{id2}:] We compute:
\begin{align*}
\partial^2;u&=\partial \otimes 1_A;\partial;u& \text{\scriptsize{\cref{def-d-ind-2}}} \\
&=\partial \otimes 1_A;\epsilon \otimes 1_A& \text{\scriptsize{\cref{def-d1}}} \\
&=(\partial;\epsilon) \otimes 1_A& \text{\scriptsize{(functoriality of $-\otimes 1_A$)}} \\
&=0 \otimes 1_A& \text{\scriptsize{\cref{id1}}} \\
&=0.& \text{\scriptsize{\cref{add-5}}} 
\end{align*}
\item[\cref{id3}:] See \cref{proof:higer-order-product}.
\item[\cref{id4}:] See \cref{proof:higer-order-product-2}.
\item[\cref{id5}:] See \cref{proof:faà-di-bruno}.
\item[\cref{id6}:] If $n=0$, then $S_0=\{1_{S_0}\}$, $\overline{1_{S_0}}=1_I$ and we have $1_{\oc A} \otimes \overline{1_{S_0}};\partial^0=1_{\oc A} \otimes 
1_I;1_{\oc A}=1_{\oc A}=\partial^0$. If $n=1$, then $S_0=\{1_{S_1}\}$, $\overline{1_{S_1}}_A=1_A$ and we have $1_{\oc A} \otimes \overline{1_{S_1}}_A;\partial=
1_{\oc A} \otimes 1_A;\partial=\partial$.

Suppose now that $n \ge 2$. According to its definition, for every $\tau \in S_n$, the morphism $\overline{\tau}_{A,\dots,A}$ is a finite composite of morphisms of the form 
\begin{equation}
1_{A^{\otimes k}} \otimes \gamma_{A,A} \otimes 1_{A^{\otimes l}}\colon A^{\otimes (k+l+2)} \rightarrow A^{\otimes (k+l+2)} 
\end{equation}
where $(k,l) \in \mathbb{N}^2$ is such that $k+l+2=n$. Therefore, to prove \cref{id6} for $n \ge 2$, it suffices to prove that
\begin{equation*}
1_{\oc A} \otimes 1_{A^{\otimes k}} \otimes \gamma_{A,A} \otimes 1_{A^{\otimes l}};\partial^{k+l+2}=\partial^{k+l+2}
\end{equation*}
for every $(k,l) \in \mathbb{N}^2$. Let $(k,l) \in \mathbb{N}^2$. We compute:
\begin{align*}
&1_{\oc A} \otimes 1_{A^{\otimes k}} \otimes \gamma_{A,A} \otimes 1_{A^{\otimes l}};\partial^{k+2+l} \\
&=1_{\oc A} \otimes 1_{A^{\otimes k}} \otimes \gamma_{A,A} \otimes 1_{A^{\otimes l}};\partial^{k+2} \otimes 1_{A^{\otimes l}};\partial^l& \text{\scriptsize{\cref{d-k-l}}} \\
&=1_{\oc A \otimes A^{\otimes k}} \otimes \gamma_{A,A} \otimes 1_{A^{\otimes l}};\partial^{k+2} \otimes 1_{A^{\otimes l}};\partial^l& 
\text{\scriptsize{(functoriality of $-\otimes-$)}} \\
&=1_{\oc A \otimes A^{\otimes k}} \otimes \gamma_{A,A} \otimes 1_{A^{\otimes l}};(\partial^k \otimes 1_{A^{\otimes 2}};\partial^2) \otimes 1_{A^{\otimes l}};
\partial^l& \text{\scriptsize{\cref{d-k-l}}} \\
&=1_{\oc A \otimes A^{\otimes k}} \otimes \gamma_{A,A} \otimes 1_{A^{\otimes l}};\partial^k \otimes 1_{A^{\otimes (l+2)}};\partial^2 \otimes 1_{A^{\otimes l}};
\partial^l& \text{\scriptsize{(functoriality of $-\otimes 1_{A^{\otimes l}}$)}}  \\
&=\partial^k \otimes \gamma_{A,A} \otimes 1_{A^{\otimes l}};\partial^2 \otimes 1_{A^{\otimes l}};\partial^l& \text{\scriptsize{(functoriality of $-\otimes-$)}} \\
&=(\partial^k \otimes \gamma_{A,A};\partial^2) \otimes 1_{A^{\otimes l}};\partial^l& \text{\scriptsize{(functoriality of $-\otimes 1_{A^{\otimes l}}$)}} \\
&=(\partial^k \otimes 1_{A^{\otimes 2}};1_{\oc A} \otimes \gamma_{A,A};\partial^2) \otimes 1_{A^{\otimes l}};\partial^l&\text{\scriptsize{(functoriality of $-\otimes-$)}} \\
&=(\partial^k \otimes 1_{A^{\otimes 2}};\partial^2) \otimes 1_{A^{\otimes l}};\partial^l&\text{\scriptsize{\cref{def-d4}}} \\
&=\partial^{k+2} \otimes 1_{A^{\otimes l}};\partial^l &\text{\scriptsize{\cref{d-k-l}}} \\
&=\partial^{k+2+l}. &\text{\scriptsize{\cref{d-k-l}}}
\end{align*}
\end{enumerate}
\end{proof}
\section{Proving the theorem} \label{sec:5}
In this section, we will prove \cref{theorem:extracting}. We therefore assume the hypotheses of this theorem: Let $(\oc,\epsilon,u,\Delta,m,\partial)$ be a 
differential modality on an additive symmetric monoidal category $(\mathsf{C},\otimes,I)$. Suppose that for all object $A$ and $n \in \mathbb{N}$,
\begin{itemize}
\item[1.] $s_n\colon \oc A \rightarrow \oc_{\le n}A$ is a cokernel of $\partial^{n+1}$,
\item[2.] $s_n \otimes 1_A$ is a cokernel of $\partial^{n+1} \otimes 1_A$.
\end{itemize}
\subsection{Defining the data for an $\mathbb{N}$-graded differential modality}
We will first define the functors $\oc_{\le n}$ and the natural transformations $\epsilon^\le,u^\le,\Delta_{n,p}^\le,m_{n,p}6\le, \partial_n^\le$.

\textbf{Definition of $\oc_{ \le n}f$:} Given any $f\colon A \rightarrow B$, we define $\oc_{\le n}f:\oc_{\le n}A \rightarrow \oc_{\le n}B$ by using the following diagram:
\begin{equation*}
\begin{tikzcd}
\oc A \otimes A^{\otimes n} \arrow[rr, "\partial^n", shift left=2] \arrow[rr, "0"', shift right=2] \arrow[d, "\oc f \otimes f^{\otimes n}"'] &  & \oc A 
\arrow[d, "\oc f"] \arrow[rr, "s_n"] &  & \oc_{\le n} A \arrow[d, "\oc_{\le n}f", dashed] \\
\oc B \otimes B^{\otimes n} \arrow[rr, "\partial^n", shift left=2] \arrow[rr, "0"', shift right=2]                                           &  & \oc B 
\arrow[rr, "s_n"']                   &  & \oc_{\le n} B~.                                
\end{tikzcd}\end{equation*}
Some comments are needed: We have $\partial^n;\oc f;s_n=\oc f \otimes f^{\otimes n};\partial^n;s_n$ by \cref{nat-dn}. But $\partial^n;s_n=0;s_n=0$ by 
definition of a cokernel. It follows that $\partial^n;\oc f;s_n=\oc f \otimes f^{\otimes n};0=0=0;\oc f;s_n$. By the universal property of cokernels, 
we deduce that there exists a unique morphism $\oc_{\le n}f$ such that the square on the right side of the diagram commutes. 

\textbf{Naturality of $s_n$:} Note that by the definition of $\oc_{\le n}f$, $s_n$ is automatically a natural transformation. We can thus use the tensor 
product of $(\mathsf{End}(\mathsf{C}),\bullet,1)$ to obtain the natural transformation $s_n \bullet s_p\colon \oc\oc A \rightarrow \oc_{\le n}\oc_{\le p}A$.

\textbf{Definition of $\epsilon^\le$:} We use the universal property of $s_0$:
\begin{equation*}
\begin{tikzcd}
\oc A \otimes A \arrow[rr, "\partial", shift left=2] \arrow[rr, "0"', shift right=2] &  & \oc A \arrow[rr, "s_0"] \arrow[rrd, "\epsilon"'] &  & \oc_{\le 0} A \arrow[d, "\epsilon^\le", dashed] \\
                                                                                     &  &                                                  &  & I~.                                              
\end{tikzcd}
\end{equation*}
Here, we have used \cref{id1}.

\textbf{Definition of $u^{\le}$:} We use the universal property of $s_1$: 
\begin{equation*}
\begin{tikzcd}
\oc A \otimes A^{\otimes 2} \arrow[rr, "\partial^2", shift left=2] \arrow[rr, "0"', shift right=2] &  & \oc A \arrow[rr, "s_1"] \arrow[rrd, "u"'] &  & \oc_{\le 1} A \arrow[d, "u^\le", dashed] \\
                                                                                                   &  &                                           &  & A~.                                     
\end{tikzcd}
\end{equation*}
Here, we have used \cref{id2}.

\textbf{Definition of $\Delta^\le_{n,p}$:} We use the universal property of $s_{n+p}$:
\begin{equation*}
\begin{tikzcd}
\oc A \otimes A^{\otimes (n+p+1)} \arrow[rr, "\partial^{n+p+1}", shift left=2] \arrow[rr, "0"', shift right=2] &  & \oc A \arrow[r, "s_{n+p}"] \arrow[rdd, "\Delta;s_n \otimes s_p"'] & \oc_{\le n+p}A \arrow[dd, "{\Delta_{n,p}^{\le}}", dashed] \\
                                                                                                               &  &                                                                   &                                                           \\
                                                                                                               &  &                                                                   & \oc_{\le n}A \otimes \oc_{\le p}A~.                      
\end{tikzcd}
\end{equation*}
We must prove that $\partial^{n+p+1};\Delta;s_n \otimes s_p=0$. We start by using \cref{id3} which gives
\begin{align*}
&\partial^{n+p+1};\Delta;s_n \otimes s_p \\
&=\Big(\underset{0 \le k \le n+p-1}{\sum}\Delta \otimes \mathsf{unsh}(n+p+1,k)_A;1_{\oc A} \otimes \gamma_{\oc A, A^{\otimes k}} \otimes 
1_{A^{\otimes ((n+p-1)-k)}};\partial^k \otimes \partial^{(n+p+1)-k}\Big);s_n \otimes s_p \\
&=\underset{0 \le k \le n+p-1}{\sum}\Delta \otimes \mathsf{unsh}(n+p+1,k)_A;1_{\oc A} \otimes \gamma_{\oc A, A^{\otimes k}} \otimes 
1_{A^{\otimes (n-k)}};\partial^k \otimes \partial^{(n+p+1)-k};s_n \otimes s_p.
\end{align*}
The desired identity will follow from the fact that $\partial^k \otimes \partial^{(n+p+1)-k};s_n \otimes s_p=0$ for every $0 \le k \le n+p+1$ 
which we will now prove. We first note that $\partial^k \otimes \partial^{(n+p+1)-k};s_n \otimes s_p=(\partial^k;s_n) \otimes (\partial^{(n+p+1)-k};s_p)$. 
We then distinguish two cases and make use of \cref{d-k-l}. 
\begin{itemize}
\item[1.] If $k \le n$: we have $n-k \ge 0$, thus
\begin{equation*}
\partial^{(n+p+1)-k};s_p=\partial^{(n-k)+(p+1)};s_p=\partial^{n-k} \otimes 1_{A^{\otimes(p+1)}};\partial^{p+1};s_p=\partial^{n-k}\otimes 1_{A^{\otimes(p+1)}};0=0.
\end{equation*}
\item[2.] If $k \ge n+1$: we have $k-(n+1) \ge 0$, thus
\begin{equation*}
\partial^k;s_n=\partial^{(k-(n+1))+(n+1)};s_n=\partial^{k-(n+1)} \otimes 1_{A^{\otimes(n+1)}};\partial^{n+1};s_n=\partial^{k-(n+1)} \otimes 1_{A^{\otimes(n+1)}};0=0.
\end{equation*}
\end{itemize}
In any case, $(\partial^k;s_n) \otimes (\partial^{(n+p-1)-k};s_p)=0$.

\textbf{Definition of $m^{\le}_{n,p}$:} We use the universal property of $s_{np}$:
\begin{equation*}
\begin{tikzcd}
\oc A \otimes A^{\otimes (np+1)} \arrow[rr, "\partial^{np+1}", shift left=2] \arrow[rr, "0"', shift right=2] &  & \oc A \arrow[r, "s_{np}"] \arrow[rdd, "m;s_n \bullet s_p"'] & \oc_{\le np}A \arrow[dd, "{m_{n,p}^{\le}}", dashed] \\
                                                                                                             &  &                                                             &                                                     \\
                                                                                                             &  &                                                             & \oc_{\le n}\oc_{\le p}A~.                          
\end{tikzcd}
\end{equation*}
We must prove that $\partial^{np+1};m;s_n \bullet s_p=0$. We start by using \cref{id5} which gives
\begin{align*}
\partial^{np+1};m;s_n \bullet s_p&=\Big(\underset{\pi \in \mathcal{H}(np+1)}{\sum}\Delta^{1+|\pi|} \otimes 1_{A^{\otimes (np+1)}};1_{\oc A} \otimes \overline{\tau_\pi}_{|\pi| \cdot \oc A,(np+1) \cdot A};
m \otimes \partial^{|s_1|} \otimes \dots \otimes \partial^{|s_{|\pi|}|};\partial^{|\pi|}_{\oc A}\Big);s_n \bullet s_p \\
&=\underset{\pi \in \mathcal{H}(np+1)}{\sum}\Delta^{1+|\pi|} \otimes 1_{A^{\otimes (np+1)}};1_{\oc A} \otimes \overline{\tau_\pi}_{|\pi| \cdot \oc A,(np+1) \cdot A};m \otimes \partial^{|s_1|} \otimes 
\dots \otimes \partial^{|s_{|\pi|}|};\partial^{|\pi|}_{\oc A};s_n \bullet s_p.
\end{align*}
The desired identity will follow from the fact that $m \otimes \partial^{|s_1|} \otimes \dots \otimes \partial^{|s_{|\pi|}|};\partial^{|\pi|}_{\oc A};s_n \bullet s_p=0$ 
for every $\pi \in \mathcal{H}(np+1)$ that we will now prove. We distinguish two cases.
\begin{itemize}
\item[1.] If $|\pi| \ge n+1$: We have:
\begin{align*}
\partial^{|\pi|}_{\oc A};s_n \bullet s_p&=\partial^{(|\pi|-(n+1))+(n+1)}_{\oc A};s_n(\oc A);\oc_{\le n}(s_p(A)) &\text{\scriptsize{\cref{bullet-2}}} \\
&=\partial^{|\pi|-(n+1)}_{\oc A} \otimes 1_{(\oc A)^{\otimes (n+1)}};\partial^{n+1}_{\oc A};s_n(\oc A);\oc_{\le n}(s_p(A)) &\text{\scriptsize{\cref{d-k-l}}} \\
&=\partial^{|\pi|-(n+1)}_{\oc A} \otimes 1_{(\oc A)^{\otimes (n+1)}};0;\oc_{\le n}(s_p(A)) &\text{\scriptsize{universal prop.~of $s_n(\oc A)$}} \\
&=0. &\text{\scriptsize{\cref{add-1,add-2}}} 
\end{align*}
\item[2.] If $|\pi| \le n$: Since $\pi$ is a partition of $\{1,\dots,np+1\}$, we have $|s_1|+\dots+|s_{|\pi|}|=np+1$. It follows that for at least one 
$1 \le i \le |\pi|$, we have $|s_i| \ge p+1$. Else, we would have $|s_1|+\dots+|s_{|\pi|}| \le |\pi|p \le np$, a contradiction. Let $1 \le i \le |\pi|$ 
be such an integer. We have:
\begin{align*}
&m \otimes \partial^{|s_1|} \otimes \dots \otimes \partial^{|s_{|\pi|}|};\partial^{|\pi|}_{\oc A};(s_n \bullet s_p)_A \\
&=m \otimes \partial^{|s_1|} \otimes \dots \otimes \partial^{|s_{|\pi|}|};\partial^{|\pi|}_{\oc A};\oc (s_p(A));s_n(\oc_{\le p}A) &\text{\scriptsize{\cref{bullet-1}}}  \\
&=m \otimes \partial^{|s_1|} \otimes \dots \otimes \partial^{|s_{|\pi|}|};\oc (s_p(A)) \otimes (s_p(A))^{\otimes |\pi|};\partial_{\oc_{\le p}A}^{|\pi|};s_n(\oc_{\le p}A) 
&\text{\scriptsize{\cref{nat-dn}}} \\
&=(m;\oc(s_p(A))) \otimes (\partial^{|s_1|};s_p(A)) \otimes \dots \otimes (\partial^{|s_{|\pi|}|};s_p(A));\partial_{\oc_{\le p}A}^{|\pi|};s_n(\oc_{\le p}A). 
&\text{\scriptsize{func.~of~$-\otimes-$}}
\end{align*}
But \cref{d-k-l} and the universal property of $s_p(A)$ give that
\begin{equation*}
\partial^{|s_{i}|};s_p(A)=\partial^{|s_i|-(p+1)} \otimes 1_{A^{\otimes(p+1)}};\partial^{p+1};s_p(A)=\partial^{|s_i|-(p+1)} \otimes 1_{A^{\otimes(p+1)}};0=0.
\end{equation*}
It follows that 
\begin{equation*}
(m;\oc(s_p(A))) \otimes (\partial^{|s_1|};s_p(A)) \otimes \dots \otimes (\partial^{|s_{|\pi|}|};s_p(A));\partial_{\oc_{\le p}A}^{|\pi|};s_n(\oc_{\le p}A)=0.
\end{equation*}
\end{itemize}

\textbf{Definition of $\partial_n^{\le}$:} We use the universal properties of $s_{n+1}$ and $s_n \otimes 1$:
\begin{equation*}
\begin{tikzcd}
\oc A \otimes A^{\otimes (n+2)} \arrow[rr, "\partial^{n+1} \otimes 1", shift left=2] \arrow[rr, "0"', shift right=2] \arrow[rrdd, "\partial^{n+2}", shift left=2] \arrow[rrdd, "0"', shift right=2] &  & \oc A \otimes A \arrow[dd, "\partial"'] \arrow[rr, "s_n\otimes 1"] &  & \oc_{\le n}A \otimes A \arrow[dd, "\partial_n^{\le}", dashed] \\
                                                                                                                                                                                                    &  &                                                                    &  &                                                               \\
                                                                                                                                                                                                    &  & \oc A \arrow[rr, "s_{n+1}"']                                       &  & \oc_{\le n+1} A~.                                            
\end{tikzcd}
\end{equation*}
Some comments are needed: We have $\partial^{n+1} \otimes 1;\partial;s_{n+1}=\partial^{n+2};s_{n+1}=0$ by \cref{def-d-ind-2} and the universal property of $s_{n+1}$. 
It follows from the universal property of $s_n \otimes 1$ that there exists a unique morphism $\partial_n^{\le}$ such that the square on the right side of the diagram commutes.
\subsection{Checking the equations for an $\mathbb{N}$-graded differential modality}
We will now check that $\oc_{\le n}, \epsilon^\le,u^\le,\Delta_{n,p}^\le,m_{n,p}^\le, \partial_n^\le$ satisfy the required identities to constitute an $\mathbb{N}$-graded 
differential modality. There are nineteen such identities.
\begin{itemize}
\item[1.] $\oc_n(f;g)=\oc_n(f);\oc_n(g)$:

The three cells in the following diagram commute by functoriality of $\oc$ and naturality of $s_n$:
\begin{equation*}
\begin{tikzcd}
\oc A \arrow[rr, "s_n"] \arrow[d, "\oc f"] \arrow[dd, "\oc(f;g)"', bend right=49] &  & \oc_{\le n}A \arrow[d, "\oc_{\le n} f"] \\
\oc B \arrow[rr, "s_n"] \arrow[d, "\oc g"]                                        &  & \oc_{\le n}B \arrow[d, "\oc_{\le n} g"] \\
\oc C \arrow[rr, "s_n"]                                                           &  & \oc_{\le n}B~.                     
\end{tikzcd}
\end{equation*}
We thus have $\oc(f;g);s_n=s_n;\oc_{\le n}f;\oc_{\le n}g$. By definition of $\oc_{\le n}(f;g)$, we conclude that $\oc_{\le n}(f;g)=\oc_{\le n}f;\oc_{\le n}g$.

\item[2.] $\oc_{\le n}(1_A)=1_{\oc_{\le n}(A)}$:

The two cells in the following diagram commute---the first one by functoriality of $\oc$:
\begin{equation*}
\begin{tikzcd}
\oc A \arrow[rr, "s_n"] \arrow[d, "\oc(1_A)"', bend right=67] \arrow[d, "1_{\oc A}"] &  & \oc_{\le n}A \arrow[d, "1_{\oc_{\le n}A}"] \\
\oc A \arrow[rr, "s_n"]                                                              &  & \oc_{\le n}A~.                          
\end{tikzcd}
\end{equation*}
It follows by definition of $\oc_{\le n}(1_A)$ that $\oc_{\le n}(1_A)=1_{\oc_{\le n}A}$.
\item[3.] Naturality of $\epsilon^\le$:

In the following diagram, the parallelogram commutes by naturality of $s_0$, the triangle on the left commutes by naturality of $\epsilon$ and the triangle on the right 
commutes by definition of $\epsilon^\le$:
\begin{equation*}
\begin{tikzcd}
\oc A \arrow[rd, "\oc f"'] \arrow[rr, "s_0"] \arrow[rdd, "\epsilon"', bend right] &                                               & \oc_{\le 0}A \arrow[rd, "\oc_{\le 0}f"] &                                          \\
                                                                                  & \oc B \arrow[rr, "s_0"] \arrow[d, "\epsilon"] &                                         & \oc_{\le 0}B \arrow[lld, "\epsilon^\le"] \\
                                                                                  & I~.                                             &                                         &                                         
\end{tikzcd}
\end{equation*}
It follows that $s_0;\oc_{\le 0}f;\epsilon^\le=\epsilon$. By definition of $\epsilon^\le$, we obtain that $\epsilon^\le=\oc_{\le 0}f;\epsilon^\le$.
\item [4.] Naturality of $u^\le$:

In the following (3D) diagram, the upper face commutes by naturality of $s_1$, the two triangles commute by definition of $u^{\le}$ and the left face commutes by naturality of u:
\begin{equation*}
\begin{tikzcd}
\oc A \arrow[rd, "\oc f"] \arrow[rr, "s_1"] \arrow[d, "u"'] &                                        & \oc_{\le 1}A \arrow[rd, "\oc_{\le 1}f"] \arrow[lld, "u^\le", dashed] &                                     \\
A \arrow[rd, "f"']                                          & \oc B \arrow[rr, "s_1"] \arrow[d, "u"] &                                                                      & \oc_{\le 1}B \arrow[lld, "u^{\le}"] \\
                                                            & B~.                                      &                                                                      &                                    
\end{tikzcd}
\end{equation*}
By diagram chasing, we deduce that $s_1;\oc_{\le 1}f;u^\le=s_1;u^\le;f$. Since $s_1$ is an epimorphism as a coequalizer, it follows that $\oc_{\le 1}f;u^\le=u^\le;f$, 
that is, the bottom face commutes.

\item [5.] Naturality of $\Delta_{n,p}^\le$:

In the following (3D) diagram, the upper and bottom faces commute by definition of $\Delta^\le_{n,p}$, the left face commutes by naturality of $s_{n+p}$, the right face 
commutes by naturality of $s_n \otimes s_p$, and the back face commutes by naturality of $\Delta$:
\begin{equation*}
\begin{tikzcd}
\oc A \arrow[d, "\oc f"'] \arrow[rr, "\Delta"] \arrow[rdd, "s_{n+p}"] &                                                                             & \oc A \otimes \oc A \arrow[d, "\oc f \otimes \oc f"', dashed] \arrow[rdd, "s_n \otimes s_p"] &                                                                                    \\
\oc B \arrow[rr, "\Delta"', dashed] \arrow[rdd, "s_{n+p}"']           &                                                                             & \oc B \otimes \oc B \arrow[rdd, "s_n \otimes s_p"', dashed]                                  &                                                                                    \\
                                                                      & \oc_{\le n+p}A \arrow[d, "\oc_{\le n+p}f"] \arrow[rr, "{\Delta_{n,p}^\le}"] &                                                                                              & \oc_{\le n}A \otimes \oc_{\le p}A \arrow[d, "\oc_{\le n} f \otimes \oc_{\le p} f"] \\
                                                                      & \oc_{\le n+p}B \arrow[rr, "{\Delta_{n,p}^\le}"]                             &                                                                                              & \oc_{\le n}B \otimes \oc_{\le p}B~.                                                 
\end{tikzcd}
\end{equation*}
By diagram chasing, we deduce that $s_{n+p};\Delta^\le_{n,p};\oc_{\le n}f \otimes \oc_{\le p}f=s_{n+p};\oc_{\le n+p}f;\Delta_{n,p}^\le$. Since $s_{n+p}$ is an epimorphism, it follows that 
the front face commutes.
\item [6.] Naturality of $m_{n,p}^\le$:

In the following (3D) diagram, the upper and bottom faces commute by definition of $m^\le_{n,p}$, the left face commutes by naturality of $s_{np}$, the right face commutes 
by naturality of $s_n \bullet s_p$ and the back face commutes by naturality of $m$:
\begin{equation*}
\begin{tikzcd}
\oc A \arrow[d, "\oc f"'] \arrow[rr, "m"] \arrow[rdd, "s_{np}"] &                                                                      & \oc\oc A \arrow[d, "\oc\oc f"', dashed] \arrow[rdd, "s_n \bullet s_p"] &                                                              \\
\oc B \arrow[rr, "m"', dashed] \arrow[rdd, "s_{np}"']           &                                                                      & \oc\oc B \arrow[rdd, "s_n \bullet s_p"', dashed]                       &                                                              \\
                                                                & \oc_{\le np}A \arrow[d, "\oc_{\le np}f"] \arrow[rr, "{m_{n,p}^\le}"] &                                                                        & \oc_{\le n}\oc_{\le p}A \arrow[d, "\oc_{\le n}\oc_{\le p}f"] \\
                                                                & \oc_{\le np}B \arrow[rr, "{m_{n,p}^\le}"]                            &                                                                        & \oc_{\le n}\oc_{\le p}B~.                                     
\end{tikzcd}
\end{equation*}
By diagram chasing, we deduce that $s_{np};m^\le_{n,p};\oc_{\le n}\oc_{\le p}f=s_{np};\oc_{\le np}f;m_{n,p}^\le$. Since $s_{np}$ is an epimorphism, it follows that 
the front face commutes.
\item[7.] Naturality of $\partial_n^\le$:
\begin{equation*}
\begin{tikzcd}
\oc A \otimes A \arrow[d, "\oc f \otimes f"', dashed] \arrow[rdd, "s_n \otimes 1"] \arrow[rr, "\partial"] &                                                                                          & \oc A \arrow[d, "\oc f"', dashed] \arrow[rdd, "s_{n+1}"] &                                            \\
\oc B \otimes B \arrow[rdd, "s_n \otimes 1"', dashed] \arrow[rr, "\partial"', dashed]                     &                                                                                          & \oc B \arrow[rdd, "s_{n+1}"', dashed]                    &                                            \\
                                                                                                          & \oc_{\le n} A \otimes A \arrow[d, "\oc_{\le n}f \otimes f"] \arrow[rr, "\partial_n^\le"] &                                                          & \oc_{\le n+1}A \arrow[d, "\oc_{\le n+1}f"] \\
                                                                                                          & \oc_{\le n} B \otimes B \arrow[rr, "\partial_n^\le"']                                    &                                                          & \oc_{\le n+1}B                            
\end{tikzcd}
\end{equation*}
Same reasoning as for $\Delta_{n,p}^\le$ and $m_{n,p}^\le$: The upper and bottom faces commute by definition of $\partial^\le_n$, the right face commutes by 
naturality of $s_{n+1}$, the left face commutes by naturality of $s_n \otimes 1$, and the back face commutes by naturality of $\partial$. By diagram chasing 
we obtain $s_n \otimes 1;\partial^\le_n;\oc_{\le n+1}f=s_n \otimes 1;\oc_{\le n}f \otimes ;\partial^\le_n$. Thus the front face commutes since $s_n \otimes 1$ is an epimorphism as a coequalizer.
\item[8.] First diagram in \cref{def-graded-comonad}:
\begin{equation*}
\begin{tikzcd}
\oc A \arrow[d, "m"'] \arrow[rr, "m"] \arrow[rdd, "s_{npq}"]             &                                                                      & \oc\oc A \arrow[d, "m"', dashed] \arrow[rdd, "s_{np} \bullet s_q"] &                                          \\
\oc\oc A \arrow[rr, "\oc m"', dashed] \arrow[rdd, "s_n \bullet s_{pq}"'] &                                                                      & \oc\oc\oc A \arrow[rdd, "s_n\bullet s_p\bullet s_q"', dashed]      &                                          \\
                                                                         & \oc_{\le npq} A \arrow[d, "{m_{n,pq}^\le}"] \arrow[rr, "{m_{np,q}^\le}"] &                                                                    & \oc_{\le np}\oc_{\le q} \arrow[d, "{m_{n,p}^\le}"] \\
                                                                         & \oc_{\le n}\oc_{\le pq}A \arrow[rr, "{\oc_{\le n}m_{p,q}^\le}"']                     &                                                                    & \oc_{\le n}\oc_{\le p}\oc_{\le q}A                        
\end{tikzcd}
\end{equation*}
The back face commutes by coassociativity of $m$. The upper face commutes by definition of $m_{np,q}^\le$. The left face commutes by definition of $m_{n,pq}^\le$. 
The right face commutes by applying \cref{prop-end-1} to the definition of $m^{\le}_{n,p}$. The bottom face commutes by applying \cref{prop-end-2} to the definition 
of $m_{p,q}^\le$. By diagram chasing we obtain $s_{npq};m_{np,q}^\le;m_{n,p}^\le=s_{npq};m_{n,pq}^\le;\oc_nm_{p,q}^\le$ from which it follows that the front face commutes.

\item[9.] Upper triangle in the second diagram in \cref{def-graded-comonad}:
\begin{equation*}
\begin{tikzcd}
\oc A \arrow[r, "m"] \arrow[rd, dashed, equal] \arrow[rdd, "s_1"'] & \oc\oc A \arrow[d, "\oc u", dashed] \arrow[rdd, "s_1 \bullet s_1"] &                                               \\
                                                            & \oc A \arrow[rdd, "s_1", dashed]                                   &                                               \\
                                                            & \oc_{\le 1} A \arrow[rd, equal] \arrow[r, "{m_{1,1}^\le}"']                 & \oc_{\le 1}\oc_{\le 1} A \arrow[d, "\oc_{\le 1}u^\le"] \\
                                                            &                                                                    & \oc_{\le 1} A                                  
\end{tikzcd}
\end{equation*}
The back face commutes by counitiality of the comonad $\oc$. The upper face commutes by definition of $m_{1,1}^\le$. The bottom face commutes trivially. The right 
face commutes by tensoring with $s_1$ on the left in $\mathsf{End}(\mathsf{C})$ the diagram defining $u^\le$ . We deduce that the front face commutes by the same 
reasoning as for the previous diagrams.

\item[10.] Lower triangle in the second diagram in \cref{def-graded-comonad}:
\begin{equation*}
\begin{tikzcd}
\oc A \arrow[rdd, "s_1"] \arrow[d, "m"'] \arrow[rd, equal]                             &                                                    &              \\
\oc\oc A \arrow[r, "u"', dashed, shift right=2] \arrow[rdd, "s_1 \bullet s_1"'] & \oc A \arrow[rdd, "s_1"]                           &              \\
                                                                                & \oc_{\le 1}A \arrow[rd, equal] \arrow[d, "{m_{1,1}^\le}"] &              \\
                                                                                & \oc_{\le 1}\oc_{\le 1} A \arrow[r, "u^\le"']       & \oc_{\le 1}A
\end{tikzcd}
\end{equation*}
The back face commutes by counitality of the comonad $\oc$. The upper face commutes trivially. The left face commutes by definition of $m_{1,1}^\le$. The bottom face 
commutes by tensoring with $s_1$ on the right in $\mathsf{End}(\mathsf{C})$ the diagram defining $u^\le$ . We deduce that the front face commutes as before.
\item[11.] First diagram in \cref{def:graded-comonoid}:
\begin{equation*}
\begin{tikzcd}
\oc A \arrow[d, "\Delta"'] \arrow[rr, "\Delta"] \arrow[rdd, "s_{n+p+q}"]                        &                                                                                       & \oc A \otimes \oc A \arrow[d, "\Delta \otimes 1"', dashed] \arrow[rdd, "s_{n+p} \otimes s_q"] &                                                                               \\
\oc A \otimes \oc A \arrow[rr, "1 \otimes \Delta"', dashed] \arrow[rdd, "s_n \otimes s_{p+q}"'] &                                                                                       & \oc A \otimes \oc A \otimes \oc A \arrow[rdd, "s_n \otimes s_p \otimes s_q"', dashed]         &                                                                               \\
                                                                                                & \oc_{\le n+p+q}A \arrow[d, "{\Delta_{n,p+q}^\le}"] \arrow[rr, "{\Delta_{n+p,q}^\le}"] &                                                                                               & \oc_{\le n+p}A \otimes \oc_{\le q}A \arrow[d, "{\Delta_{n,p}^\le \otimes 1}"] \\
                                                                                                & \oc_{\le n}A \otimes \oc_{\le p+q} A \arrow[rr, "{1 \otimes \Delta_{p,q}^\le}"']      &                                                                                               & \oc_{\le n}A \otimes \oc_{\le p}A \otimes \oc_{\le q}A                       
\end{tikzcd}
\end{equation*}
The back face commutes by coassociativity of $\Delta$. The upper face commutes by definition of $\Delta^\le_{n+p,q}$. The left face commutes by definition of 
$\Delta^\le_{n,p+q}$. The right face commutes by tensoring with $s_q$ on the right in $\mathsf{C}$ the diagram defining $\Delta_{n,p}^\le$. The bottom face 
commutes by tensoring with $s_n$ on the left in $\mathsf{C}$ the diagram defining $\Delta_{p,q}^\le$. We deduce that the front face commutes as before.

\item[12.] Second diagram in \cref{def:graded-comonoid}:
\begin{equation*}
\begin{tikzcd}
\oc A \arrow[rr, "\Delta"] \arrow[rdd, "s_0"'] \arrow[rrd, dashed, equal] &                                                           & \oc A \otimes \oc A \arrow[d, "1 \otimes \epsilon"', dashed] \arrow[rdd, "s_0 \otimes s_0"] &                                                                       &  & \ \\
                                                                   &                                                           & \oc A \arrow[rdd, "s_0", dashed]                                                            &                                                                       &  &   \\
                                                                   & \oc_{\le 0}A \arrow[rr, "{\Delta_{0,0}^\le}"] \arrow[rrd, equal] &                                                                                             & \oc_{\le 0}A \otimes \oc_{\le 0}A \arrow[d, "1 \otimes \epsilon^\le"] &  &   \\
                                                                   &                                                           &                                                                                             & \oc_{\le 0}A                                                          &  &  
\end{tikzcd}
\end{equation*}
The back face commutes by counitality of the comonoid $\oc A$. The upper face commutes by definition of $\Delta_{0,0}^\le$. The bottom face commutes trivially. 
The right face commutes by tensoring on the left with $s_0$ in $\mathsf{C}$ the diagram defining $\epsilon^\le$. We deduce that the front face commutes as usual.
\item[13.] Third diagram in \cref{def:graded-comonoid}:
\begin{equation*}
\begin{tikzcd}
\oc A \arrow[rr, "\Delta"] \arrow[rdd, "s_{n+p}"'] \arrow[rrd, "\Delta"', dashed] &                                                                                    & \oc A \otimes \oc A \arrow[d, "\gamma"', dashed] \arrow[rdd, "s_n \otimes s_p"] &                                                       \\
                                                                                  &                                                                                    & \oc A \otimes \oc A \arrow[rdd, "s_p \otimes s_n"', dashed]                     &                                                       \\
                                                                                  & \oc_{\le n+p}A \arrow[rr, "{\Delta_{n,p}^\le}"] \arrow[rrd, "{\Delta_{p,n}^\le}"'] &                                                                                 & \oc_{\le n}A \otimes \oc_{\le p}A \arrow[d, "\gamma"] \\
                                                                                  &                                                                                    &                                                                                 & \oc_{\le p}A \otimes \oc_{\le n}A                    
\end{tikzcd}
\end{equation*}
The upper and bottom faces commute by the definitions of $\Delta_{n,p}^\le$ and $\Delta_{p,n}^\le$. The back face commutes by cocommutativity of $\Delta$. The right face commutes by 
naturality of $\gamma$. We deduce as before that the front face commutes.
\item[14.] \cref{def-d(-2)}:
\begin{equation*}
\begin{tikzcd}
\oc A \arrow[rr, "\Delta"] \arrow[d, "m"'] \arrow[rdd, "s_{(n+p)q}"]       &                                                                                  & \oc A \otimes \oc A \arrow[d, "m \otimes m"', dashed] \arrow[rdd, "s_{nq} \otimes s_{pq}"] &                                                                                            \\
\oc\oc A \arrow[rr, "\Delta"', dashed] \arrow[rdd, "s_{n+p} \bullet s_q"'] &                                                                                  & \oc\oc A \otimes \oc\oc A \arrow[rdd, "(s_n \otimes s_p) \bullet s_q"', dashed]            &                                                                                            \\
                                                                           & \oc_{\le(n+p)q}A \arrow[rr, "{\Delta_{nq,pq}^\le}"] \arrow[d, "{m_{n+p,q}^\le}"] &                                                                                            & \oc_{\le nq}A \otimes \oc_{\le pq}A \arrow[d, "{m_{n,q}^\le \otimes m_{p,q}^\le}", dashed] \\
                                                                           & \oc_{\le n+p}\oc_{\le q}A \arrow[rr, "{\Delta_{n,p}^\le}"', dashed]              &                                                                                            & \oc_{\le n}\oc_{\le q}A \otimes \oc_{\le p}\oc_{\le q}A                                   
\end{tikzcd}
\end{equation*}
The back face commutes by \cref{def-d(-2)} for $\oc$. The upper face commutes by definition of $\Delta_{np,pq}^\le$. The bottom face commutes by tensoring with $s_q$ 
on the right in $\mathsf{End}(\mathsf{C})$ the diagram defining $\Delta_{n,p}^\le$. The left face commutes by definition of $m_{n+p,q}^\le$. For the right face, first 
note that we have 
\begin{equation}
(s_n \otimes s_p) \bullet s_q=(s_n \bullet s_q) \otimes (s_p \bullet s_q)
\end{equation}
by \cref{comp-two-tensor-products}. Using this equality, we obtain that the right face commutes by tensoring in $\mathsf{C}$ the diagram defining $m_{n,q}^\le$ with the diagram 
defining $m_{p,q}^\le$. We conclude as usual that the front face commutes.

\item[15.] \cref{def-d(-1)}:
\begin{equation*}
\begin{tikzcd}
\oc A \arrow[r, "m"] \arrow[rd, "\epsilon"', dashed] \arrow[rdd, "s_0"'] & \oc\oc A \arrow[d, "\epsilon", dashed] \arrow[rdd, "s_0 \bullet s_0"] &                                                    \\
                                                                         & I \arrow[rdd, dashed, equal]                                                 &                                                    \\
                                                                         & \oc_{\le 0}A \arrow[r, "{m_{0,0}^\le}"'] \arrow[rd, "\epsilon^\le"']  & \oc_{\le 0}\oc_{\le 0} A \arrow[d, "\epsilon^\le"] \\
                                                                         &                                                                       & I                                                 
\end{tikzcd}
\end{equation*}
The back face commutes by \cref{def-d(-1)} for $\oc$. The upper face commutes by definition of $m_{0,0}^\le$. The bottom face commutes by definition of $\epsilon^\le$. 
The right face commutes by tensoring on the right with $s_0$ in $(\mathsf{End}(\mathsf{C}),\bullet,1)$ the diagram defining $\epsilon^\le$. We conclude that the 
front face commutes.

\item[16.] \cref{def-d1}:
\begin{equation*}
\begin{tikzcd}
\oc A \otimes A \arrow[rd, "\epsilon \otimes 1"] \arrow[d, "\partial"'] \arrow[rdd, "s_0\otimes 1"'] &                                                                                      &   \\
\oc A \arrow[r, "u", dashed] \arrow[rdd, "s_1"']                                                     & A \arrow[rdd, equal]                                                                        &   \\
                                                                                                     & \oc_{\le 0} A \otimes A \arrow[d, "\partial_0^{\le}"'] \arrow[rd, "\epsilon^\le \otimes 1"'] &   \\
                                                                                                     & \oc_{\le 1} A \arrow[r, "u^\le"']                                                    & A
\end{tikzcd}
\end{equation*}
The back face commutes by \cref{def-d1} for $\oc$. The top face commutes by tensoring on the right with $1_A$ in $(\mathsf{C},\otimes,I)$ the diagram defining 
$\epsilon^\le$. The bottom face commutes by definition of $u^\le$. The left face commutes by definition of $\partial_0^\le$. We conclude that the front face 
commutes by using that $s_0 \otimes 1$ is an epimorphism as a coequalizer.

\item[17.] \cref{def-d2}:
Since this identity involves a sum, we will use inline calculations rather than diagrams. We will prove the identity obtained by precomposing each side of 
$\cref{def-d2}$ by $s_{n+p+1} \otimes 1_A$. We obtain $\cref{def-d2}$ from this by using that $s_{n+p+1} \otimes 1_A$ is an epimorphism. Here are the calculations:
\begin{align*}
&s_{n+p+1} \otimes 1_A;\partial_{n+p+1}^\le;\Delta_{n+1,p+1}^{\le} \\ 
&= \partial;s_{n+p+2};\Delta_{n+1,p+1}^{\le} &\text{\scriptsize{def.~of $\partial^\le_{n+p+1}$}} \\
&=\partial;\Delta;s_{n+1}\otimes s_{p+1} &\text{\scriptsize{def.~of $\Delta^\le_{n+1,p+1}$}} \\
&=\Big(\Delta \otimes 1_A;1_{\oc A} \otimes \partial+\Delta \otimes 1_A;1_{\oc A} \otimes \gamma;\partial \otimes 1_{\oc A}\Big);s_{n+1}\otimes s_{p+1} 
&\text{\scriptsize{\cref{def-d2} for $\oc$}} \\
&=\Delta \otimes 1_A;1_{\oc A} \otimes \partial;s_{n+1} \otimes s_{p+1} \\ 
&~+\Delta \otimes 1_A;1_{\oc A} \otimes \gamma;\partial \otimes 1_{\oc A};s_{n+1} \otimes s_{p+1} &\text{\scriptsize{\cref{add-3}}} \\
&=\Delta \otimes 1_A;s_{n+1} \otimes (\partial;s_{p+1}) &\text{\scriptsize{func.~of $-\otimes-$}} \\ 
&~+\Delta \otimes 1_A;1_{\oc A} \otimes \gamma;(\partial;s_{n+1}) \otimes s_{p+1} &\text{\scriptsize{func.~of $-\otimes-$}} \\
&=\Delta \otimes 1_A;s_{n+1} \otimes (s_{p} \otimes 1_A;\partial_p^{\le}) &\text{\scriptsize{def.~of $\partial_p^\le$}} \\ 
&~+\Delta \otimes 1_A;1_{\oc A} \otimes \gamma;(s_n \otimes 1_A;\partial_n^{\le}) \otimes s_{p+1} &\text{\scriptsize{def.~of $\partial_n^\le$}} \\
&=\Delta \otimes 1_A;s_{n+1} \otimes s_{p} \otimes 1_A;1_{\oc_{\le n+1}A} \otimes \partial_p^{\le} &\text{\scriptsize{func.~of $-\otimes-$}} \\ 
&~+\Delta \otimes 1_A;1_{\oc A} \otimes \gamma_{\oc A, A};s_n \otimes 1_A \otimes s_{p+1};\partial_n^{\le} \otimes 1_{\oc_{\le p+1}A} &\text{\scriptsize{func.~of $-\otimes-$}} \\
&=(\Delta;s_{n+1} \otimes s_p) \otimes 1_A;1_{\oc_{\le n+1}A} \otimes \partial_p^{\le} &\text{\scriptsize{func.~of $-\otimes-$}} \\
&~+\Delta \otimes 1_A;1_{\oc A} \otimes s_{p+1} \otimes 1_A;s_n \otimes \gamma_{\oc_{\le p+1}A, A};\partial_n^{\le} \otimes 1_{\oc_{\le p+1}A} 
&\text{\scriptsize{nat.~of $\gamma$}} \\
&=(\Delta;s_{n+1} \otimes s_p) \otimes 1_A;1_{\oc_{\le n+1}A} \otimes \partial_p^{\le} \\
&~+\Delta \otimes 1_A;s_n \otimes s_{p+1} \otimes 1_A;1_{\oc_{\le n}A} \otimes \gamma_{\oc_{\le p+1}A, A};\partial_n^{\le} \otimes 1_{\oc_{\le p+1}A} 
&\text{\scriptsize{func.~of $-\otimes-$}} \\
&=(\Delta;s_{n+1} \otimes s_p) \otimes 1_A;1_{\oc_{\le n+1}A} \otimes \partial_p^{\le} \\
&~+(\Delta;s_n \otimes s_{p+1}) \otimes 1_A;1_{\oc_{\le n}A} \otimes \gamma_{\oc_{\le p+1}A, A};\partial_n^{\le} \otimes 1_{\oc_{\le p+1}A} 
&\text{\scriptsize{func.~of $-\otimes-$}} \\
&=(s_{n+p+1};\Delta_{n+1,p}^{\le}) \otimes 1_A;1_{\oc_{\le n+1}A} \otimes \partial_p^{\le} &\text{\scriptsize{def.~of $\Delta_{n+1,p}^\le$}} \\
&~+(s_{n+p+1};\Delta_{n,p+1}^{\le}) \otimes 1_A;1_{\oc_{\le n}A} \otimes \gamma_{\oc_{\le p+1}A, A};\partial_n^{\le} \otimes 1_{\oc_{\le p+1}A} 
&\text{\scriptsize{def.~of $\Delta_{n,p+1}^\le$}} \\
&= s_{n+p+1} \otimes 1_A;\Delta_{n+1,p}^{\le} \otimes 1_A;1_{\oc_{\le n+1}A} \otimes \partial_p^{\le} &\text{\scriptsize{func.~of $-\otimes-$}} \\ 
&~+s_{n+p+1} \otimes 1_A;\Delta_{n,p+1}^{\le} \otimes 1_A;1_{\oc_{\le n}A} \otimes \gamma_{\oc_{\le p+1}A, A};\partial_n^{\le} \otimes 1_{\oc_{\le p+1}A} 
&\text{\scriptsize{func.~of $-\otimes-$}} \\
&= s_{n+p+1} \otimes 1_A;(
\Delta_{n+1,p}^{\le} \otimes 1_A;1_{\oc_{\le n+1}A} \otimes \partial_p^{\le} \\ 
&~+\Delta_{n,p+1}^{\le} \otimes 1_A;1_{\oc_{\le n}A} \otimes \gamma_{\oc_{\le p+1}A, A};\partial_n^{\le} \otimes 1_{\oc_{\le p+1}A}) &\text{\scriptsize{\cref{add-4}}}
\end{align*}
\item[18.] \cref{def-d3}:
\begin{equation*}
\begin{tikzcd}
\oc A \otimes A \arrow[rr, "\Delta \otimes 1"'] \arrow[d, "\partial"'] \arrow[rrddd, "s_{np+n+p} \otimes 1" description] &  & \oc A \otimes \oc A \otimes A \arrow[rr, "m \otimes \partial"'] \arrow[rrddd, "s_{np+n} \otimes s_p \otimes 1"'] &  & \oc\oc A \otimes \oc A \arrow[d, "\partial", dashed] \arrow[rrddd, "(s_n \bullet s_{p+1}) \otimes s_{p+1}"] &  &                                                                              \\
\oc A \arrow[rrrr, "m", dashed] \arrow[rrddd, "s_{(n+1)(p+1)}"']                                                         &  &                                                                                                                  &  & \oc\oc A \arrow[rrddd, "s_{n+1} \bullet s_{p+1}", dashed]                                                   &  &                                                                              \\
                                                                                                                         &  &                                                                                                                  &  &                                                                                                             &  &                                                                              \\
                                                                                                                         &  & \oc_{\le np+n+p}A \otimes A \arrow[rr, "{\Delta_{np+n,p}^\le \otimes 1}"'] \arrow[d, "\partial^\le_{np+n+p}"]    &  & \oc_{\le np+n}A \otimes \oc_{\le p}A \otimes A \arrow[rr, "{m_{n,p+1}^\le \otimes \partial_p^\le}"']        &  & \oc_{\le n}\oc_{\le p+1}A \otimes \oc_{\le p+1}A \arrow[d, "\partial^\le_n"] \\
                                                                                                                         &  & \oc_{\le(n+1)(p+1)}A \arrow[rrrr, "{m^\le_{n+1,p+1}}"']                                                          &  &                                                                                                             &  & \oc_{\le n+1}\oc_{\le p+1}A                                                 
\end{tikzcd}
\end{equation*}
The back face commutes by $\cref{def-d3}$ for $\oc$. The bottom face commutes by definition of $m_{n+1,p+1}^\le$. The left cell on the top face commutes by 
tensoring with $1_A$ on the right in $(\mathsf{C},\otimes,I)$ the diagram defining $\Delta_{np+n,p}^\le$. The right cell on the top face commutes by tensoring 
in $(\mathsf{C},\otimes,I)$ the diagram defining $m_{n,p+1}^\le$ with the diagram defining $\partial_p^\le$. We deduce that the top face commutes. The left face 
commutes by definition of $\partial_{np+n+p}^\le$. For the right face, first note that we have
\begin{align*}
(s_n \bullet s_{p+1}) \otimes s_{p+1}&=(s_n \bullet s_{p+1}) \otimes (1_{1_\mathsf{C}} \bullet s_{p+1}) \\
&=(s_n \otimes 1_{1_\mathsf{C}}) \bullet s_{p+1}
\end{align*}
by \cref{comp-two-tensor-products} where $1_{1_\mathsf{C}}\colon 1_\mathsf{C} \Rightarrow 1_\mathsf{C}$ is the natural transformation defined by $(1_{1_\mathsf{C}})(A):=1_A$.
We then obtain that the right face commutes by tensoring in $(\mathsf{End}(\mathsf{C}),\bullet,1_\mathsf{C})$ the diagram defining $\partial^\le_n$ with $s_{p+1}$ on the right. 
We deduce that the front face commutes since $s_{np+n+p} \otimes 1$ is an epimorphism.

\item[19.] \cref{def-d4}:
\begin{equation*}
\begin{tikzcd}
\oc A \otimes A \otimes A \arrow[rr, "1 \otimes \gamma"'] \arrow[d, "\partial \otimes 1"'] \arrow[rrddd, "s_n \otimes 1"] &  & \oc A \otimes A \otimes A \arrow[rr, "\partial \otimes 1"'] \arrow[rrddd, "s_n \otimes 1"]             &  & \oc A \otimes A \arrow[d, "\partial", dashed] \arrow[rrddd, "s_{n+1} \otimes 1"] &  &                                                           \\
\oc A \otimes A \arrow[rrrr, "\partial"', dashed] \arrow[rrddd, "s_{n+1} \otimes 1"]                                      &  &                                                                                                        &  & \oc A \arrow[rrddd, "s_{n+2}", dashed]                                           &  &                                                           \\
                                                                                                                          &  &                                                                                                        &  &                                                                                  &  &                                                           \\
                                                                                                                          &  & \oc_{\le n}A \otimes A \otimes A \arrow[rr, "1 \otimes \gamma"'] \arrow[d, "\partial_n^\le \otimes 1"] &  & \oc_{\le n}A \otimes A \otimes A \arrow[rr, "\partial_n^\le \otimes 1"']         &  & \oc_{\le n+1} A \otimes A \arrow[d, "\partial_{n+1}^\le"] \\
                                                                                                                          &  & \oc_{\le n+1} A \otimes A \arrow[rrrr, "\partial_{n+1}^\le"']                                          &  &                                                                                  &  & \oc_{\le n+2}A                                           
\end{tikzcd}
\end{equation*}
The back face commutes by \cref{def-d4} for $\oc$. The left cell on the top face commutes since the two paths from $\oc A \otimes A \otimes A$ to $\oc_nA \otimes A \otimes A$ 
are equal to $s_n \otimes \gamma\colon \oc A \otimes A \otimes A \rightarrow \oc_nA \otimes A \otimes A$. The right cell on the top face commutes by tensoring on the right 
with $1_A$ in $(\mathsf{C},\otimes,I)$ the diagram defining $\partial_n^\le$. We deduce that the top face commutes. The bottom and the right faces commute by the definition 
of $\partial_{n+1}^\le$. The left face commutes by tensoring on the right with $1_A$ in $(\mathsf{C},\otimes,I)$ the diagram defining $\partial_n^\le$. We conclude that the 
front face commutes by using that $s_n \otimes 1$ is an epimorphism.
\end{itemize}
\subsection{Checking the equations for a $(0\colon \mathbb{N} \rightarrow 0)$-graded morphism of graded differential modalities}
We must prove that for every $n \ge 0$, the family of morphisms
\begin{equation*}
s_n\colon \oc A \rightarrow \oc_nA
\end{equation*}
for $A \in \mathsf{C}$ is a natural transfomation. We must also check that the five diagrams in \cref{def:morphism-of-graded-diff-mod} commute.

\textbf{Naturality of $s_n$:} We have already explained that this is obtained immediately from the definition of $\oc_{\le n}f$.

\textbf{Commutativity of the five diagrams in \cref{def:morphism-of-graded-diff-mod}:} The diagram involving $m_{n,p}^\le$ commutes immediately from the definition of 
$m_{n,p}^\le$. The diagram involving $u$ commutes immediately from the definition of $u^\le$. The same is true of the diagrams involving $\Delta^\le_{n,p}, \epsilon^\le$, 
and $\partial^\le_n$.

\subsection{What has been proven so far}
We have currently proven that there exists an $\mathbb{N}$-graded differential modality $(\oc_{\le n},\epsilon^\le,u^\le,\Delta^\le_{n,p},m^\le _{n,p},\partial^\le_n)$ on $(\mathsf{C},\otimes,I)$ such that:
\begin{itemize}
\item[1.] the endofunctor $\oc_{\le n}\colon\mathsf{C} \rightarrow \mathsf{C}$ applied to any object $A$ gives $\oc_{\le n}A$,
\item[2.] $(s_n)_{n \in \mathbb{N}}$ is a $(0\colon\mathbb{N} \rightarrow 0)$-graded morphism of graded differential modalities.
\end{itemize}
We will now prove that this $\mathbb{N}$-graded differential modality is unique.
\subsection{Unicity of the $\mathbb{N}$-graded differential modality}
Suppose that $(\oc_{\le n}^*,\epsilon^{\le,*},u^{\le,*},\Delta^{\le,*}_{n,p},m^{\le,*}_{n,p},\partial^{\le,*}_n)$ is another $\mathbb{N}$-graded differential modality such that:
\begin{itemize}
\item[1.] the endofunctor $\oc_{\le n}^*\colon\mathsf{C} \rightarrow \mathsf{C}$ applied to any object $A$ gives $\oc_{\le n}A$,
\item[2.] $(s_n)_{n \in \mathbb{N}}$ is a $(0\colon\mathbb{N} \rightarrow 0)$-graded morphism of graded differential modalities.
\end{itemize}
We will prove that 
\begin{equation*}
(\oc_{\le n}^*,\epsilon^{\le,*},u^{\le,*},\Delta^{\le,*}_{n,p},m^{\le,*}_{n,p},\partial^{\le,*}_n)=(\oc_{\le n},\epsilon^\le,u^\le,\Delta^\le_{n,p},m^\le_{n,p},\partial^\le_n).
\end{equation*}

\textbf{Proof of $\oc_{\le n}^*=\oc_{\le n}$:} Let $f \in \mathsf{C}(A,B)$. Since $(s_n)$ is a $(0\colon\mathbb{N} \rightarrow 0)$-graded morphism of graded differential 
modalities, we obtain that the diagram
\begin{equation*}
\begin{tikzcd}
\oc A \arrow[d, "\oc f"] \arrow[r, "s_n"] & \oc_{\le n} A \arrow[d, "\oc_{\le n}^*f"] \\
\oc B \arrow[r, "s_n"]                    & \oc_{\le n} B                          
\end{tikzcd}
\end{equation*}
commutes. By the definition of $\oc_{\le n}f$, it implies that $\oc_{\le n}^*f=\oc_{\le n}f$.

\textbf{Proof of $\epsilon^{\le,*}=\epsilon^\le$:} Let $A \in \mathsf{C}$. Since $(s_n)$ is a $(0\colon\mathbb{N} \rightarrow 0)$-graded morphism of graded differential 
modalities, we obtain that the diagram 
\begin{equation*}
\begin{tikzcd}
\oc A \arrow[r, "s_0"] \arrow[rd, "\epsilon"'] & \oc_{\le 0}A \arrow[d, "{\epsilon^{\le,*}}"] \\
                                               & I                                           
\end{tikzcd}
\end{equation*}
commutes. By the definition of $\epsilon^{\le}$ it implies that $\epsilon^{\le,*}=\epsilon^\le\colon \oc A \rightarrow \oc_{\le 0}A$.

\textbf{Proof of $u^{\le,*}=u^\le$, $\Delta_{n,p}^{\le,*}=\Delta_{n,p}^\le$, $m_{n,p}^{\le,*}=m_{n,p}^\le$ and $\partial_n^{\le,*}=\partial_n^\le$:} These identities 
are proven in exactly the same way as $\epsilon^{\le,*}=\epsilon^\le$.
\subsection{What has been proven so far}
We have currently proven that there exists a \emph{unique} $\mathbb{N}$-graded differential modality $(\oc_{\le n},\epsilon^\le,u^\le,\Delta^\le_{n,p},m^\le _{n,p},\partial^\le_n)$ on $(\mathsf{C},\otimes,I)$
such that:
\begin{itemize}
\item[1.] the endofunctor $\oc_{\le n}\colon\mathsf{C} \rightarrow \mathsf{C}$ applied to any object $A$ gives $\oc_{\le n}A$,
\item[2.] $(s_n)_{n \in \mathbb{N}}$ is a $(0\colon\mathbb{N} \rightarrow 0)$-graded morphism of graded differential modalities.
\end{itemize}
We will now prove the second part of \cref{theorem:extracting}, that is:

``Moreover, there exists a unique family of morphisms $t_{n,p}\colon \oc_{\le n}A \rightarrow \oc_{\le p}A$ defined for all $(n,p) \in \mathbb{N}^2$ with $n \ge p$ 
and $A \in \mathsf{C}$ such that:
\begin{itemize}
\item[1.] $(\oc_{\le n},\epsilon^\le,u^\le,\Delta^\le_{n,p},m^\le _{n,p},\partial^\le_n,t_{n,p})$ is an $\mathbb{N}$-filtered differential modality,
\item[2.] $(s_n)_{n \in \mathbb{N}}$ is a $(0\colon\mathbb{N} \rightarrow 0)$-filtered morphism of $\mathbb{N}$-filtered differential modalities.''
\end{itemize}
\subsection{Defining the data for an $\mathbb{N}$-filtered differential modality}
We will define $t_{n,p}\colon \oc_{\le n}A \rightarrow \oc_{\le p}A$ for all $(n,p) \in \mathbb{N}^2$ with $n \ge p$. Let $(n,p) \in \mathbb{N}^2$ be such an ordered pair 
and let $A \in \mathsf{C}$. We use the following diagram:
\begin{equation*}
\begin{tikzcd}
\oc A \otimes A^{\otimes (n+1)} \arrow[rr, "\partial^{n-p} \otimes 1_{A^{\otimes(p+1)}}"'] \arrow[rrrr, "\partial^{n+1}", bend left] &  & \oc A \otimes A^{\otimes (p+1)} \arrow[rr, "\partial^{p+1}"'] &  & \oc A \arrow[rd, "s_p"'] \arrow[r, "s_n"'] & \oc_{\le n}A \arrow[d, "{t_{n,p}}", dashed] \\
                                                                                                                                     &  &                                                               &  &                                            & \oc_{\le p}A~.                               
\end{tikzcd}
\end{equation*}
Some comments are needed: The top cell commutes by \cref{d-k-l}. We thus have
\begin{equation*}
\partial^{n+1};s_p=\partial^{n-p} \otimes 1_{A^{\otimes (p+1)}};\partial^{p+1};s_p=\partial^{n-p} \otimes 1_{A^{\otimes (p+1)}};0=0
\end{equation*}
by definition of $s_p$. By definition of $s_n$, there exists a unique morphism $t_{n,p}$ such that the triangle commutes.
\subsection{Checking the equations for an $\mathbb{N}$-filtered differential modality}
We will now check that $(\oc_{\le n},\epsilon^\le,u^\le,\Delta^\le_{n,p},m^\le _{n,p},\partial^\le_n)$ is an $\mathbb{N}$-filtered differential modality. Recall that we 
already know this is an $\mathbb{N}$-graded differential modality. All we have to prove is that each $t_{n,p}\colon \oc_{\le n}A \rightarrow \oc_{\le p}A$ is a natural 
transformation, that the two identities in \cref{def:filtered-diff-mod} are satisfied and that  and that the three diagrams in \cref{def:filtered-diff-mod} commute.

\textbf{Naturality of $t_{n,p}$:} Let $(n,p) \in \mathbb{N}^2$ such that $n \ge p$ and let $f \in \mathsf{C}(A,B)$. We must prove that the diagram
\begin{equation*}
\begin{tikzcd}
\oc_{\le n}A \arrow[rr, "{t_{n,p}}"] \arrow[d, "\oc_{\le n}f"'] &  & \oc_{\le p}A \arrow[d, "\oc_{\le p}f"] \\
\oc_{\le n}B \arrow[rr, "{t_{n,p}}"']                           &  & \oc_{\le p}B                          
\end{tikzcd}
\end{equation*}
commutes. We will use the following diagram
\begin{equation*}
\begin{tikzcd}
\oc B \arrow[rdd, "s_n"'] &                                                                    & \oc A \arrow[ld, "s_n"'] \arrow[rd, "s_p"] \arrow[ll, "\oc f"'] \arrow[rr, "\oc f"] &                                         & \oc B \arrow[ldd, "s_p"] \\
                          & \oc_{\le n}A \arrow[rr, "{t_{n,p}}"] \arrow[d, "\oc_{\le n}f"] &                                                                                     & \oc_{\le p}A \arrow[d, "\oc_{\le p}f"'] &                          \\
                          & \oc_{\le n}B \arrow[rr, "{t_{n,p}}"']                          &                                                                                     & \oc_{\le p}B~.                            &                         
\end{tikzcd}
\end{equation*}
In this diagram, the cell on the left and the cell on the right commute by the naturalities of $s_n$ and $s_p$. The triangle commutes by definition of $t_{n,p}^\le$. 
Finally, the two exterior paths from $\oc A$ to $\oc_{\le p}B$ are equal since $s_n;t_{n,p}^\le=s_p$ by definition of $t_{n,p}^\le$. By diagram chasing, we obtain that
\begin{equation*}
s_n;\oc_{\le n}f;t_{n,p}=s_n;t_{n,p};\oc_{\le p}f.
\end{equation*}
Since $s_n$ is an epimorphism, we deduce that the square commutes.

\textbf{First identity in \cref{def:filtered-diff-mod}:} Let $n \in \mathbb{N}$. We have $s_{n};1_{\oc_{\le n}A}=s_n$. By definition of $t_{n,n}$, it implies that 
$t_{n,n}=1_{\oc_{\le n}A}$.

\textbf{Second identity in \cref{def:filtered-diff-mod}:} Let $n,p,q \in \mathbb{N}$ such that $n \ge p \ge q$. We must prove that $t_{n,p};t_{p,q}=t_{n,q}$. 
We will use the following diagram:
\begin{equation*}
\begin{tikzcd}
\oc A \arrow[r, "s_n"] \arrow[rd, "s_p"] \arrow[rdd, "s_q"'] & \oc_{\le n}A \arrow[d, "{t_{n,p}}"] \\
                                                             & \oc_{\le p}A \arrow[d, "{t_{p,q}}"] \\
                                                             & \oc_{\le q}A~.                      
\end{tikzcd}
\end{equation*}
In this diagram the two inner triangles commute by the definitions of $t_{n,p}$ and $t_{p,q}$. It follows that the exterior triangle commute. By definition of $t_{n,q}$, 
we thus have $t_{n,p};t_{p,q}=t_{n,q}$.

\textbf{Commutativity of the first diagram in \cref{def:filtered-diff-mod}:} Let $n,p,q,r \in \mathbb{N}$ such that $n \ge q$ and $p \ge r$, and let $A \in \mathsf{C}$.

We must prove that the diagram
\begin{equation*}
\begin{tikzcd}
\oc_{\le n+p}A \arrow[rr, "{t_{n+p,q+r}}"] \arrow[d, "{\Delta_{n,p}^\le}"'] &  & \oc_{\le q+r}A \arrow[d, "{\Delta_{q,r}^\le}"] \\
\oc_{\le n}A \otimes \oc_{\le p}A \arrow[rr, "{t_{n,q} \otimes t_{p,r}}"'] &  & \oc_{\le q}A \otimes \oc_{\le r}A              
\end{tikzcd}
\end{equation*}
commutes. We will use the following diagram:
\begin{equation*}
\begin{tikzcd}
                                                   & \oc A \arrow[d, "s_{n+p}"'] \arrow[rrd, "s_{q+r}"] \arrow[ld, "\Delta"'] \arrow[rrr, "\Delta"] &  &                                         & \oc A \otimes \oc A \arrow[ldd, "s_q \otimes s_r"] \\
\oc A \otimes \oc A \arrow[rd, "s_n \otimes s_p"'] & \oc_{\le n+p}A \arrow[rr, "{t_{n+p,q+r}}"'] \arrow[d, "{\Delta_{n,p}^\le}"']                           &  & \oc_{\le q+r}A \arrow[d, "{\Delta_{q,r}^\le}"'] &                                                    \\
                                                   & \oc_{\le n}A \otimes \oc_{\le p}A \arrow[rr, "{t_{n,q} \otimes t_{p,r}}"']                                 &  & \oc_{\le q}A \otimes \oc_rA~.                  &                                                   
\end{tikzcd}
\end{equation*}
In this diagram the two cells involving $\Delta$ commute by the definitions of $\Delta_{n,p}^\le$ and $\Delta_{q,r}^\le$. Moreover, the triangle commutes by definition of 
$t_{n+p,q+r}$. Finally, the two exterior paths going from $\oc A$ to $\oc_{\le q}A \otimes \oc_{\le r}A$ are equal since $s_n;t_{n,q}=s_q$ and $s_p;t_{p,r}=s_r$ by the definitions of 
$t_{n,q}$ and $t_{p,r}$. We obtain by diagram chasing that
\begin{equation*}
s_{n+p};\Delta^\le_{n,p};t_{n,q} \otimes t_{p,r}=s_{n+p};t_{n+p,q+r};\Delta^\le_{q,r}.
\end{equation*}
Since $s_{n+p}$ is an epimorphism, we deduce that the square commutes.

\textbf{Commutativity of the second diagram in \cref{def:filtered-diff-mod}:} Let $n,p,q,r \in \mathbb{N}$ such that $n \ge q$ and $p \ge r$. And let $A \in \mathsf{C}$. 
We must prove that the diagram
\begin{equation*}
\begin{tikzcd}
\oc_{\le np}A \arrow[rr, "{t_{np,qr}}"] \arrow[d, "{m_{n,p}^\le}"'] &  & \oc_{\le qr}A \arrow[d, "{m_{q,r}^\le}"] \\
\oc_{\le n}\oc_{\le p}A \arrow[rr, "{t_{n,q} \bullet t_{p,r}}"']    &  & \oc_{\le q}\oc_{\le r}A                 
\end{tikzcd}
\end{equation*}
commutes. We will use the following diagram:
\begin{equation*}
\begin{tikzcd}
                                        & \oc A \arrow[d, "s_{np}"'] \arrow[rrd, "s_{qr}"] \arrow[ld, "m"'] \arrow[rrr, "m"] &  &                                           & \oc\oc A \arrow[ldd, "s_q \bullet s_r"] \\
\oc\oc A \arrow[rd, "s_n \bullet s_p"'] & \oc_{\le np}A \arrow[rr, "{t_{np,qr}}"'] \arrow[d, "{m_{n,p}^\le}"']               &  & \oc_{\le qr}A \arrow[d, "{m_{q,r}^\le}"'] &                                         \\
                                        & \oc_{\le n}\oc_{\le p}A \arrow[rr, "{t_{n,q} \bullet t_{p,r}}"']                   &  & \oc_{\le q}\oc_{\le r}A                   &                                        
\end{tikzcd}
\end{equation*}
In this diagram the two cells involving $m$ commute by the definitions of $m_{n,p}^\le$ and $m_{q,r}^\le$. Moreover, the triangle commutes by definition of $t_{np,qr}$. 
Finally, the two exterior paths going from $\oc A$ to $\oc_{\le q}\oc_{\le r}A$ are equal since $s_n;t_{n,q}=s_q$ and $s_p;t_{p,r}=s_r$ by definitions of $t_{n,q}$ 
and $t_{p,r}$. We obtain by diagram chasing that
\begin{equation*}
s_{np};m^\le_{n,p};t_{n,q} \bullet t_{p,r}=s_{np};t_{np,qr};m_{q,r}^\le.
\end{equation*}
Since $s_{np}$ is an epimorphism, it follows that the square commutes.

\textbf{Commutativity of the third diagram in \cref{def:filtered-diff-mod}:} Let $n,p \in \mathbb{N}$ such that $n \ge p$ and let $A \in \mathsf{C}$. 
We must prove that the diagram
\begin{equation*}
\begin{tikzcd}
\oc_{\le n}A \otimes A \arrow[d, "\partial_n^\le"'] \arrow[rr, "{t_{n,p} \otimes 1}"] &  & \oc_{\le p}A \otimes A \arrow[d, "\partial_p^\le"] \\
\oc_{\le n+1}A \arrow[rr, "{t_{n+1,p+1}}"']                                           &  & \oc_{\le p+1}A                                    
\end{tikzcd}
\end{equation*}
commutes. We will use the following diagram:
\begin{equation*}
\begin{tikzcd}
                             & \oc A \otimes A \arrow[d, "s_n \otimes 1"'] \arrow[rrd, "s_p \otimes 1"] \arrow[ld, "\partial"'] \arrow[rrr, "\partial"] &  &                                                     & \oc A \arrow[ldd, "s_{p+1}"] \\
\oc A \arrow[rd, "s_{n+1}"'] & \oc_{\le n}A \otimes A \arrow[d, "\partial_n^\le"'] \arrow[rr, "{t_{n,p} \otimes 1}"]                                    &  & \oc_{\le p}A \otimes A \arrow[d, "\partial_p^\le"'] &                              \\
                             & \oc_{\le n+1}A \arrow[rr, "{t_{n+1,p+1}}"']                                                                              &  & \oc_{\le p+1}A~.                                     &                             
\end{tikzcd}
\end{equation*}
In this diagram the two cells involving $\partial$ commute by the definition of $\partial_n^\le$ and the definition of $\partial_p^\le$. Moreover, the triangle commutes 
by definition of $t_{n,p}$. Finally, the two exterior paths going from $\oc A \otimes A$ to $\oc_{\le p+1}A$ are equal since $s_{n+1};t_{n+1,p+1}=s_{p+1}$ by definition 
of $t_{n+1,p+1}$. We obtain by diagram chasing that
\begin{equation*}
s_n \otimes 1;\partial_n^\le;t_{n+1,p+1}=s_n \otimes 1;t_{n,p} \otimes 1;\partial_p^\le.
\end{equation*}
Since $s_n \otimes 1$ is an epimorphism as a coequalizer, we deduce that the square commutes.
\subsection{Checking the equations for a $(0\colon \mathbb{N} \rightarrow 0)$-filtered morphism of filtered differential modalities}
We want to prove that the family of all the natural transformations $s_n:\oc_{\le n}A \rightarrow \oc A$ is a $(0\colon \mathbb{N} \rightarrow 0)$-filtered morphism of 
filtered differential modalities from $(\oc_{\le n},\epsilon^\le,u^\le,\Delta^\le_{n,p},m^\le _{n,p},\partial^\le_n)$ to $(\oc,\epsilon,u,\Delta,m,\partial)$. 
We already know that $(s_n)$ is a morphism of graded differential modalities. All we have to prove is that the diagram in \cref{def-morphism-filtered} commutes. 
That is, we must prove that for all $(n,p) \in \mathbb{N}^2$ with $n \ge p$ and $A \in \mathsf{C}$, the diagram
\begin{equation*}
\begin{tikzcd}
\oc A \arrow[d, "s_n"'] \arrow[rrd, "s_p"] &  &              \\
\oc_{\le n}A \arrow[rr, "{t_{n,p}}"']      &  & \oc_{\le p}A
\end{tikzcd}
\end{equation*}
commutes. But this commutativity follows immediately from the definition of $t_{n,p}$.
\subsection{What has been proven so far}
So far, we have proven that there exists a unique $\mathbb{N}$-graded differential modality $(\oc_{\le n},\epsilon^\le,u^\le,\Delta^\le_{n,p},m^\le _{n,p},\partial^\le_n)$ on $(\mathsf{C},\otimes,I)$
such that:
\begin{itemize}
\item[1.] the endofunctor $\oc_{\le n}\colon\mathsf{C} \rightarrow \mathsf{C}$ applied to any object $A$ gives $\oc_{\le n}A$,
\item[2.] $(s_n)_{n \in \mathbb{N}}$ is a $(0\colon\mathbb{N} \rightarrow 0)$-graded morphism of graded differential modalities.
\end{itemize}
We have also proven that there exists a family of morphisms $t_{n,p}\colon \oc_{\le n}A \rightarrow \oc_{\le p}A$ defined for all $(n,p) \in \mathbb{N}^2$ with $n \ge p$ 
and $A \in \mathsf{C}$ such that:
\begin{itemize}
\item[1.] $(\oc_{\le n},\epsilon^\le,u^\le,\Delta^\le_{n,p},m^\le _{n,p},\partial^\le_n,t_{n,p})$ is an $\mathbb{N}$-filtered differential modality,
\item[2.] $(s_n)_{n \in \mathbb{N}}$ is a $(0\colon\mathbb{N} \rightarrow 0)$-filtered morphism of $\mathbb{N}$-filtered differential modalities.
\end{itemize}
We will now prove that the family of morphisms $t_{n,p}$ is unique.
\subsection{Unicity of the $t_{n,p}$}
Suppose that for every $(n,p) \in \mathbb{N}^2$ such that $n \ge p$, there exists a family of morphisms $t_{n,p}^* \colon \oc_{\le n}A \rightarrow \oc_{\le p}A$ where $A \in \mathsf{C}$ such that:
\begin{itemize}
\item[1.] $(\oc_{\le n},\epsilon^\le,u^\le,\Delta^\le_{n,p},m^\le _{n,p},\partial^\le_n,t_{n,p}^*)$ is an $\mathbb{N}$-filtered differential modality,
\item[2.] $(s_n)_{n \in \mathbb{N}}$ is a $(0\colon\mathbb{N} \rightarrow 0)$-filtered morphism of $\mathbb{N}$-filtered differential modalities.
\end{itemize}
Let $(n,p) \in \mathbb{N}^2$ such that $n \ge p$. From \cref{def-morphism-filtered}, the diagram
\begin{equation*}
\begin{tikzcd}
\oc A \arrow[d, "s_n"'] \arrow[rrd, "s_p"] &  &              \\
\oc_{\le n}A \arrow[rr, "{t_{n,p}^*}"']      &  & \oc_{\le p}A
\end{tikzcd}
\end{equation*}
commutes. By the definition of $t_{n,p}$, we thus have $t_{n,p}=t_{n,p}^*$. This concludes the proof of \cref{theorem:extracting}.
\section{The example of $\mathsf{Rel}$} \label{sec:example-rel}
The category $\mathsf{Rel}$ of sets and relations can be made into a cartesian symmetric monoidal category $(\mathsf{Rel},\times,\{*\})$. Moreover, this is an additive 
symmetric monoidal category where the sum of two relations $R$ and $S$ from $X$ to $Y$ is given by $R+S:=R \cup S$, and where the zero morphism from $X$ to $Y$ is 
$0:=\emptyset$. We define the differential modality as the multiset functor $\mathcal{M}\colon \mathsf{Rel} \rightarrow \mathsf{Rel}$. We can define the natural transformations 
\begin{align*}
m\colon \mathcal{M}A \rightarrow \mathcal{M}\mathcal{M}A, \\
u\colon \mathcal{M}A \rightarrow A, \\
\Delta\colon \mathcal{M}A \rightarrow \mathcal{M}A \times \mathcal{M}A, \\
\epsilon\colon \mathcal{M}A \rightarrow \{*\} \\
\partial\colon \mathcal{M}A \times A \rightarrow \mathcal{M}A
\end{align*}
as follows:
\begin{align*}
m=\{(M,N) \in \mathcal{M}A \times \mathcal{M}\mathcal{M}A~|~M=\sum N\}, \\
u=\{([x],x)~|~x \in A\}, \\
\Delta=\{(M,(N,P)) \in \mathcal{M}A \times (\mathcal{M}A \times \mathcal{M}A)~|~M=N+P\}, \\
\epsilon=\{([\,],*)\}, \\
\partial=\{((M,x),M+[x])~|~M \in \mathcal{M}A,~x \in A\}.
\end{align*}
This is a well-known differential modality \cite{DIFCAT}.

In $\mathsf{Rel}$, given any relation $R\colon X \rightarrow Y$, a cokernel of $R$ is given by
\begin{equation*}
S\colon Y \rightarrow Y \backslash \mathrm{im}(R)
\end{equation*}
where we define $\mathrm{im}(R):=\{y \in Y~|~\exists x \in X,~(x,y) \in R\}$ and $S:=\{(y,y),~y \in Y \backslash \mathrm{im}(R)\}$. Moreover, it is easy to check 
that for every other set $Z$, the following is a cokernel diagram:
\begin{equation*}
\begin{tikzcd}
X \times Z \arrow[r, "R \times 1"] & Y \times Z \arrow[r, "S \times 1"] & (Y \backslash \mathrm{im}(R)) \times Z~.
\end{tikzcd}
\end{equation*}
For every $n \ge 0$, the relation
\begin{equation*}
\partial^n\colon \mathcal{M}X \times X^n \rightarrow \mathcal{M}X
\end{equation*}
is given by
\begin{equation*}
\partial^n:=\{((M,(x_1,\dots,x_n)),M+[x_1,\dots,x_n])~|~M \in \mathcal{M}X,~(x_1,\dots,x_n) \in X^n\}
\end{equation*}
and we have $\mathrm{im}(\mathrm{\partial^n})=\{M \in \mathcal{M}(x)~|~|M| \ge n\}$. We thus obtain a cokernel diagram
\begin{equation*}
\begin{tikzcd}
\mathcal{M}X \times X^n \arrow[r, "\partial^{n+1}"] & \mathcal{M}X \arrow[r, "s_n"] & \mathcal{M}_{\le n}X
\end{tikzcd}
\end{equation*}
where $s_n=\{(M,M)~|~M \in \mathcal{M}_{\le n}X\}$.

We can apply \cref{theorem:extracting} to obtain an $\mathbb{N}$-filtered differential modality $(\mathcal{M}_{\le n})$ 
on $\mathsf{Rel}$ and a morphism of filtered differential modalities from $\mathcal{M}$ to $(\mathcal{M}_{\le n})$ where for all set $X$ and $n \ge 0$, we have 
$\mathcal{M}_{\le n}X=\{M \in \mathcal{M}X~|~|M| \le n\}$. We thus obtain natural transformations of the following type:
\begin{align*}
m_{n,p}^\le\colon \mathcal{M}_{\le np}A \rightarrow \mathcal{M}_{\le n}\mathcal{M}_{\le p}A, \\
u^\le\colon \mathcal{M}_{\le 1}A \rightarrow A, \\
\nabla_{n,p}^\le\colon \mathcal{M}_{\le n+p}A \rightarrow \mathcal{M}_{\le n}A \times \mathcal{M}_{\le p}A, \\
\eta^\le\colon \mathcal{M}_{\le 0}A \rightarrow \{*\}, \\
\partial_{\le n}^\le \colon \mathcal{M}_{\le n}A \times A \rightarrow \mathcal{M}_{\le n+1}A, \\
t_{n,p}\colon \mathcal{M}_{\le n}A \rightarrow \mathcal{M}_{\le p}A
\end{align*}
where $t_{n,p}$ is defined for all $n,p \ge 0$ such that $n \ge p$ by $t_{n,p}=\{(M,M)~M \in \mathcal{M}_{\le p}X\}$.

\section{The example of the symmetric algebra} \label{sec:example}
Let $k$ be a field. The additive symmetric monoidal category $\mathsf{Vec}_k^{\mathrm{op}}$ is a differential category. We define the differential modality as 
the symmetric algebra functor $S\colon \mathsf{Vec}_k \rightarrow \mathsf{Vec}_k$. This is automatically a monad since $S=U \circ F$ where 
$(U\colon\mathsf{CAlg}_k \rightarrow \mathsf{Vec}_k) \dashv (F\colon \mathsf{Vec}_k \rightarrow \mathsf{CAlg}_k)$ is the free commutative algebra adjunction. 
We thus have the following natural transformations:
\begin{align*}
m\colon SSA \rightarrow SA, \\
u\colon A \rightarrow SA, \\
\nabla\colon SA \otimes SA \rightarrow SA, \\
\eta\colon I \rightarrow SA.
\end{align*}
We have
\begin{equation*}
SA=\underset{n \ge 0}{\bigoplus}S^nA
\end{equation*}
where $S^0A=k$ and for every $n \ge 1$,
\begin{equation*}
S^nA=\frac{A^{\otimes n}}{\mathrm{span}_k\{a_1 \otimes \dots \otimes a_n - a_{\sigma(1)} \otimes \dots \otimes a_{\sigma(n)}~|~(a_1,\dots,a_n) \in A^n,~\sigma \in S_n\}}.
\end{equation*}
Let $n \ge 1$. If $(a_1,\dots,a_n) \in A^n$, then we write $a_1 \otimes_s \dots \otimes_s a_n$ for the image of $a_1 \otimes \dots \otimes a_n \in A^{\otimes n}$ 
under the quotient map $A^{\otimes n} \rightarrow S^nA$. We call $a_1 \otimes_s \dots \otimes_s a_n$ a pure symmetric tensor. Each element of $S^nA$ can be written 
as a finite sum of pure symmetric tensors, although almost never in a unique way. The universal property of $S^nA$ is a follows \cite{BOURBAKI}: given any vector space $B$ and 
symmetric multilinear map\footnote{A symmetric multilinear map $\phi\colon A^n \rightarrow B$ is a function such that for every permutation $\sigma \in S^n$, 
we have $m(a_{\sigma(1)},\dots,a_{\sigma(n)})=m(a_1,\dots,a_n)$, and for all $1 \le k \le n$ and $(a_1,\dots,a_{k-1},a_{k+1},\dots,a_n) \in A^{n-1}$, 
the map $a_k \mapsto m(a_1,\dots,a_n)$ is linear.} $\phi\colon A^n \rightarrow B$, there exists a unique linear map $\psi\colon S^nA \rightarrow B$ 
such that $\psi(a_1 \otimes_s \dots \otimes_s a_n)=\phi(a_1,\dots,a_n)$.

We now define the deriving transformation
\begin{equation*}
\partial\colon SA \rightarrow SA \otimes A
\end{equation*}
as follows:
\begin{equation*}
\partial(\lambda):=0
\end{equation*}
for every $\lambda \in S^0A=k$ and for every $n \ge 1$,
\begin{equation*}
\partial(a_1 \otimes_s \dots \otimes_s a_n):=\underset{1 \le i \le n}{\sum}(a_1 \otimes_s \dots \otimes_s a_{i-1} \otimes_s a_{i+1} \otimes_s \dots \otimes_s a_n) \otimes a_i.
\end{equation*}
The map $\partial$ is well-defined on $S^nA$ by using the universal property of $S^nA$. Indeed, the map $\phi\colon A^n \rightarrow SA \otimes A$ defined by 
$\phi(a_1,\dots,a_n):=\underset{1 \le i \le n}{\sum}(a_1 \otimes_s \dots \otimes_s a_{i-1} \otimes_s a_{i+1} \otimes_s \dots \otimes_s a_n) \otimes a_i$ is symmetric multilinear.

Note that all the natural transformations are in the opposite direction since the differential category is $\mathsf{Vec}_k^{\mathrm{op}}$ and not $\mathsf{Vec}_k$. 
\textbf{From here and until the end of this section, we will denote by $E$ a vector space with a basis $(e_i)_{i \in I}$ whereas $A$ will denote an arbitrary vector space.} 
We will simplify the description of the deriving transformation using polynomials. Let $n \ge 1$. We can define
\begin{equation*}
e_M:=e_1^{M(1)} \otimes_s \cdots \otimes_s e_{n}^{M(n)}
\end{equation*}
for any multiset $M \in \mathcal{M}_n(I)$. We obtain a basis $(e_M)_{M \in \mathcal{M}_n(I)}$ of $S^nE$. We can also define
\begin{equation*}
x^M:=x_1^{M(1)}\cdots x_n^{M(n)} \in k[x_i,~i \in I]
\end{equation*}
for every multiset $M \in \mathcal{M}_n(I)$. We have an isomorphism of $k$-algebras
\begin{equation} \label{iso-phi}
\phi\colon SE \simeq  k[x_i,~i\in I] 
\end{equation}
given by
\begin{equation*}
\phi(\lambda):=\lambda
\end{equation*}
for every $\lambda \in S^0E=k$ and
\begin{equation*}
\phi(e_M):=x^M
\end{equation*}
for every nonempty multiset $M \in \mathcal{M}(I)$. We also have an isomorphism of vector spaces
\begin{equation*}
\psi_1\colon SE \otimes E \simeq (k[x_i,~i \in I])^{(I)}
\end{equation*}
given by 
\begin{equation*}
e_M \otimes e_j \mapsto y_{M,j}
\end{equation*}
where $y_{M,j}$ is defined by
\begin{equation*}
y_{M,j}(i):=\left\{
\begin{aligned}
x^M~&\text{if }i=j, \\
0~&\text{else.}
\end{aligned}
\right.
\end{equation*}
If we define $\partial^*$ as the unique linear map such that the diagram
\begin{equation*}
\begin{tikzcd}
SE \arrow[rr, "\partial"] \arrow[d, "\phi"'] &  & SE \otimes E \arrow[d, "\psi_1"] \\
{k[x_i,~i \in I]} \arrow[rr, "\partial^*"']  &  & {(k[x_i,~i \in I])^{(I)}}       
\end{tikzcd}
\end{equation*}
commutes, then we have
\begin{equation*}
(\partial^* f)(i)=\frac{\partial f}{\partial x_i}
\end{equation*}
for all $f \in k[x_i,~i \in I]$ and $i \in I$.

More generally, for every $r \ge 0$, we have an isomorphism of vector spaces
\begin{equation*}
\psi_r\colon SE \otimes E^{\otimes r} \simeq (k[x_i,~i \in I])^{(I^r)}
\end{equation*}
given by
\begin{equation*}
e_M \otimes e_{j_1} \otimes \dots \otimes e_{j_r} \mapsto y_{M,j_1,\dots,j_r}
\end{equation*}
where $y_{M,j_1,\dots,j_r}$ is defined by
\begin{equation*}
y_{M,j_1,\dots,j_r}(i_1,\dots,i_r):=\left\{
\begin{aligned}
x^M~&\text{if }(i_1,\dots,i_r)=(j_1,\dots,j_r), \\
0~&\text{else.}
\end{aligned}
\right.
\end{equation*}
If we define $\partial^{r,*}$ as the unique linear map such that the diagram
\begin{equation*}
\begin{tikzcd}
SE \arrow[rr, "\partial^r"] \arrow[d, "\phi"']    &  & SE \otimes E^{\otimes r} \arrow[d, "\psi_1"] \\
{k[x_i,~i \in I]} \arrow[rr, "{\partial^{r,*}}"'] &  & {(k[x_i,~i \in I])^{(I^r)}}                 
\end{tikzcd}
\end{equation*}
commutes, then we have
\begin{equation*}
(\partial^{r,*}f)(i_1,\dots,i_r)=\frac{\partial^rf}{\partial x_{i_1}\dots x_{i_r}}.
\end{equation*}

The tensor product of vector spaces preserves kernels,\footnote{
It is well-known \cite{WEIBEL} that for every vector space $A$, the functor $- \otimes A\colon \mathsf{Vec}_k \rightarrow \mathsf{Vec}_k$ is exact, thus in particular left-exact. That is, if $0 \rightarrow E \overset{f}{\rightarrow} F \overset{g}{\rightarrow} G$ is an exact sequence, then $0 \rightarrow E \otimes A \overset{f \otimes 1}{\rightarrow} F \otimes A \overset{g \otimes 1}{\rightarrow} G \otimes A$ is also an exact sequence. But a diagram $X \overset{u}{\rightarrow} Y \overset{v}{\rightarrow} Z$ in $\mathsf{Vec}_k$ is a kernel diagram iff the sequence $0 \rightarrow X \overset{u}{\rightarrow} Y \overset{v}{\rightarrow} Z$ is exact. We thus obtain that the functor $- \otimes A\colon \mathsf{Vec}_k \rightarrow \mathsf{Vec}_k$ sends kernel diagrams to kernel diagrams.} 
thus in particular the kernel
\begin{equation*}
\begin{tikzcd}
\mathrm{ker}(\partial^{n+1}) \arrow[rr, "s_n"] &  & SA \arrow[rr, "\partial^{n+1}"] &  & SA \otimes A^{\otimes (n+1)}
\end{tikzcd}
\end{equation*}
where $s_n$ is the inclusion. It follows that the diagram
\begin{equation*}
\begin{tikzcd}
\mathrm{ker}(\partial^{n+1}) \otimes A \arrow[rr, "s_n"] &  & SA \otimes A \arrow[rr, "\partial^{n+1} \otimes 1_A"] &  & SA \otimes A^{\otimes (n+2)}
\end{tikzcd}
\end{equation*}
is a kernel diagram. We can thus apply \cref{theorem:extracting} to obtain an $\mathbb{N}$-filtered differential modality $(S_{\le r}\colon \mathsf{Vec}_k \rightarrow \mathsf{Vec}_k)_{r \ge 0}$ on $\mathsf{Vec}_k^{\mathrm{op}}$ and a morphism of filtered differential modalities from $(S_{\le r})$ to $S$. 
The $\mathbb{N}$-filtered differential modality and the morphism of filtered differential modalities are constituted of natural transformations of the following types:
\begin{align*}
m_{r,s}\colon S_{\le r}S_{\le s}A \rightarrow S_{\le rs}A \\
u\colon A \rightarrow S_{\le 1}A \\
\nabla_{r,s}\colon S_{\le r}A \otimes S_{\le s}A \rightarrow S_{\le r+s}A \\
\eta\colon k \rightarrow S_{\le 0}A \\
\partial_{\le r}\colon S_{\le r+1}A \rightarrow S_{\le r}A \otimes A \\
s_r\colon  SA \rightarrow S_{\le r}A \\
t_{r,s}\colon S_{\le s}A \rightarrow S_{\le r}A
\end{align*}
where $t_{r,s}$ is defined for every $(r,s) \in \mathbb{N}^2$ with $r \ge s$ and
\begin{equation*}
S_{\le r}A=\mathrm{ker}(\partial^{r+1}\colon SA \rightarrow SA \otimes A^{\otimes r}).
\end{equation*}
In the case where $A=E$, we have
\begin{equation*}
S_{\le r}E \simeq \mathrm{ker}(\partial^{r+1,*}\colon k[x_i,~i \in I] \rightarrow k[x_i,~i \in I]^{(I^{r+1})}).
\end{equation*}
We will be interested in the vector space on the right-hand-side for every $r \ge 0$. We thus define
\begin{equation*}
S_{\le r}^*E:=\mathrm{ker}(\partial^{r+1,*}\colon k[x_i,~i \in I] \rightarrow k[x_i,~i \in I]^{(I^{r+1})}).
\end{equation*}
\subsection{Summary of results}
In the next subsections, we will study $S_{ \le r}A$ depending on the characteristic of the field $k$ and the dimension of the vector space $A$. Recall that $E$ denotes 
a vector space with a basis $(e_i)_{i \in I}$. Whatever the characteristic of $k$ is, we have $\underset{0 \le i \le r}{\bigoplus}S^iA \subseteq S_{\le r}A$ 
(\cref{prop:inclusion}) and if $k$ is of characteristic $0$, we have $S_{\le r}A=\underset{0 \le i \le r}{\bigoplus}S^iA$ (\cref{prop:equality}).

However, if $k$ is of characteristic $p>0$, then $S_{\le r}A$ behaves differently. Suppose that $k$ is of characteristic $p>0$. We then have the following results. 
For all $r \ge 0$ and vector space $A$ of dimension $\ge 1$, we have $S_{\le r}A \neq \underset{0 \le i \le r}{\bigoplus}S^iA$ (\cref{dim-1-diff,dim-n-diff,dim-inf-diff}). 
We have $S_{\le r}A=SA$ for every $r \ge pn-1$ and vector space $A$ of finite dimension $n \ge 1$ (\cref{dim-1-finite,dim-n-finite}). However $S_{\le r}A \neq SA$ for 
all $r \ge 0$ and vector space $A$ of dimension $\infty$ (\cref{dim-inf-inf}). Finally, note that we explicitly compute $S_{\le r}^*E$ for every vector space $E$ with 
a basis made of a single element \cref{comp-dim-1}.
\subsection{Arbitrary characteristic}
In this section, $k$ is an arbitrary field. Let $r \ge 0$ and let $f \in k[x_i,~i \in I]$ be a polynomial of degree $\le r$. Write
\begin{equation*}
f=\underset{0 \le s \le r}{\sum}\,\underset{\alpha \in \mathcal{M}_s(I)}{\sum}a_\alpha x^\alpha.
\end{equation*}
Let $\alpha \in \mathcal{M}_s(I)$ where $0 \le s \le r$. Let $\beta \in \mathcal{M}_{r+1}(I)$. There exists $i \in I$ such that $\beta(i)>\alpha(i)$, else we 
would have $s=\underset{i \in I}{\sum}\alpha(i) \ge \underset{i \in I}{\sum}\beta(i)=r+1$ which is false. We thus have $\frac{\partial^{r+1}x^\alpha}{\partial x^\beta}=0$. 
We conclude that $\partial^{r+1,*}(x^\alpha)=0$. By linearity of $\partial^{r+1,*}$, we obtain that $\partial^{r+1,*}(f)=0$, that is, $f \in S_{\le r}^*(E)$.
\begin{proposition} \label{prop:inclusion}
Let $A$ be a $k$-vector space. We have $\underset{0 \le i \le r}{\bigoplus}S^iA \subseteq S_{\le r}A$.
\end{proposition}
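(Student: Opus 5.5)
To prove \cref{prop:inclusion}, the plan is to reduce to the basis computation done just before the statement, which handles $A=E$ with a chosen basis. Every $k$-vector space has a basis, so fix a basis $(e_i)_{i \in I}$ of $A$ and regard $A$ as $E$.

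Recall that $S_{\le r}A=\mathrm{ker}(\partial^{r+1})$. It therefore suffices to show that $\partial^{r+1}$ vanishes on each $S^iA$ with $0 \le i \le r$. The containment for the direct sum then follows by linearity of $\partial^{r+1}$.

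First, I transport the question through the isomorphisms $\phi\colon SE \simeq k[x_i,~i \in I]$ of \cref{iso-phi} and $\psi_{r+1}\colon SE \otimes E^{\otimes(r+1)} \simeq (k[x_i,~i\in I])^{(I^{r+1})}$. The commuting square defining $\partial^{r+1,*}$ gives $\partial^{r+1}=\phi;\partial^{r+1,*};\psi_{r+1}^{-1}$. Since $\psi_{r+1}$ is an isomorphism, $\mathrm{ker}(\partial^{r+1})=\phi^{-1}(\mathrm{ker}(\partial^{r+1,*}))=\phi^{-1}(S^*_{\le r}E)$.

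Second, $\phi$ sends the basis $(e_M)_{M \in \mathcal{M}_i(I)}$ of $S^iE$ to the monomials $x^M$ of degree $i$. Hence $\phi\big(\bigoplus_{0 \le i \le r}S^iE\big)$ is exactly the space of polynomials of degree $\le r$.

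Third, the paragraph preceding the statement shows that every polynomial $f$ of degree $\le r$ satisfies $\partial^{r+1,*}(f)=0$. The reason is that each $(r+1)$-th order partial derivative of a monomial $x^\alpha$ with $|\alpha| \le r$ vanishes, since some variable is differentiated more times than it appears. So every such $f$ lies in $S^*_{\le r}E$. Pulling back along $\phi$ gives $\bigoplus_{0\le i\le r}S^iE \subseteq \mathrm{ker}(\partial^{r+1})=S_{\le r}E$.

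The only point needing care is the identification $\mathrm{ker}(\partial^{r+1})=\phi^{-1}(\mathrm{ker}\,\partial^{r+1,*})$. That identification needs $\partial^{r+1,*}$, read through $\psi_{r+1}$, to really be the $(r+1)$-th iterated partial derivative. The paper asserts this formula, and I take it as given.

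If one wants to avoid bases entirely, there is an alternative route: show by induction on $j$ that $\partial^j$ sends $S^iA$ into $S^{i-j}A \otimes A^{\otimes j}$ (and to $0$ when $j>i$). This follows from $\partial^{j+1}=\partial^j \otimes 1_A;\partial$ read in $\mathsf{Vec}_k$, together with $\partial(S^0A)=0$ and $\partial(S^nA)\subseteq S^{n-1}A\otimes A$, both immediate from the defining formula. Taking $j=r+1>i$ then gives the result directly.
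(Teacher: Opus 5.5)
Your proposal is correct and is essentially the paper's proof. You choose a basis, identify $S_{\le r}A$ with $\phi^{-1}(S^*_{\le r}E)$, and pull back the fact that polynomials of degree $\le r$ lie in $\mathrm{ker}(\partial^{r+1,*})$, whose preimage under $\phi$ is exactly $\bigoplus_{0 \le i \le r}S^iA$.

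Your basis-free alternative is also sound: by induction, $\partial^j$ maps $S^iA$ into $S^{i-j}A \otimes A^{\otimes j}$ and kills $S^iA$ when $j>i$. It has the advantage of avoiding the explicit formula for $\partial^{r+1,*}$ as iterated partial derivatives. That formula is what your third step really relies on; the kernel identification needs only that $\psi_{r+1}$ is invertible.
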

\begin{proof}
Let $(e_i)_{i \in I}$ be a basis of $A$. Then $S_{\le r}A=\phi^{-1}(S_{\le r}^*A)$ where $\phi$ is the isomorphism \cref{iso-phi}. But the space of polynomials of 
degree $\le r$ is a subset of $S^*_{\le r}A$. It follows that the preimage of the space of polynomials of degree $\le r$ under $\phi$ is a subset of $S_{\le r}A$. 
But this preimage is exactly $\underset{0 \le i \le r}{\bigoplus}S^iA$.
\end{proof}
\subsection{Characteristic $0$}
In this subsection, $k$ is a field of characteristic $0$. We will show that in this case, $S_{\le r}E$ behaves as expected. We first note that Taylor expansion 
is valid in $k[x_i,~i \in I]$. If $r \ge 0$ and $\alpha \in \mathcal{M}_r(I)$, and if the only $i \in I$ such that $\alpha(i) \neq 0$ are $i_1,\dots,i_n$, then we define
\begin{equation*}
\frac{\partial^r f}{\partial x^\alpha}:=\frac{\partial^rf}{\partial x_{i_1}^{\alpha(i_1)}\cdots x_{i_n}^{\alpha(i_n)}}.
\end{equation*}
This is well-defined since the expression in the RHS does not depend on the order of $i_1,\dots,i_n$. Note that if 
$[\,]$ is the unique element in $\mathcal{M}_0(I)$, then we define $\frac{\partial^0f}{\partial x^{[\,]}}:=f$.
\begin{proposition} \label{Taylor}
For every $f \in k[x_i,~i \in I]$, we have
\begin{equation*}
f=\underset{r \ge 0}{\sum}\,\underset{\alpha \in \mathcal{M}_r(I)}{\sum}\frac{1}{\alpha !}\frac{\partial^rf}{\partial x^\alpha}(0)x^\alpha.
\end{equation*}
\end{proposition}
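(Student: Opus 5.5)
We prove the Taylor formula \cref{Taylor} by linearity: both sides are linear in $f$. The right-hand side is a well-defined, finitely supported sum, because for a polynomial $f$ only finitely many derivatives $\frac{\partial^r f}{\partial x^\alpha}$ are nonzero: it suffices to take $|\alpha|\le\deg f$ and $\alpha$ supported in the finitely many variables occurring in $f$. Since the monomials $x^\beta$, $\beta\in\mathcal{M}(I)$, form a basis of $k[x_i,~i\in I]$, it is enough to prove the identity for $f=x^\beta$ with $\beta\in\mathcal{M}_s(I)$ for some $s\ge 0$.

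For $f=x^\beta$, we first compute $\frac{\partial^r x^\beta}{\partial x^\alpha}$ for $\alpha\in\mathcal{M}_r(I)$. Differentiation in distinct variables acts independently on the corresponding factors of $x^\beta$, and $\frac{\partial^{a}}{\partial x_i^{a}}x_i^{b}=b(b-1)\cdots(b-a+1)\,x_i^{b-a}$, which vanishes when $a>b$. Therefore
\begin{equation*}
\frac{\partial^r x^\beta}{\partial x^\alpha}=
\left\{
\begin{aligned}
\beta^{\underline{\alpha}}\,x^{\beta-\alpha}~&\text{if }\alpha\le\beta, \\
0~&\text{else.}
\end{aligned}
\right.
\end{equation*}
Evaluating at $0$ (setting every $x_i=0$), the monomial $x^{\beta-\alpha}$ gives $1$ if $\beta-\alpha=[\,]$, that is if $\alpha=\beta$, and gives $0$ otherwise. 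So the only surviving term in the double sum is $r=s$, $\alpha=\beta$, with coefficient $\frac{1}{\beta!}\beta^{\underline{\beta}}=\frac{\beta!}{\beta!}=1$. The right-hand side is thus exactly $x^\beta$, which proves the formula. The division by $\beta!$ is legitimate because $k$ has characteristic $0$, so $\beta!\neq 0$ in $k$.

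There is no real obstacle here. The only point requiring care is the convention that $\frac{\partial^r}{\partial x^\alpha}$ is well defined, i.e. that partial derivatives in different variables commute. This follows by checking it on monomials and extending by linearity. With that in hand, the product formula above is a routine induction on $|\alpha|$.
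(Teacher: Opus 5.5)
Your proof is correct and follows essentially the same route as the paper. Both compute $\frac{\partial^r x^\beta}{\partial x^\alpha}=\beta^{\underline{\alpha}}x^{\beta-\alpha}$ when $\alpha\le\beta$ (and $0$ otherwise), evaluate at $0$ so that only $\alpha=\beta$ survives, and divide by $\beta^{\underline{\beta}}=\beta!$, which is invertible in characteristic $0$; the only cosmetic difference is that you reduce to a single monomial by linearity, whereas the paper differentiates the full expansion $f=\sum a_\beta x^\beta$ termwise and reads off $a_\alpha=\frac{1}{\alpha!}\frac{\partial^r f}{\partial x^\alpha}(0)$.
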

\begin{proof}
Write
\begin{equation*}
f=\underset{r \ge 0}{\sum}\,\underset{\beta \in \mathcal{M}_r(I)}{\sum}a_\beta x^\beta.
\end{equation*}
For every $\alpha \in \mathcal{M}(I)$, we have
\begin{equation*}
\frac{\partial^rf}{\partial x^\alpha}=\underset{s \ge r}{\sum}\underset{\substack{\beta \in \mathcal{M}_s(I) \\ \beta \ge \alpha}}{\sum}
a_\beta\beta^{\underline{\alpha}}x^{\beta-\alpha}
\end{equation*}
thus 
\begin{equation*}
\frac{\partial^rf}{\partial x^\alpha}(0)=a_\alpha\alpha^{\underline{\alpha}}=\alpha!a_\alpha
\end{equation*}
from which we get
\begin{equation*}
a_\alpha=\frac{1}{\alpha!}\frac{\partial^rf}{\partial x^\alpha}(0).\qedhere
\end{equation*}
\end{proof}
Let $r \ge 0$. If $\partial^{r+1,*}f=0$ then $\frac{\partial^{r+1}f}{\partial x^\alpha}=0$ for every $\alpha \in \mathcal{M}_{r+1}(I)$, and thus 
$\frac{\partial^sf}{\partial x^\alpha}=0$ for every $\alpha \in \mathcal{M}(I)$ such that $s \ge r+1$. It follows from \cref{Taylor} that $f$ is of degree $\le r$. Combining with \cref{prop:inclusion}, we obtain that for all $r \ge 0$ and $f \in k[x_i,~i \in I]$, we have $\partial^{r+1,*}f=0$ iff $f$ is of degree $\le r$.
\begin{proposition} \label{prop:equality}
Let $A$ be a $k$-vector space. We have $S_{\le r}A=\underset{0 \le i \le r}{\bigoplus}S^iA$.
\end{proposition}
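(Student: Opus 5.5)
We prove the two inclusions separately, after transporting everything to polynomials through a basis. The inclusion $\bigoplus_{0 \le i \le r}S^iA \subseteq S_{\le r}A$ is already \cref{prop:inclusion}, so only the reverse inclusion $S_{\le r}A \subseteq \bigoplus_{0 \le i \le r}S^iA$ remains.

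Choose a basis $(e_i)_{i \in I}$ of $A$, so that $A$ plays the role of $E$. Via the algebra isomorphism $\phi\colon SA \simeq k[x_i,~i \in I]$ of \cref{iso-phi} and the isomorphism $\psi_{r+1}$, the map $\partial^{r+1}$ corresponds to $\partial^{r+1,*}$. Hence $\phi$ restricts to an isomorphism $S_{\le r}A \simeq S^*_{\le r}A$, that is, $S_{\le r}A=\phi^{-1}(S^*_{\le r}A)$, as already used in the proof of \cref{prop:inclusion}.

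Take $f \in S^*_{\le r}A$, so that $\partial^{r+1,*}f=0$. This means $\frac{\partial^{r+1}f}{\partial x_{i_1}\cdots\partial x_{i_{r+1}}}=0$ for every $(i_1,\dots,i_{r+1}) \in I^{r+1}$. Any derivative $\frac{\partial^s f}{\partial x^\alpha}$ with $|\alpha|=s \ge r+1$ is a further derivative of one of these, so it also vanishes; in particular its value at $0$ is $0$. We now apply the Taylor formula of \cref{Taylor}, which is valid here because the characteristic is $0$ and so every $\alpha!$ is invertible. All terms with $|\alpha| \ge r+1$ drop out, so $f$ has degree $\le r$; this is the observation recorded just before the statement. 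Finally, the preimage under $\phi$ of the polynomials of degree $\le r$ is exactly $\bigoplus_{0 \le i \le r}S^iA$, since $\phi(e_M)=x^M$ and $\phi$ is the identity on $S^0A=k$. This gives $S_{\le r}A \subseteq \bigoplus_{0\le i\le r}S^iA$.

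The only delicate step is the identification $S_{\le r}A=\phi^{-1}(S^*_{\le r}A)$, i.e.\ that $\psi_{r+1}\circ\partial^{r+1}=\partial^{r+1,*}\circ\phi$, which is what lets the categorical kernel be computed with partial derivatives. This follows by induction on $r$ from the formula for $\partial^{*}$, using $\partial^{r+1}=\partial^r \otimes 1;\partial$ read in $\mathsf{Vec}_k$. The remaining steps are the Taylor argument above together with the basis description of $S^iA$.
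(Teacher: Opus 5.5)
Your proof is correct and follows the paper's route: you pass to $k[x_i,~i\in I]$ via $\phi$, use the Taylor formula of \cref{Taylor} (valid in characteristic $0$) to show that $\partial^{r+1,*}f=0$ forces $f$ to have degree $\le r$, combine this with \cref{prop:inclusion}, and pull back along $\phi$. You are slightly more explicit than the paper in noting that $S_{\le r}A=\phi^{-1}(S^*_{\le r}A)$ rests on the compatibility $\psi_{r+1}\circ\partial^{r+1}=\partial^{r+1,*}\circ\phi$ together with the injectivity of $\psi_{r+1}$; the paper simply asserts this identification.
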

\begin{proof}
Let $(e_i)_{i \in I}$ be a basis of $A$. Then $S_{\le r}A$ is the inverse image of the space of polynomials of degree $\le r$ in $k[x_i,~i \in I]$ under the isomorphism \cref{iso-phi}. But this inverse image is exactly $\underset{0 \le i \le r}{\bigoplus}S^iA$.
\end{proof}
\subsection{Characteristic $p$ - dimension $1$}
We will suppose from now on and until the end of this section that $k$ is a field of characteristic $p>0$ and show that $S_{\le r}^*E$ does not behave as expected. 
We suppose first that $E$ is of dimension $1$. We thus have $|I|=1$ and
\begin{equation*}
S_{\le r}^*E=\mathrm{ker}(\partial^{r+1,*}\colon k[x] \rightarrow k[x]).
\end{equation*}
where
\begin{equation*}
\partial^{r+1,*}f=f^{(r+1)}.
\end{equation*}
Let $f \in k[x]$ and write
\begin{equation*}
f=\underset{n \ge 0}{\sum}a_nx^n.
\end{equation*}
We have
\begin{equation*}
f^{(r+1)}=\underset{n \ge r+1}{\sum}a_nn(n-1)\dots(n-r)x^{n-(r+1)}.
\end{equation*}
It follows that
\begin{align*}
f^{(r+1)}=0 &\Leftrightarrow a_nn(n-1)\dots(n-r)=0\quad\forall n \ge r+1 \\
&\Leftrightarrow \Big(a_n=0 \text{ or }n(n-1)\dots(n-r)=0 \text{ in }k\Big)\quad\forall n \ge r+1 \\
&\Leftrightarrow \Big(a_n=0 \text{ or } n \in p\mathbb{N} \cup p\mathbb{N}+1 \cup \dots \cup p\mathbb{N}+r\Big)\quad \forall n \ge r+1 \\
&\Leftrightarrow \Big(a_n \neq 0 \Rightarrow n \in p\mathbb{N} \cup p\mathbb{N}+1 \cup \dots \cup p\mathbb{N}+r\Big) \quad\forall n \ge r+1 \\
&\Leftrightarrow f \in \mathrm{span}\Big(\{x^k~,0 \le k \le r\} \cup \underset{q \ge 0}{\bigcup}\{x^{pq},x^{pq+1},\dots,x^{pq+r}\}\Big).
\end{align*}
We conclude that
\begin{equation} \label{comp-dim-1}
S_{\le r}^*E=\mathrm{span}\Big(\{x^k,~0 \le k \le r\} \cup \underset{q \ge 0}{\bigcup}\{x^{pq},x^{pq+1},\dots,x^{pq+r}\}\Big).
\end{equation}

We thus see that $S^*_{\le r}E$ is not equal to the space of polynomials of degree $\le r$. In particular, it contains polynomials of arbitrary large degree for every 
$r \ge 0$ and is thus infinite-dimensional. We also see that $S_{\le r}^*E=k[x]$ for every $r \ge p-1$. We can conclude the following.
\begin{proposition} \label{dim-1-diff}
Let $A$ be a $k$-vector space of dimension $1$. For every $r \ge 0$, $S_{\le r}A \neq \underset{0 \le k \le r}{\bigoplus}S^kA$.
\end{proposition}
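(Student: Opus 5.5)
The claim follows directly from the computation \cref{comp-dim-1}. I will transport it back to $A$ through the isomorphism $\phi$ of \cref{iso-phi}.

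Let $A$ have dimension $1$, and choose a basis $(e)$, so that $A=E$ with $|I|=1$. As in the proof of \cref{prop:inclusion}, $S_{\le r}A=\phi^{-1}(S^*_{\le r}E)$. Under $\phi$, the subspace $\bigoplus_{0\le k\le r}S^kA$ corresponds exactly to the polynomials of degree $\le r$ in $k[x]$, that is, to $\mathrm{span}\{x^k,~0\le k\le r\}$. Since $\phi$ is a bijection, it suffices to show that $S^*_{\le r}E$ strictly contains the space of polynomials of degree $\le r$.

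For a witness I take a monomial $x^{pq}$ with $q\ge 1$ chosen so that $pq>r$; for instance $q=r+1$ works because $p\ge 2$. By \cref{comp-dim-1}, $x^{pq}\in S^*_{\le r}E$, since it is the $x^{pq}$ term in the block $\{x^{pq},\dots,x^{pq+r}\}$. The same fact can be checked directly: $(x^{pq})^{(r+1)}=pq(pq-1)\cdots(pq-r)\,x^{pq-r-1}$, and this vanishes because the factor $pq$ is $0$ in $k$.

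On the other hand, $x^{pq}$ has degree $pq>r$. Since the monomials form a basis of $k[x]$, the element $x^{pq}$ is not a polynomial of degree $\le r$. Hence $\phi^{-1}(x^{pq})=e^{\otimes_s pq}$ lies in $S_{\le r}A$ but not in $\bigoplus_{0\le k\le r}S^kA$, so the two subspaces differ.

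There is no real obstacle here. The only care needed is to choose the exponent $pq$ strictly larger than $r$, and to invoke the identification $S_{\le r}A\simeq S^*_{\le r}E$ together with the description of $\bigoplus_{i\le r}S^iA$ as the preimage of the polynomials of degree $\le r$, both already used in \cref{prop:inclusion}.
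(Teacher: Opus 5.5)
Your proof is correct and takes the same route as the paper. Both transport the question through the isomorphism $\phi$ and use the explicit description of $S^*_{\le r}E$ in dimension $1$ to see that it is strictly larger than $\mathrm{span}\{x^k,~0\le k\le r\}$. The only difference is that you name an explicit witness $x^{p(r+1)}$ and check directly that its $(r+1)$-th derivative vanishes, whereas the paper just notes that this kernel contains polynomials of arbitrarily large degree.
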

\begin{proof}
Choose a basis $(e_i)_{i \in I}$ of $A$. If we had $S_{\le r}A=\underset{0 \le k \le r}{\bigoplus}S^kA$, then we would have $S_{\le r}^*A=\mathrm{span}\{x^k,~0 \le k \le r\}$ 
which is false.
\end{proof}
\begin{proposition} \label{dim-1-finite}
Let $A$ be a $k$-vector space of dimension $1$. For every $r \ge p-1$, we have $S_{\le r}A=SA$.
\end{proposition}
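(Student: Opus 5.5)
We need to show that if $A$ has dimension $1$ and $r \ge p-1$, then $S_{\le r}A=SA$. The plan is to reduce to the polynomial computation \cref{comp-dim-1} through the isomorphism $\phi$ of \cref{iso-phi}, and then check that for $r \ge p-1$ the spanning set there contains every monomial.

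\textbf{Reduction.} Choose a basis $(e_i)_{i\in I}$ of $A$ with $|I|=1$, so that $A=E$. By construction, $S_{\le r}A=\mathrm{ker}(\partial^{r+1})$, and this kernel corresponds under the isomorphism $\phi\colon SE\simeq k[x]$ to $S^*_{\le r}E=\mathrm{ker}(\partial^{r+1,*})$. The reason is that $\psi_{r+1}$ is an isomorphism and the square relating $\partial^{r+1}$ and $\partial^{r+1,*}$ commutes, so $\phi$ maps one kernel onto the other. It therefore suffices to prove $S^*_{\le r}E=k[x]$.

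\textbf{Covering every monomial.} By \cref{comp-dim-1},
\begin{equation*}
S^*_{\le r}E=\mathrm{span}\Big(\{x^k,~0\le k\le r\}\cup\underset{q\ge 0}{\bigcup}\{x^{pq},\dots,x^{pq+r}\}\Big).
\end{equation*}
Let $n\ge 0$ and use Euclidean division to write $n=pq+j$ with $q\ge 0$ and $0\le j\le p-1$. Since $r\ge p-1$, we have $j\le r$, so $x^n=x^{pq+j}$ belongs to the set $\{x^{pq},\dots,x^{pq+r}\}$. Hence every monomial lies in $S^*_{\le r}E$, and so $S^*_{\le r}E=k[x]$.

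\textbf{Conclusion.} Pulling back along $\phi$ gives $S_{\le r}A=\phi^{-1}(k[x])=SA$. This is a subspace equality inside $SA$, because $S_{\le r}A$ is a kernel of a map out of $SA$.

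There is no genuine obstacle. The only point needing care is the identification $S_{\le r}A=\phi^{-1}(S^*_{\le r}E)$, which rests on $\psi_{r+1}$ being an isomorphism. The same reasoning was already used implicitly in \cref{prop:inclusion}.
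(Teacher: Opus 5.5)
Your proof is correct and follows the paper's route. You transport the question through $\phi$ to $S^*_{\le r}E=\mathrm{ker}(\partial^{r+1,*})$ and read off from \cref{comp-dim-1} that this is all of $k[x]$ once $r \ge p-1$; the only difference is that you make explicit the Euclidean-division step ($n=pq+j$ with $j\le p-1\le r$) and the role of $\psi_{r+1}$ being an isomorphism, both of which the paper leaves implicit.
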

\begin{proof}
Choose a basis $(e_i)_{i \in I}$ of $A$. We have $S_{\le r}^*A=k[x]$, thus $S_{\le r}A=SA$.
\end{proof}

We will see in the next subsection that these propositions extend to any finite dimension $n \ge 1$.
\subsection{Characteristic $p$ - finite dimension $n \ge 1$}
We suppose now that $E$ is of finite dimension $n \ge 1$. We thus have $|I|=n$. Hence $I \simeq \{1,\dots,n\}$. Up to a choice of a bijection, we can identity $I$ 
to $\{1,\dots,n\}$. We have
\begin{equation*}
S_{\le r}^*E=\mathrm{ker}(\partial^{r+1,*}\colon k[x_1,\dots,x_n] \rightarrow k[x_1,\dots,x_n]^{(\{1,\dots,n\}^{r+1})})
\end{equation*}
where
\begin{equation*}
(\partial^{r+1,*}f)(i_1,\dots,i_{r+1})=\frac{\partial^{r+1}f}{\partial x_{i_1}\dots\partial x_{i_{r+1}}}.
\end{equation*}
Let $r \ge pn-1$. We will prove that $\partial^{r+1,*}x_M=0$ for every $M \in \mathcal{M}(\{1,\dots,n\})$ which will imply that $S_{\le r}^*E=SE$.

Let $M \in \mathcal{M}(\{1,\dots,n\})$. Let $(i_1,\dots,i_{r+1}) \in \{1,\dots,n\}^{r+1}$. For every $s \in \{1,\dots,n\}$, write
\begin{equation*}
\alpha_s=|\{k \in \{1,\dots,r+1\}~\text{s.t.}~i_k=s\}|.
\end{equation*}
We have
\begin{equation*}
\alpha_1+\dots+\alpha_n=r+1 \ge pn.
\end{equation*}
Thus, there exists $t \in \{1,\dots,n\}$ such that $\alpha_t \ge p$. Else, we would have
\begin{equation*}
r+1=\alpha_1+\dots+\alpha_n<pn
\end{equation*}
which is false. We have
\begin{equation*}
(\partial^{r+1,*}x^M)(i_1,\dots,i_{r+1})=\frac{\partial^{r+1}x^M}{\partial x_{i_1}\dots\partial x_{i_{r+1}}}=
\frac{\partial^{r+1}(x_1^{M(1)}\dots x_n^{M(n)})}{\partial x_1^{\alpha_1}\dots x_n^{\alpha_n}}.
\end{equation*}
This polynomial is equal to $0$ if $\alpha_t>M(t)$. Else it is of the form
\begin{equation*}
M(t)(M(t)-1)\dots(M(t)-(\alpha_t-1))x_t^{M(t)-\alpha_t}g
\end{equation*}
for some $g \in k[x_1,\dots,x_n]$. But any element of the form $a(a-1)\dots(a-(b-1))$ where $a \in \mathbb{N}$ and $b \ge p$ is equal to $0$ in a field 
of characteristic $p$ since at least one of $a$, $a-1$, \dots, $a-(b-1)$ is divisible by $p$. Thus $(\partial^{r+1,*}x_M)(i_1,\dots,i_{r+1})$ is also 
equal to $0$ if $\alpha_t \le M(t)$. We conclude that $(\partial^{r+1,*}x_M)(i_1,\dots,i_{r+1})=0$ for any $(i_1,\dots,i_{r+1}) \in \{1,\dots,n\}^{r+1}$, 
thus $\partial^{r+1,*}x_M=0$.

We obtain the following.
\begin{proposition} \label{dim-n-finite}
Let $A$ be a $k$-vector space of finite dimension $n \ge 1$. For every $r \ge pn-1$, we have $S_{\le r}A=SA$.
\end{proposition}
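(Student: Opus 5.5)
The statement to prove is \cref{dim-n-finite}. The plan is to transport the computation just carried out for polynomials back to $SA$ through a basis, in the same way as \cref{dim-1-finite} was deduced from \cref{comp-dim-1}.

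First, choose a basis $(e_i)_{i \in I}$ of $A$. Since $A$ has dimension $n$, we have $|I|=n$, and we identify $I$ with $\{1,\dots,n\}$. With $E=A$, the isomorphisms $\phi\colon SA \simeq k[x_1,\dots,x_n]$ and $\psi_{r+1}$ make the square relating $\partial^{r+1}$ and $\partial^{r+1,*}$ commute. Because $\phi$ and $\psi_{r+1}$ are isomorphisms, this gives
\begin{equation*}
S_{\le r}A=\mathrm{ker}(\partial^{r+1})=\phi^{-1}\big(\mathrm{ker}(\partial^{r+1,*})\big)=\phi^{-1}(S^*_{\le r}A).
\end{equation*}
So it suffices to show that $S^*_{\le r}A=k[x_1,\dots,x_n]$.

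Second, use the computation preceding the statement. For $r \ge pn-1$, every monomial $x^M$ satisfies $(\partial^{r+1,*}x^M)(i_1,\dots,i_{r+1})=0$ for every tuple $(i_1,\dots,i_{r+1})$, because:
\begin{itemize}
\item by pigeonhole, some variable $x_t$ is differentiated $\alpha_t \ge p$ times;
\item the resulting coefficient is either $0$ (if $\alpha_t>M(t)$) or the falling factorial $M(t)(M(t)-1)\cdots(M(t)-(\alpha_t-1))$, which is a product of at least $p$ consecutive integers and hence vanishes in characteristic $p$.
\end{itemize}
Hence $\partial^{r+1,*}x^M=0$ for every $M$. The monomials $x^M$ form a basis of $k[x_1,\dots,x_n]$ and $\partial^{r+1,*}$ is linear, so $\partial^{r+1,*}=0$. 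Therefore $S^*_{\le r}A=k[x_1,\dots,x_n]$, and pulling back along $\phi$ gives $S_{\le r}A=SA$.

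No step here is genuinely hard. The only point needing care is checking that the diagram defining $\partial^{r+1,*}$ really identifies $\mathrm{ker}(\partial^{r+1})$ with $\phi^{-1}(\mathrm{ker}\,\partial^{r+1,*})$. This holds because $\psi_{r+1}$ is injective. (The diagram in the paper is labelled with $\psi_1$, but it should be $\psi_{r+1}$.)
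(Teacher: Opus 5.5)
Your proposal is correct and follows the paper's argument: you identify $S_{\le r}A$ with $\phi^{-1}(S^*_{\le r}A)$ via a basis, then use pigeonhole (some $\alpha_t \ge p$) and the vanishing in characteristic $p$ of a product of at least $p$ consecutive integers to get $\partial^{r+1,*}x^M=0$ for every monomial, hence $S^*_{\le r}A=k[x_1,\dots,x_n]$. Your explicit check that the kernels correspond because $\psi_{r+1}$ is injective, and your correction of the $\psi_1$ label to $\psi_{r+1}$, fill in a step the paper leaves implicit.
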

\begin{proof}
Choose a basis $(e_1,\dots,e_n)$ of $A$. We have $S^*_{\le r}A=k[x_1,\dots,x_n]$, thus $S_{\le r}A=SA$.
\end{proof}
We will not compute a basis for $S_{\le r}A$ as in the case of dimension $1$. But we still have the following.
\begin{proposition} \label{dim-n-diff}
Let $A$ be a $k$-vector space of finite dimension $n \ge 1$. For every $r \ge 0$, we have $S_{\le r}A \neq \underset{0 \le k \le r}{\bigoplus}S^kA$.
\end{proposition}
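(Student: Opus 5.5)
The plan is to exhibit, for each $r \ge 0$, an explicit element of $S_{\le r}A$ that does not lie in $\underset{0 \le k \le r}{\bigoplus}S^kA$. As in the proof of \cref{dim-1-diff}, I will choose a basis $(e_1,\dots,e_n)$ of $A$ and transport everything through the isomorphism $\phi$ of \cref{iso-phi}. Under $\phi$, $S_{\le r}A$ corresponds to $S^*_{\le r}A$, and $\underset{0 \le k \le r}{\bigoplus}S^kA$ corresponds to the space of polynomials of total degree $\le r$ in $k[x_1,\dots,x_n]$. It therefore suffices to find a polynomial $f$ of total degree $> r$ such that all $(r+1)$-th order partial derivatives of $f$ vanish.

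The candidate I will use is $f = x_1^{pq}$, with $q \ge 1$ chosen so that $pq > r$; for instance $q = r+1$ works. This is the one-variable phenomenon from \cref{comp-dim-1}, placed in the first coordinate. For any $(i_1,\dots,i_{r+1}) \in \{1,\dots,n\}^{r+1}$, the derivative $\frac{\partial^{r+1} f}{\partial x_{i_1}\dots\partial x_{i_{r+1}}}$ is zero as soon as some $i_k \neq 1$, since $f$ depends only on $x_1$. In the remaining case all $i_k = 1$, and the derivative is $f^{(r+1)} = pq(pq-1)\cdots(pq-r)\,x_1^{pq-r-1}$. The leading factor $pq$ is divisible by $p$, so this coefficient is $0$ in $k$. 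Hence $\partial^{r+1,*} f = 0$, so $f \in S^*_{\le r}A$, while $\deg f = pq > r$.

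To finish, I pull $f$ back along $\phi$: the element $e_1^{\otimes_s pq} \in S^{pq}A$ lies in $S_{\le r}A$. It is a nonzero element of $S^{pq}A$ with $pq > r$, so it does not belong to $\underset{0 \le k \le r}{\bigoplus}S^kA$, because the sum $SA = \bigoplus_i S^iA$ is direct. This gives $S_{\le r}A \neq \underset{0 \le k \le r}{\bigoplus}S^kA$.

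There is no real obstacle in this argument. The only point needing care is the identification of $S_{\le r}A$ with $\phi^{-1}(S^*_{\le r}A)$, which rests on $\psi_{r+1}$ being an isomorphism, a fact already used implicitly in \cref{prop:inclusion}.
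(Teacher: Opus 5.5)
\textbf{Verdict.} Your proof is correct. It uses the same witness as the paper, a power $x_1^{pm}$ transported through $\phi$, but both the membership check and the final step are organized differently.

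\textbf{The paper's route.} The paper checks only first derivatives. It shows $x_1^{pi}\in\ker(\partial^{*})=S^*_{\le 0}A\subseteq S^*_{\le r}A$ for every $i\ge 0$. It then concludes by a dimension count: $S_{\le r}A$ is infinite-dimensional, whereas $\bigoplus_{0\le k\le r}S^kA$ is finite-dimensional because $\dim A<\infty$.

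\textbf{Your route.} You compute the $(r+1)$-th partial derivatives directly; the falling factorial $pq(pq-1)\cdots(pq-r)$ vanishes because of its first factor. You then finish with a single element $e_1^{\otimes_s pq}$ of degree $pq>r$, using only that $SA=\bigoplus_i S^iA$ is a direct sum.

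\textbf{What each approach buys.} Your ending never uses that $A$ is finite-dimensional. The same argument therefore proves the statement for every nonzero $A$, covering \cref{dim-1-diff} and \cref{dim-inf-diff} at once; it is essentially the argument the paper itself uses for \cref{dim-inf-diff}. The paper's route yields slightly more information: already $S_{\le 0}A$ is infinite-dimensional, and the inclusion $S_{\le 0}A\subseteq S_{\le r}A$ then propagates this to every $r$. You can recover the same infinite-dimensionality by letting $q$ range over all integers with $pq>r$.
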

\begin{proof}
Choose a basis $(e_1,\dots,e_n)$ of $A$. We have $S^*_{\le r}A=k[x_1,\dots,x_n]$. For every $i \ge 0$, we have $x_1^{pi} \in S^*_{\le r}A$. Indeed 
$\partial^*(x_1^{0i})(1)=\partial^*(1)(1)=\frac{\partial 1}{\partial x_1}=0$ and for every $i \ge 1$, $\partial^*(x_1^{pi})(1)=pix_1^{pi-1}=0$. 
Moreover for every $i \ge 0$, we have $\partial^*(x_1^{pi})(j)=\frac{\partial x_1^{pi}}{\partial x_j}=0$ for every $j \in \{2,\dots,n\}$. 
Thus $\partial^*(x_1^{pi})=0$ for every $i \ge 0$. We thus have $x_1^{pi} \in \mathrm{ker}(\partial^*)=S_{\le 0}^*A \subseteq \mathrm{ker}(\partial^{r+1,*}) = S_{\le r}^*A$ 
for every $r \ge 0$. Thus $S_{\le r}^*A$ is of dimension $\infty$, hence $S_{\le r}A$ is of dimension $\infty$. It follows that 
$S_{\le r}A \neq \underset{0 \le k \le r}{\bigoplus}S^kA$ since the RHS is of finite dimension.
\end{proof}
\subsection{Characteristic $p$ - dimension $\infty$}
We suppose now that $E$ is of dimension $\infty$. We thus have $|I|=\infty$. It follows that there exists an injection $\iota\colon \mathbb{N} \rightarrow I$. Recall that
\begin{equation*}
S_{\le r}^*E=\mathrm{ker}(\partial^{r+1,*}\colon k[x_i,~i \in I] \rightarrow k[x_i,~i \in I]^{(I^{r+1})}).
\end{equation*}
Let $r \ge 0$ and consider the polynomial
\begin{equation*}
x_{\iota(1)}\dots x_{\iota(r+1)} \in k[x_i,~i \in I].
\end{equation*}
We have
\begin{equation*}
(\partial^{r+1,*}x_{\iota(1)}\dots x_{\iota(r+1)})(\iota(1),\dots,\iota(r+1))=\frac{\partial^{r+1} x_{\iota(1)}
\dots x_{\iota(r+1)}}{\partial x_{\iota(1)}\dots x_{\iota(r+1)}}=1 \neq 0.
\end{equation*}
It follows that 
\begin{equation*}
\partial^{r+1,*}x_{\iota(1)}\dots x_{\iota(r+1)} \neq 0.
\end{equation*}
Thus $S^*_{\le r}E \neq k[x_i,~i \in I]$ from which we obtain that $S_{\le r}E \neq SE$. By choosing a basis of $A$, we obtain the next proposition.
\begin{proposition} \label{dim-inf-inf}
Let $A$ be a $k$-vector space of dimension $\infty$. For every $r \ge 0$, we have $S_{\le r}A \neq SA$.
\end{proposition}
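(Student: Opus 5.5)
We have to show that for a $k$-vector space $A$ of infinite dimension and every $r \ge 0$, $S_{\le r}A \neq SA$. The plan is to pass to the polynomial picture via a basis and exhibit one explicit polynomial that is not killed by $\partial^{r+1,*}$, exactly as in the computation just before the statement.

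\textbf{Step 1: reduce to polynomials.} Choose a basis $(e_i)_{i \in I}$ of $A$, so that we may take $E=A$ with $|I|$ infinite, and fix an injection $\iota\colon \mathbb{N} \rightarrow I$. Under the isomorphism $\phi\colon SA \simeq k[x_i,~i \in I]$ of \cref{iso-phi}, $S_{\le r}A=\ker(\partial^{r+1})$ corresponds to $S^*_{\le r}A=\ker(\partial^{r+1,*})$. This uses the defining square $\partial^{r+1};\psi_{r+1}=\phi;\partial^{r+1,*}$ together with the fact that $\psi_{r+1}$ is an isomorphism. Hence $S_{\le r}A=SA$ holds if and only if $\partial^{r+1,*}$ vanishes on all of $k[x_i,~i\in I]$.

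\textbf{Step 2: exhibit a witness.} Consider the squarefree monomial $g=x_{\iota(1)}\cdots x_{\iota(r+1)}$. The variables are distinct because $\iota$ is injective, and there are enough of them because $I$ is infinite. Evaluate $\partial^{r+1,*}g$ at the index tuple $(\iota(1),\dots,\iota(r+1))$. Differentiating once in each distinct variable gives
\begin{equation*}
\frac{\partial^{r+1}g}{\partial x_{\iota(1)}\cdots\partial x_{\iota(r+1)}}=1.
\end{equation*}
No factorial coefficients appear, since each exponent is $1$, so the characteristic $p$ plays no role. As $1\neq 0$ in $k$, we get $\partial^{r+1,*}g\neq 0$.

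\textbf{Step 3: conclude.} By Step 2, $g\notin S^*_{\le r}A$, so $S^*_{\le r}A\neq k[x_i,~i\in I]$. Transporting back along $\phi$ gives $\phi^{-1}(g)\in SA\setminus S_{\le r}A$, hence $S_{\le r}A\neq SA$.

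\textbf{Main obstacle.} The only delicate point is Step 1: justifying that $\partial^{r+1,*}$ really computes iterated partial derivatives, i.e.\ that $\partial^{r+1}$ transported along $\phi$ and $\psi_{r+1}$ sends $f$ to the family $\bigl(\partial^{r+1}f/\partial x_{i_1}\cdots\partial x_{i_{r+1}}\bigr)$. This is asserted earlier in the paper and may be assumed. If one wanted it checked directly, an induction on $r$ using $\partial^{r+1}=\partial^r\otimes 1;\partial$ (in the opposite category) and the formula for $\partial^*$ on the basis $e_M$ would suffice.
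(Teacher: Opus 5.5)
Your proposal is correct and follows the paper's own argument. You choose a basis and an injection $\iota$, then observe that the squarefree monomial $x_{\iota(1)}\cdots x_{\iota(r+1)}$ has $(r+1)$-th mixed partial $1\neq 0$ at $(\iota(1),\dots,\iota(r+1))$, so it lies outside $S^*_{\le r}A$, and you transport this back along $\phi$ to get $S_{\le r}A\neq SA$.
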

But we still have the following.
\begin{proposition} \label{dim-inf-diff}
Let $A$ be a $k$-vector space of dimension $\infty$. For every $r \ge 0$, we have $S_{\le r}A \neq \underset{0 \le k \le r}{\bigoplus}S^kA$.
\end{proposition}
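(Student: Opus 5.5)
The plan is to argue by dimension. The previous subsection showed that $S_{\le r}A$ is infinite-dimensional whenever $A$ has finite dimension $n \ge 1$. Here we instead exhibit an explicit element of $S_{\le r}A$ that does not lie in $\bigoplus_{0 \le k \le r}S^kA$.

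Choose a basis $(e_i)_{i \in I}$ of $A$ and an injection $\iota\colon \mathbb{N} \rightarrow I$, as in the proof of \cref{dim-inf-inf}. Through the isomorphism \cref{iso-phi} we have $S_{\le r}A=\phi^{-1}(S^*_{\le r}A)$. Moreover, $\phi^{-1}$ of the polynomials of degree $\le r$ is exactly $\bigoplus_{0 \le k \le r}S^kA$; this is the identification already used in \cref{prop:inclusion}. It therefore suffices to find a polynomial $f \in S^*_{\le r}E$ of degree $>r$.

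The candidate is $f=x_{\iota(0)}^{p}$, or, if one wants degree above $r$ for every $r$, $f=x_{\iota(0)}^{pq}$ with $pq>r$. First compute $\partial^* f$: the component at $\iota(0)$ is $pq\,x_{\iota(0)}^{pq-1}=0$ in characteristic $p$, and every other component is $0$ because $f$ involves only the variable $x_{\iota(0)}$. Hence $f \in \ker(\partial^{1,*})=S^*_{\le 0}E$.

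Next, $\ker(\partial^{1,*}) \subseteq \ker(\partial^{r+1,*})$. The cleanest route is the categorical one: \cref{d-k-l} gives $\partial^{r+1}=\partial^{r}\otimes 1;\partial$, read in $\mathsf{Vec}_k^{\mathrm{op}}$, so anything killed by $\partial$ is killed by $\partial^{r+1}$. Concretely, every $(r+1)$-th partial derivative of $f$ factors through a first partial derivative, which is $0$. Thus $f \in S^*_{\le r}E$. Since $\deg f=pq>r$, $f$ is not in the span of monomials of degree $\le r$, and so $\phi^{-1}(f) \in S_{\le r}A \setminus \bigoplus_{0\le k\le r}S^kA$.

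There is no real obstacle. The only point needing care is the bookkeeping of identifications: $S_{\le r}A$ is defined as a kernel in $SA$, and one must check that $\phi$ carries $S^kA$ onto the homogeneous polynomials of degree $k$, so that the direct sum corresponds to polynomials of degree $\le r$. The finite-dimension argument cannot be reused, since both sides are infinite-dimensional here; this is why a degree witness replaces the dimension count.
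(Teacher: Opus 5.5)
Your proof is correct and matches the paper's argument. The paper also picks a basis and an injection $\iota\colon\mathbb{N}\rightarrow I$, and notes, as in the finite-dimensional case, that $x_{\iota(1)}^{pi}\in\ker(\partial^{*})=S^*_{\le 0}A\subseteq S^*_{\le r}A$ for every $i\ge 0$, so $S_{\le r}A$ contains nonzero elements of $S^{pi}A$ of degree exceeding $r$. The only difference is that you justify the inclusion $\ker(\partial^{1,*})\subseteq\ker(\partial^{r+1,*})$ via $\partial^{r+1}=\partial^{r}\otimes 1_A;\partial$ read in $\mathsf{Vec}_k$, which the paper leaves implicit.
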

\begin{proof}
Choose a basis $(e_i)_{i \in I}$ of $A$ and let $\iota\colon\mathbb{N} \rightarrow I$ be an injection. As for the case of finite dimension, we have 
$x_{\iota(1)}^{pi} \in S_{\le r}^*A$ for every $i \ge 0$. It follows that $S_{\le r}A$ contains nonzero elements from $S^{pi}A$ for every $i \ge 0$. 
Thus $S_{\le r}A \neq \underset{0 \le k \le r}{\bigoplus}S^kA$.
\end{proof}
\bibliographystyle{plainurl}
\bibliography{biblio-extracting}
\appendix
\section{Deferred proofs} \label{sec:appendix}
\paragraph{Proof of \cref{id3}:} \label{proof:higer-order-product} The proof is by induction on $n \ge 0$.
\begin{itemize}
\item Base case ($n=0$): We have $\partial^0;\Delta=1_{\oc A};\Delta=\Delta$ and the RHS is equal to $(\Delta \otimes 1_I);(1_{\oc A} \otimes \gamma_{\oc A, I} 
\otimes 1_I);\partial^0 \otimes \partial^0=\Delta;\partial^0 \otimes \partial^0=\Delta;1_{\oc A} \otimes 1_{\oc A}=\Delta$, hence the equality.
\item Induction step: Suppose that the identity holds for some $n \ge 0$. We show that it holds for $n+1$. We compute:
\begin{align*}
&\partial^{n+1};\Delta \\
&=\partial^n \otimes 1_{A};\partial;\Delta &\text{\scriptsize{\cref{def-d-ind-2}}} \\
&=\partial^n \otimes 1_{A};(\Delta \otimes 1_A;1_{\oc A} \otimes \partial + \Delta \otimes 1_A;1_{\oc A} \otimes \gamma_{\oc A,A};
\partial \otimes 1_{\oc A}) &\text{\scriptsize{\cref{def-d2}}} \\
&=\partial^n \otimes 1_{A};\Delta \otimes 1_A;1_{\oc A} \otimes \partial + \partial^n \otimes 1_{A};\Delta \otimes 1_A;1_{\oc A} \otimes 
\gamma_{\oc A,A};\partial \otimes 1_{\oc A} &\text{\scriptsize{\cref{add-4}}} \\
&=(\partial^n;\Delta) \otimes 1_{A};1_{\oc A} \otimes \partial + (\partial^n;\Delta) \otimes 1_{A};1_{\oc A} \otimes \gamma_{\oc A,A};\partial 
\otimes 1_{\oc A} &\text{\scriptsize{func.~of $- \otimes 1_A$}} \\
&=\Big(\underset{0 \le k \le n}{\sum}\Delta \otimes \mathsf{unsh}(n,k)_A;1_{\oc A} \otimes \gamma_{\oc A, A^{\otimes k}} \otimes 1_{A^{\otimes(n-k)}};\partial^k 
\otimes \partial^{n-k}\Big) \otimes 1_{A};1_{\oc A} \otimes \partial &\text{\scriptsize{induction hyp.}} \\
&~+\Big(\underset{0 \le k \le n}{\sum}\Delta \otimes \mathsf{unsh}(n,k)_A;1_{\oc A} \otimes \gamma_{\oc A, A^{\otimes k}} \otimes 1_{A^{\otimes(n-k)}};~\partial^k 
\otimes \partial^{n-k}\Big) \otimes 1_{A}; \\
&~1_{\oc A} \otimes \gamma_{\oc A,A};\partial \otimes 1_{\oc A} \\
&=\underset{0 \le k \le n}{\sum}\Delta \otimes \mathsf{unsh}(n,k)_A \otimes 1_A;1_{\oc A} \otimes \gamma_{\oc A, A^{\otimes k}} \otimes 1_{A^{\otimes(n+1-k)}};\partial^k 
\otimes \partial^{n-k} \otimes 1_A;1_{\oc A} \otimes \partial &\text{\scriptsize{func.~of $- \otimes 1_A$}} \\
&~+\underset{0 \le k \le n}{\sum}\Delta \otimes \mathsf{unsh}(n,k)_A \otimes 1_A;1_{\oc A} \otimes \gamma_{\oc A, A^{\otimes k}} \otimes 1_{A^{\otimes(n+1-k)}};\partial^k 
\otimes \partial^{n-k} \otimes 1_A; \\
&~1_{\oc A} \otimes \gamma_{\oc A,A};\partial \otimes 1_{\oc A} \\
&=\underset{0 \le k \le n}{\sum}\Delta \otimes \mathsf{unsh}(n,k)_A \otimes 1_A;1_{\oc A} \otimes \gamma_{\oc A, A^{\otimes k}} \otimes 1_{A^{\otimes(n+1-k)}};\partial^k 
\otimes (\partial^{n-k} \otimes 1_A;\partial) &\text{\scriptsize{func.~of $- \otimes -$}} \\
&~+\underset{0 \le k \le n}{\sum}\Delta \otimes \mathsf{unsh}(n,k)_A \otimes 1_A;1_{\oc A} \otimes \gamma_{\oc A, A^{\otimes k}} \otimes 1_{A^{\otimes(n+1-k)}}; \\
&~\partial^k \otimes (\partial^{n-k} \otimes 1_A;\gamma_{\oc A,A});\partial \otimes 1_{\oc A}  \\
&=\underset{0 \le k \le n}{\sum}\Delta \otimes \mathsf{unsh}(n,k)_A \otimes 1_A;1_{\oc A} \otimes \gamma_{\oc A, A^{\otimes k}} \otimes 1_{A^{\otimes(n+1-k)}};\partial^k 
\otimes \partial^{n+1-k} &\text{\scriptsize{\cref{def-d-ind-2}}} \\
&~+\underset{0 \le k \le n}{\sum}\Delta \otimes \mathsf{unsh}(n,k)_A \otimes 1_A;1_{\oc A} \otimes \gamma_{\oc A, A^{\otimes k}} \otimes 1_{A^{\otimes(n+1-k)}}; \\
&~1_{\oc A \otimes A^{\otimes k}} \otimes (\partial^{n-k} \otimes 1_A;\gamma_{\oc A,A});(\partial^k \otimes 1_{A \otimes \oc A});(\partial \otimes 1_{\oc A}) 
&\text{\scriptsize{func.~of $-\otimes -$}} \\
&=\underset{0 \le k \le n}{\sum}\Delta \otimes \mathsf{unsh}(n,k)_A \otimes 1_A;1_{\oc A} \otimes \gamma_{\oc A, A^{\otimes k}} \otimes 1_{A^{\otimes(n+1-k)}};\partial^k 
\otimes \partial^{n+1-k}\\
&~+ \underset{0 \le k \le n}{\sum}\Delta \otimes \mathsf{unsh}(n,k)_A \otimes 1_A;1_{\oc A} \otimes \gamma_{\oc A, A^{\otimes k}} \otimes 1_{A^{\otimes(n+1-k)}};\\
&~1_{\oc A \otimes A^{\otimes k}} \otimes (\partial^{n-k} \otimes 1_A;\gamma_{\oc A,A});(\partial^k \otimes 1_{A} \otimes 1_{\oc A});(\partial \otimes 1_{\oc A}) 
&\text{\scriptsize{func.~of $-\otimes -$}} \\
&=\underset{0 \le k \le n}{\sum}\Delta \otimes \mathsf{unsh}(n,k)_A \otimes 1_A;1_{\oc A} \otimes \gamma_{\oc A, A^{\otimes k}} \otimes 1_{A^{\otimes(n+1-k)}};\partial^k 
\otimes \partial^{n+1-k}\\
&~+ \underset{0 \le k \le n}{\sum}\Delta \otimes \mathsf{unsh}(n,k)_A \otimes 1_A;1_{\oc A} \otimes \gamma_{\oc A, A^{\otimes k}} \otimes 1_{A^{\otimes(n+1-k)}};\\
&~1_{\oc A \otimes A^{\otimes k}} \otimes (\partial^{n-k} \otimes 1_A;\gamma_{\oc A,A});\partial^{k+1} \otimes 1_{\oc A} &\text{\scriptsize{\cref{def-d-ind-2}}} \\
&=\underset{0 \le k \le n}{\sum}\Delta \otimes \mathsf{unsh}(n,k)_A \otimes 1_A;1_{\oc A} \otimes \gamma_{\oc A, A^{\otimes k}} \otimes 1_{A^{\otimes(n+1-k)}};\partial^k 
\otimes \partial^{n+1-k}\\
&~+ \underset{0 \le k \le n}{\sum}\Delta \otimes \mathsf{unsh}(n,k)_A \otimes 1_A;1_{\oc A} \otimes \gamma_{\oc A, A^{\otimes k}} \otimes 1_{A^{\otimes(n+1-k)}};\\
&~1_{\oc A \otimes A^{\otimes k}} \otimes (\gamma_{\oc A \otimes A^{\otimes(n-k)}, A};1_A \otimes \partial^{n-k});\partial^{k+1} \otimes 1_{\oc A} 
&\text{\scriptsize{nat.~of $\gamma$}} \\
&=\underset{0 \le k \le n}{\sum}\Delta \otimes \mathsf{unsh}(n,k)_A \otimes 1_A;1_{\oc A} \otimes \gamma_{\oc A, A^{\otimes k}} \otimes 1_{A^{\otimes(n+1-k)}};\partial^k 
\otimes \partial^{n+1-k} \\
&~+\underset{0 \le k \le n}{\sum}\Delta \otimes \mathsf{unsh}(n,k)_A \otimes 1_A;1_{\oc A} \otimes \gamma_{\oc A, A^{\otimes k}} \otimes 1_{A^{\otimes(n+1-k)}};\\
&~1_{\oc A \otimes A^{\otimes k}} \otimes \gamma_{\oc A \otimes A^{\otimes(n-k)}, A};1_{\oc A \otimes A^{\otimes (k+1)}}\otimes \partial^{n-k};\partial^{k+1} 
\otimes 1_{\oc A} &\text{\scriptsize{func.~of $1_{\oc A \otimes A^{\otimes k}}$}} \\
&=\underset{0 \le k \le n}{\sum}\Delta \otimes \mathsf{unsh}(n,k)_A \otimes 1_A;1_{\oc A} \otimes \gamma_{\oc A, A^{\otimes k}} \otimes 1_{A^{\otimes(n+1-k)}};\partial^k 
\otimes \partial^{n+1-k} \\
&~+\underset{0 \le k \le n}{\sum}\Delta \otimes \mathsf{unsh}(n,k)_A \otimes 1_A;1_{\oc A} \otimes \gamma_{\oc A, A^{\otimes k}} \otimes 1_{A^{\otimes(n+1-k)}}; \\
&~1_{\oc A \otimes A^{\otimes k}} \otimes \gamma_{\oc A \otimes A^{\otimes(n-k)}, A};\partial^{k+1} \otimes \partial^{n-k}. &\text{\scriptsize{func.~of $-\otimes -$}} \\
&=\underset{0 \le k \le n}{\sum}\Delta \otimes \mathsf{unsh}(n,k)_A \otimes 1_A;1_{\oc A} \otimes \gamma_{\oc A, A^{\otimes k}} \otimes 1_{A^{\otimes(n+1-k)}};\partial^k 
\otimes \partial^{n+1-k} \\
&~+\underset{0 \le k \le n}{\sum}\Delta \otimes \mathsf{unsh}(n,k)_A \otimes 1_A;1_{\oc A \otimes \oc A} \otimes 1_{A^{\otimes k}} \otimes \gamma_{A^{\otimes (n-k)},A}; 
&\text{\scriptsize{\cref{Mac-0}}} \\
&~1_{\oc A} \otimes \gamma_{\oc A, A^{\otimes (k+1)}} \otimes 1_{A^{\otimes (n-k)}};\partial^{k+1} \otimes \partial^{n-k} \\
&=\Delta \otimes 1_{A^{\otimes (n+1)}};1_{\oc A} \otimes \partial^{n+1} &\text{\scriptsize{$\mathsf{unsh}(n,0)_A=1_{A^{\otimes n}}$}} \\
&~+\underset{1 \le k \le n}{\sum}\Delta \otimes \mathsf{unsh}(n,k)_A \otimes 1_A;1_{\oc A} \otimes \gamma_{\oc A, A^{\otimes k}} \otimes 1_{A^{\otimes(n+1-k)}};\partial^{k} 
\otimes \partial^{n+1-k} \\
&~+\underset{0 \le k \le n-1}{\sum}\Delta \otimes \mathsf{unsh}(n,k)_A \otimes 1_A;1_{\oc A \otimes \oc A} \otimes 1_{A^{\otimes k}} \otimes \gamma_{A^{\otimes (n-k)},A} \\
&~1_{\oc A} \otimes \gamma_{\oc A, A^{\otimes (k+1)}} \otimes 1_{A^{\otimes (n-k)}};\partial^{k+1} \otimes \partial^{n-k} \\
&~+\Delta \otimes 1_{A^{\otimes (n+1)}};1_{\oc A} \otimes \gamma_{\oc A, A^{\otimes (n+1)}};\partial^{n+1} \otimes 1_{\oc A} &\text{\scriptsize{$\mathsf{unsh}(n,n)_A
=1_{A^{\otimes n}}$}} \\
&=\Delta \otimes 1_{A^{\otimes (n+1)}};1_{\oc A} \otimes \partial^{n+1} \\
&~+\underset{0 \le k \le n-1}{\sum}\Delta \otimes \mathsf{unsh}(n,k+1)_A \otimes 1_A;1_{\oc A} \otimes \gamma_{\oc A, A^{\otimes (k+1)}} \otimes 1_{A^{\otimes (n-k)}};
\partial^{k+1} \otimes \partial^{n-k} &\text{\scriptsize{reindexing}} \\
&~+\underset{0 \le k \le n-1}{\sum}\Delta \otimes \mathsf{unsh}(n,k)_A \otimes 1_A;1_{\oc A \otimes \oc A} \otimes 1_{A^{\otimes k}} \otimes \gamma_{A^{\otimes (n-k)},A}; \\
&~1_{\oc A} \otimes \gamma_{\oc A, A^{\otimes (k+1)}} \otimes 1_{A^{\otimes (n-k)}};\partial^{k+1} \otimes \partial^{n-k} \\
&~+\Delta \otimes 1_{A^{\otimes (n+1)}};1_{\oc A} \otimes \gamma_{\oc A, A^{\otimes (n+1)}};\partial^{n+1} \otimes 1_{\oc A} \\
&=\Delta \otimes 1_{A^{\otimes (n+1)}};1_{\oc A} \otimes \partial^{n+1} \\
&~+\underset{0 \le k \le n-1}{\sum}\Delta \otimes \mathsf{unsh}(n,k+1)_A \otimes 1_A;1_{\oc A} \otimes \gamma_{\oc A, A^{\otimes (k+1)}} \otimes 1_{A^{\otimes (n-k)}};
\partial^{k+1} \otimes \partial^{n-k} \\
&~+\underset{0 \le k \le n-1}{\sum}(\Delta;1_{\oc A \otimes \oc A}) \otimes  (\mathsf{unsh}(n,k)_A \otimes 1_A;1_{A^{\otimes k}} \otimes \gamma_{A^{\otimes (n-k)},A}); 
&\text{\scriptsize{func.~of $-\otimes -$}} \\
&~1_{\oc A} \otimes \gamma_{\oc A, A^{\otimes (k+1)}} \otimes 1_{A^{\otimes (n-k)}};\partial^{k+1} \otimes \partial^{n-k} \\
&~+\Delta \otimes 1_{A^{\otimes (n+1)}};1_{\oc A} \otimes \gamma_{\oc A, A^{\otimes (n+1)}};\partial^{n+1} \otimes 1_{\oc A} \\
&=\Delta \otimes 1_{A^{\otimes (n+1)}};1_{\oc A} \otimes \partial^{n+1} \\
&~+\underset{0 \le k \le n-1}{\sum}\Delta \otimes \mathsf{unsh}(n,k+1)_A \otimes 1_A;1_{\oc A} \otimes \gamma_{\oc A, A^{\otimes (k+1)}} \otimes 1_{A^{\otimes (n-k)}};
\partial^{k+1} \otimes \partial^{n-k} \\
&~+\underset{0 \le k \le n-1}{\sum}\Delta \otimes (\mathsf{unsh}(n,k)_A \otimes 1_A;1_{A^{\otimes k}} \otimes \gamma_{A^{\otimes (n-k)},A}); &\text{\scriptsize{$\Delta;
1_{\oc A \otimes \oc A}=\Delta$}} \\
&~1_{\oc A} \otimes \gamma_{\oc A, A^{\otimes (k+1)}} \otimes 1_{A^{\otimes (n-k)}};\partial^{k+1} \otimes \partial^{n-k} \\
&~+\Delta \otimes 1_{A^{\otimes (n+1)}};1_{\oc A} \otimes \gamma_{\oc A, A^{\otimes (n+1)}};\partial^{n+1} \otimes 1_{\oc A} \\
&=\Delta \otimes 1_{A^{\otimes (n+1)}};1_{\oc A} \otimes \partial^{n+1} \\
&~+\underset{0 \le k \le n-1}{\sum} \Delta \otimes (\mathsf{unsh}(n,k+1)_A \otimes 1_A+\mathsf{unsh}(n,k)_A \otimes 1_A;1_{A^{\otimes k}} \otimes \gamma_{A^{\otimes(n-k)},A}); &\text{\scriptsize{\cref{add-8,add-3}}} \\
&~1_{\oc A} \otimes \gamma_{\oc A, A^{\otimes (k+1)}} \otimes 1_{A^{\otimes (n-k)}};\partial^{k+1} \otimes \partial^{n-k} \\
&~+\Delta \otimes 1_{A^{\otimes (n+1)}};1_{\oc A} \otimes \gamma_{\oc A, A^{\otimes (n+1)}};\partial^{n+1} \otimes 1_{\oc A} \\
&=\Delta \otimes 1_{A^{\otimes (n+1)}};1_{\oc A} \otimes \partial^{n+1} \\
&~+\underset{0 \le k \le n-1}{\sum}\Delta \otimes \mathsf{unsh}(n+1,k+1)_A;1_{\oc A} \otimes \gamma_{\oc A, A^{\otimes (k+1)}} \otimes 1_{A^{\otimes (n-k)}};
\partial^{k+1} \otimes \partial^{n-k} &\text{\scriptsize{\cref{ind-lemma}}} \\
&~+\Delta \otimes 1_{A^{\otimes (n+1)}};1_{\oc A} \otimes \gamma_{\oc A, A^{\otimes (n+1)}};\partial^{n+1} \otimes 1_{\oc A} \\
&=\Delta \otimes 1_{A^{\otimes (n+1)}};1_{\oc A} \otimes \partial^{n+1} \\
&~+\underset{0 \le k \le n}{\sum}\Delta \otimes \mathsf{unsh}(n+1,k+1)_A;1_{\oc A} \otimes \gamma_{\oc A, A^{\otimes (k+1)}} \otimes 1_{A^{\otimes (n-k)}};
\partial^{k+1} \otimes \partial^{n-k}&\text{\scriptsize{$\mathsf{unsh}(n+1,n+1)_A=1_{A^{\otimes (n+1)}}$}} \\
&=\Delta \otimes 1_{A^{\otimes (n+1)}};1_{\oc A} \otimes \partial^{n+1} \\
&~+\underset{1 \le k \le n+1}{\sum}\Delta \otimes \mathsf{unsh}(n+1,k)_A;1_{\oc A} \otimes \gamma_{\oc A, A^{\otimes k}} \otimes 1_{A^{\otimes (n+1-k)}};
\partial^k \otimes \partial^{n+1-k}&\text{\scriptsize{reindexing}} \\
&=\underset{0 \le k \le n+1}{\sum}\Delta \otimes \mathsf{unsh}(n+1,k)_A;1_{\oc A} \otimes \gamma_{\oc A, A^{\otimes k}} \otimes ~1_{A^{\otimes (n+1-k)}};
\partial^k \otimes \partial^{n+1-k}. &\text{\scriptsize{$1_{A^{\otimes (n+1)}}=\mathsf{unsh}(n+1,0)_A$}} \\
\end{align*}
\begin{remark} \label{Mac-0}
Mac Lane's coherence theorem for symmetric monoidal categories implies that for all $X,Y,Z,W \in \mathsf{C}$, we have:
\begin{equation*}
\gamma_{X,Y} \otimes 1_{Z \otimes W};1_Y \otimes \gamma_{X \otimes Z,W}=1_{X \otimes Y} \otimes \gamma_{Z,W};\gamma_{X,Y \otimes W} \otimes 1_Z\colon X \otimes Y \otimes Z \otimes W \rightarrow Y \otimes W \otimes X \otimes Z.
\end{equation*}
Choosing $X=\oc A$, $Y=A^{\otimes k}$, $Z=A^{\otimes (n-k)}$ and $W=A$, we obtain 
\begin{equation*}
\gamma_{\oc A,A^{\otimes k}} \otimes 1_{A^{\otimes (n+1-k)}};1_{A^{\otimes k}} \otimes \gamma_{\oc A \otimes A^{\otimes (n-k)},A}=1_{\oc A \otimes A^{\otimes k}} \otimes \gamma_{A^{\otimes (n-k)},A};\gamma_{\oc A,A^{\otimes (k+1)}} \otimes 1_{A^{\otimes (n-k)}}.
\end{equation*}
\end{remark}
\end{itemize}
\paragraph{Proof of \cref{id4}:} \label{proof:higer-order-product-2} The proof is by induction on $n \ge 1$.
\begin{itemize}
\item Base case ($n=1$): The LHS is equal to $\partial;\Delta^1=\partial;1_{\oc A}=\partial$ and the RHS is equal to $\Delta^1 \otimes 1_A;1_{\oc A} \otimes \gamma_{I,A};1_I \otimes \partial \otimes 1_I=1_{\oc A} \otimes 1_A;1_{\oc A} \otimes 1_A;\partial=\partial$, hence the equality.
\item Induction step: Suppose that the identity holds for some $n \ge 1$. We show that the identity holds for $n+1$. We compute:
\begin{align*}
&\partial;\Delta^{n+1} \\
&=\partial;\Delta^n;1_{(\oc A)^{\otimes(n-1)}} \otimes \Delta &\text{\scriptsize{\cref{Delta-rec}}} \\
&=\Big(\underset{1 \le i \le n}{\sum}\Delta^n \otimes 1_A;1_{(\oc A)^{\otimes i}} \otimes \gamma_{(\oc A)^{\otimes(n-i)},A};1_{(\oc A)^{\otimes(i-1)}} 
\otimes \partial \otimes 1_{(\oc A)^{\otimes(n-i)}}\Big);1_{(\oc A)^{\otimes(n-1)}} \otimes \Delta &\text{\scriptsize{induction~hyp.~}} \\
&=\underset{1 \le i \le n}{\sum}\Delta^n \otimes 1_A;1_{(\oc A)^{\otimes i}} \otimes \gamma_{(\oc A)^{\otimes(n-i)},A};
1_{(\oc A)^{\otimes(i-1)}} \otimes \partial \otimes 1_{(\oc A)^{\otimes(n-i)}};1_{(\oc A)^{\otimes (n-1)}} \otimes \Delta &\text{\scriptsize{\cref{add-3}}} \\
&=\underset{1 \le i \le n-1}{\sum}\Delta^n \otimes 1_A;1_{(\oc A)^{\otimes i}} \otimes \gamma_{(\oc A)^{\otimes(n-i)},A};
1_{(\oc A)^{\otimes(i-1)}} \otimes \partial \otimes 1_{(\oc A)^{\otimes(n-i)}};1_{(\oc A)^{\otimes (n-1)}} \otimes \Delta \\
&~+\Delta^n \otimes 1_A;1_{(\oc A)^{\otimes n}} \otimes \gamma_{I,A};1_{(\oc A)^{\otimes (n-1)}} \otimes \partial \otimes 1_I;1_{(\oc A)^{\otimes(n-1)}} \otimes \Delta \\
&=\underset{1 \le i \le n-1}{\sum}\Delta^n \otimes 1_A;1_{(\oc A)^{\otimes i}} \otimes \gamma_{(\oc A)^{\otimes(n-i)},A};
1_{(\oc A)^{\otimes(i-1)}} \otimes \partial \otimes  1_{(\oc A)^{\otimes((n-1)-i)}} \otimes 1_{\oc A}; &\text{\scriptsize{func.~of $-\otimes-$}} \\
&~1_{(\oc A)^{\otimes (n-1)}} \otimes \Delta+\Delta^n \otimes 1_A;1_{(\oc A)^{\otimes (n-1)}} \otimes \partial;1_{(\oc A)^{\otimes(n-1)}} \otimes \Delta 
&\text{\scriptsize{$\gamma_{I,A}=1_A$ and $\partial \otimes 1_I=\partial$}} \\
&=\underset{1 \le i \le n-1}{\sum}\Delta^n \otimes 1_A;1_{(\oc A)^{\otimes i}} \otimes \gamma_{(\oc A)^{\otimes(n-i)},A};
1_{(\oc A)^{\otimes(i-1)}} \otimes \partial \otimes  1_{(\oc A)^{\otimes((n-1)-i)}} \otimes \Delta &\text{\scriptsize{func.~of $-\otimes-$}} \\
&~+\Delta^n \otimes 1_A;1_{(\oc A)^{\otimes (n-1)}} \otimes (\partial;\Delta) &\text{\scriptsize{func.~of $-\otimes-$}} \\
&=\underset{1 \le i \le n-1}{\sum}\Delta^n \otimes 1_A;1_{(\oc A)^{\otimes i}} \otimes \gamma_{(\oc A)^{\otimes(n-i)},A};
1_{(\oc A)^{\otimes (i-1)}} \otimes 1_{\oc A \otimes A} \otimes 1_{(\oc A)^{\otimes ((n-1)-i)}} \otimes \Delta; \\
&~1_{(\oc A)^{\otimes (i-1)}} \otimes \partial \otimes 1_{(\oc A)^{\otimes ((n-1)-i)}} \otimes 1_{\oc A \otimes \oc A} &\text{\scriptsize{func.~of $-\otimes-$}} \\
&~+\Delta^n \otimes 1_A;1_{(\oc A)^{\otimes (n-1)}} \otimes (\Delta \otimes 1_A;1_{\oc A} \otimes \partial+\Delta \otimes 1_A;1_{\oc A} \otimes \gamma_{\oc A,A};
\partial \otimes 1_{\oc A}) &\text{\scriptsize{\cref{def-d2}}} \\
&=\underset{1 \le i \le n-1}{\sum}\Delta^n \otimes 1_A;1_{(\oc A)^{\otimes i}} \otimes \gamma_{(\oc A)^{\otimes(n-i)},A};
1_{(\oc A)^{\otimes i}} \otimes (1_A \otimes (1_{(\oc A)^{\otimes ((n-1)-i)}} \otimes \Delta)); &\text{\scriptsize{func.~of $-\otimes-$}} \\
&~1_{(\oc A)^{\otimes (i-1)}} \otimes \partial \otimes 1_{(\oc A)^{\otimes ((n-1)-i)}} \otimes 1_{\oc A \otimes \oc A} \\
&~+\Delta^n \otimes 1_A;1_{(\oc A)^{\otimes (n-1)}} \otimes (\Delta \otimes 1_A;1_{\oc A} \otimes \partial) &\text{\scriptsize{\cref{add-8}}} \\
&~+\Delta^n \otimes 1_A;1_{(\oc A)^{\otimes (n-1)}} \otimes (\Delta \otimes 1_A;1_{\oc A} \otimes \gamma_{\oc A,A};\partial \otimes 1_{\oc A}) \\
&=\underset{1 \le i \le n-1}{\sum}\Delta^n \otimes 1_A;
1_{(\oc A)^{\otimes i}} \otimes (\gamma_{(\oc A)^{\otimes(n-i)},A};1_A \otimes (1_{(\oc A)^{\otimes ((n-1)-i)}} \otimes \Delta)); &\text{\scriptsize{func.~of $-\otimes-$}} \\
&~1_{(\oc A)^{\otimes (i-1)}} \otimes \partial \otimes 1_{(\oc A)^{\otimes ((n-1)-i)}} \otimes 1_{\oc A \otimes \oc A} \\
&~+\Delta^n \otimes 1_A;1_{(\oc A)^{\otimes (n-1)}} \otimes \Delta \otimes 1_A;1_{(\oc A)^{\otimes (n-1)}} \otimes 1_{\oc A} \otimes \partial 
&\text{\scriptsize{func.~of $-\otimes-$}} \\
&~+\Delta^n \otimes 1_A;1_{(\oc A)^{\otimes (n-1)}} \otimes \Delta \otimes 1_A;1_{(\oc A)^{\otimes (n-1)}} \otimes 1_{\oc A} \otimes \gamma_{\oc A,A};
1_{(\oc A)^{\otimes (n-1)}} \otimes \partial \otimes 1_{\oc A} &\text{\scriptsize{func of $-\otimes-$}} \\
&=\underset{1 \le i \le n-1}{\sum}\Delta^n \otimes 1_A;
1_{(\oc A)^{\otimes i}} \otimes ((1_{(\oc A)^{\otimes (n-1-i)}} \otimes \Delta) \otimes 1_A;\gamma_{(\oc A)^{\otimes (n+1-i)},A}); &\text{\scriptsize{nat.~of $\gamma$}} \\
&~1_{(\oc A)^{\otimes (i-1)}} \otimes \partial \otimes 1_{(\oc A)^{\otimes ((n-1)-i)}} \otimes 1_{\oc A \otimes \oc A} \\
&~+(\Delta^n;1_{(\oc A)^{\otimes (n-1)}} \otimes \Delta) \otimes 1_A;1_{(\oc A)^{\otimes (n-1)}} \otimes 1_{\oc A} \otimes \partial &\text{\scriptsize{func.~of $-\otimes-$}} \\
&~+(\Delta^n;1_{(\oc A)^{\otimes (n-1)}} \otimes \Delta) \otimes 1_A;1_{(\oc A)^{\otimes (n-1)}} \otimes 1_{\oc A} \otimes \gamma_{\oc A,A};1_{(\oc A)^{\otimes (n-1)}} 
\otimes \partial \otimes 1_{\oc A} &\text{\scriptsize{func.~of $-\otimes-$}} \\
&=\underset{1 \le i \le n-1}{\sum}\Delta^n \otimes 1_A;
1_{(\oc A)^{\otimes i}} \otimes (1_{(\oc A)^{\otimes(n-1-i)}} \otimes \Delta \otimes 1_A);1_{(\oc A)^{\otimes i}} \otimes \gamma_{(\oc A)^{\otimes (n+1-i)},A}; 
&\text{\scriptsize{func.~of $-\otimes-$}} \\
&~1_{(\oc A)^{\otimes (i-1)}} \otimes \partial \otimes 1_{(\oc A)^{\otimes ((n-1)-i)}} \otimes 1_{\oc A \otimes \oc A} \\
&~+\Delta^{n+1} \otimes 1_A;1_{(\oc A)^{\otimes (n-1)}} \otimes 1_{\oc A} \otimes \partial &\text{\scriptsize{\cref{Delta-rec}}} \\
&~+\Delta^{n+1} \otimes 1_A;1_{(\oc A)^{\otimes (n-1)}} \otimes 1_{\oc A} \otimes \gamma_{\oc A,A};1_{(\oc A)^{\otimes (n-1)}} \otimes \partial \otimes 1_{\oc A} 
&\text{\scriptsize{\cref{Delta-rec}}} \\
&=\underset{1 \le i \le n-1}{\sum}\Delta^n \otimes 1_A;
1_{(\oc A)^{\otimes (n-1)}} \otimes \Delta \otimes 1_A;1_{(\oc A)^{\otimes i}} \otimes \gamma_{(\oc A)^{\otimes (n+1-i)},A}; &\text{\scriptsize{func. of $-\otimes-$}} \\
&~1_{(\oc A)^{\otimes (i-1)}} \otimes \partial \otimes 1_{(\oc A)^{\otimes ((n-1)-i)}} \otimes 1_{\oc A \otimes \oc A} \\
&~+\Delta^{n+1} \otimes 1_A;1_{(\oc A)^{\otimes (n-1)}} \otimes 1_{\oc A} \otimes \gamma_{\oc A,A};1_{(\oc A)^{\otimes (n-1)}} \otimes \partial \otimes 1_{\oc A} \\
&~+\Delta^{n+1} \otimes 1_A;1_{(\oc A)^{\otimes (n-1)}} \otimes 1_{\oc A} \otimes \partial \\
&=\underset{1 \le i \le n-1}{\sum}(\Delta^n;
1_{(\oc A)^{\otimes (n-1)}} \otimes \Delta) \otimes 1_A;1_{(\oc A)^{\otimes i}} \otimes \gamma_{(\oc A)^{\otimes (n+1-i)},A}; &\text{\scriptsize{func.~of $-\otimes-$}} \\
&~1_{(\oc A)^{\otimes (i-1)}} \otimes \partial \otimes 1_{(\oc A)^{\otimes ((n-1)-i)}} \otimes 1_{\oc A \otimes \oc A} \\
&~+\Delta^{n+1} \otimes 1_A;1_{(\oc A)^{\otimes n}} \otimes \gamma_{(\oc A)^{\otimes ((n+1)-n)},A};1_{(\oc A)^{\otimes (n-1)}} \otimes \partial \otimes 
1_{(\oc A)^{\otimes ((n+1)-n)}} &\text{\scriptsize{func.~of $-\otimes -$}} \\
&~+\Delta^{n+1} \otimes 1_A;1_{(\oc A)^{\otimes (n+1)}} \otimes \gamma_{(\oc A)^{\otimes ((n+1)-(n+1))},A};1_{(\oc A)^{\otimes ((n+1)-1)}} \otimes 
\partial \otimes 1_{(\oc A)^{\otimes ((n+1)-(n+1))}} &\text{\scriptsize{$\gamma_{I,A}=1_A$ and $f \otimes 1_I=f$}} \\
&=\underset{1 \le i \le n-1}{\sum}\Delta^{n+1} \otimes 1_A;1_{(\oc A)^{\otimes i}} \otimes \gamma_{(\oc A)^{\otimes (n+1-i)},A};1_{(\oc A)^{\otimes (i-1)}} \otimes 
\partial \otimes 1_{(\oc A)^{\otimes ((n+1)-i)}} &\text{\scriptsize{\cref{Delta-rec} and func.~of $\otimes$}} \\
&~+\Delta^{n+1} \otimes 1_A;1_{(\oc A)^{\otimes n}} \otimes \gamma_{(\oc A)^{\otimes ((n+1)-n)},A};1_{(\oc A)^{\otimes (n-1)}} \otimes \partial \otimes 
1_{(\oc A)^{\otimes ((n+1)-n)}} \\
&~+\Delta^{n+1} \otimes 1_A;1_{(\oc A)^{\otimes (n+1)}} \otimes \gamma_{(\oc A)^{\otimes ((n+1)-(n+1))},A};1_{(\oc A)^{\otimes ((n+1)-1)}} \otimes \partial \otimes 
1_{(\oc A)^{\otimes ((n+1)-(n+1))}} \\
&=\underset{1 \le i \le n+1}{\sum}\Delta^{n+1} \otimes 1_A;1_{(\oc A)^{\otimes i}} \otimes \gamma_{(\oc A)^{\otimes (n+1-i)},A};1_{(\oc A)^{\otimes (i-1)}} \otimes 
\partial \otimes 1_{(\oc A)^{\otimes ((n+1)-i)}}.
\end{align*}
\end{itemize}

\paragraph{Proof of \cref{id5}:} \label{proof:faà-di-bruno} The proof is by induction on $n \ge 0$.
\begin{itemize}
\item Base case ($n=0$): The LHS is equal to $m$ and the RHS is equal to $\Delta^1 \otimes 1_I;1_{\oc A} \otimes \overline{\tau_{\emptyset}};m=1_{\oc A} \otimes 1_I;1_{\oc A} \otimes 1_I;m=m$ (using \cref{def-tau-pi}).
\item Induction step: Suppose that the identity holds for some $n \ge 0$. We show that the identity holds for $n+1$. We have:
\begin{align*}
&\partial^{n+1};m \\
&=\partial \otimes 1_{A^{\otimes n}};\partial^n;m &\text{\scriptsize{\cref{reverse-higher}}} \\
&=\partial \otimes 1_{A^{\otimes n}};\underset{\pi \in \mathcal{H}(n)}{\sum}\Delta^{1+|\pi|} \otimes 1_{A^{\otimes n}};1_{\oc A} \otimes \overline{\tau_\pi}_{|\pi| \cdot \oc A,n \cdot A};m \otimes 
\partial^{|s_1|} \otimes \dots \otimes \partial^{|s_{|\pi|}|};\partial^{|\pi|} &\text{\scriptsize{induction hyp.}} \\
&=\underset{\pi \in \mathcal{H}(n)}{\sum}\partial \otimes 1_{A^{\otimes n}};\Delta^{1+|\pi|} \otimes 1_{A^{\otimes n}};1_{\oc A} \otimes \overline{\tau_\pi}_{|\pi| \cdot \oc A,n \cdot A};m \otimes 
\partial^{|s_1|} \otimes \dots \otimes \partial^{|s_{|\pi|}|};\partial^{|\pi|} &\text{\scriptsize{\cref{add-4}}} \\
&=\underset{\pi \in \mathcal{H}(n)}{\sum}(\partial;\Delta^{1+|\pi|}) \otimes 1_{A^{\otimes n}};
1_{\oc A} \otimes \overline{\tau_\pi}_{|\pi| \cdot \oc A,n \cdot A};m \otimes \partial^{|s_1|} \otimes \dots 
\otimes \partial^{|s_{|\pi|}|};\partial^{|\pi|}&\text{\scriptsize{func. of $-\otimes-$}} \\
&=\underset{\pi \in \mathcal{H}(n)}{\sum}\Big(\underset{1 \le i \le 1+|\pi|}{\sum}\Delta^{1+|\pi|} \otimes 1_A;1_{(\oc A)^{\otimes i}} \otimes 
\gamma_{(\oc A)^{\otimes(1+|\pi|-i)},A};1_{(\oc A)^{\otimes(i-1)}} \otimes \partial \otimes 1_{(\oc A)^{\otimes(1+|\pi|-i)}}\Big) \otimes 1_{A^{\otimes n}}; \\
&~1_{\oc A} \otimes \overline{\tau_\pi}_{|\pi| \cdot \oc A,n \cdot A};m \otimes \partial^{|s_1|} \otimes \dots 
\otimes \partial^{|s_{|\pi|}|};\partial^{|\pi|} &\text{\scriptsize{\cref{id4}}} \\
&=\underset{\pi \in \mathcal{H}(n)}{\sum}\underset{1 \le i \le 1+|\pi|}{\sum}\Big(\Delta^{1+|\pi|} \otimes 1_A;1_{(\oc A)^{\otimes i}} \otimes 
\gamma_{(\oc A)^{\otimes(1+|\pi|-i)},A};1_{(\oc A)^{\otimes(i-1)}} \otimes \partial \otimes 1_{(\oc A)^{\otimes(1+|\pi|-i)}}\Big) \otimes 1_{A^{\otimes n}}; \\
&~1_{\oc A} \otimes \overline{\tau_\pi}_{|\pi| \cdot \oc A,n \cdot A};m \otimes \partial^{|s_1|} \otimes \dots 
\otimes \partial^{|s_{|\pi|}|};\partial^{|\pi|} &\text{\scriptsize{\cref{add-7}}} \\
&=\underset{\pi \in \mathcal{H}(n)}{\sum}\underset{1 \le i \le 1+|\pi|}{\sum}\Delta^{1+|\pi|} \otimes 1_{A^{\otimes (n+1)}};1_{(\oc A)^{\otimes i}} 
\otimes \gamma_{(\oc A)^{\otimes(1+|\pi|-i)},A} \otimes 1_{A^{\otimes n}};&\text{\scriptsize{func.~of $-\otimes -$}} \\
&~1_{(\oc A)^{\otimes(i-1)}} \otimes \partial \otimes 1_{(\oc A)^{\otimes(1+|\pi|-i)}} \otimes 1_{A^{\otimes n}};1_{\oc A} \otimes \overline{\tau_\pi}_{|\pi| \cdot \oc A,n \cdot A};m \otimes 
\partial^{|s_1|} \otimes \dots 
\otimes \partial^{|s_{|\pi|}|};\partial^{|\pi|} \\
&=\underset{\pi \in \mathcal{H}(n)}{\sum}\Delta^{1+|\pi|} \otimes 1_{A^{\otimes (n+1)}};1_{\oc A} \otimes \gamma_{(\oc A)^{\otimes |\pi|},A} \otimes 1_{A^{\otimes n}}; &\text{\scriptsize{sep.~the sum}} \\
&~\partial \otimes 1_{(\oc A)^{\otimes |\pi|}} \otimes 1_{A^{\otimes n}};1_{\oc A} \otimes \overline{\tau_\pi}_{|\pi| \cdot \oc A,n \cdot A};m \otimes \partial^{|s_1|} \otimes \dots \otimes \partial^{|s_{|\pi|}|};\partial^{|\pi|} \\
&~+\underset{\pi \in \mathcal{H}(n)}{\sum}\underset{2 \le i \le 1+|\pi|}{\sum}\Delta^{1+|\pi|} \otimes 1_{A^{\otimes (n+1)}};1_{(\oc A)^{\otimes i}} 
\otimes \gamma_{(\oc A)^{\otimes(1+|\pi|-i)},A} \otimes 1_{A^{\otimes n}}; \\
&~1_{(\oc A)^{\otimes(i-1)}} \otimes \partial \otimes 1_{(\oc A)^{\otimes(1+|\pi|-i)}} \otimes 1_{A^{\otimes n}};1_{\oc A} \otimes \overline{\tau_\pi}_{|\pi| \cdot \oc A,n \cdot A};m \otimes  
\partial^{|s_1|} \otimes \dots 
\otimes \partial^{|s_{|\pi|}|};\partial^{|\pi|} \\
&=\underset{\pi \in \mathcal{H}(n)}{\sum}\Delta^{1+|\pi|} \otimes 1_{A^{\otimes (n+1)}};1_{\oc A} \otimes \gamma_{(\oc A)^{\otimes |\pi|},A} \otimes 
1_{A^{\otimes n}};\partial \otimes \overline{\tau_\pi}_{|\pi| \cdot \oc A,n \cdot A};m \otimes \partial^{|s_1|} \otimes \dots \otimes \partial^{|s_{|\pi|}|};\partial^{|\pi|} &\text{\scriptsize{func.~of $-\otimes -$}} \\
&~+\underset{\pi \in \mathcal{H}(n)}{\sum}\underset{2 \le i \le 1+|\pi|}{\sum}\Delta^{1+|\pi|} \otimes 1_{A^{\otimes (n+1)}};1_{(\oc A)^{\otimes i}} 
\otimes \gamma_{(\oc A)^{\otimes(1+|\pi|-i)},A} \otimes 1_{A^{\otimes n}}; \\
&~1_{\oc A} \otimes 1_{(\oc A)^{\otimes(i-2)}} \otimes \partial \otimes 1_{(\oc A)^{\otimes(1+|\pi|-i)}} \otimes 1_{A^{\otimes n}};1_{\oc A} \otimes 
\overline{\tau_\pi}_{|\pi| \cdot \oc A,n \cdot A};m \otimes \partial^{|s_1|} \otimes \dots 
\otimes \partial^{|s_{|\pi|}|};\partial^{|\pi|} &\text{\scriptsize{func.~of $-\otimes -$}} \\
&=\underset{\pi \in \mathcal{H}(n)}{\sum}\Delta^{1+|\pi|} \otimes 1_{A^{\otimes (n+1)}};1_{\oc A} \otimes \gamma_{(\oc A)^{\otimes |\pi|},A} \otimes 
1_{A^{\otimes n}};(\partial;m) \otimes \overline{\tau_\pi}_{|\pi| \cdot \oc A,n \cdot A}; \\
&1_{\oc\oc A} \otimes \partial^{|s_1|} \otimes \dots \otimes \partial^{|s_{|\pi|}|};\partial^{|\pi|} &\text{\scriptsize{func.~of $-\otimes -$}} \\
&~+\underset{\pi \in \mathcal{H}(n)}{\sum}\underset{2 \le i \le 1+|\pi|}{\sum}\Delta^{1+|\pi|} \otimes 1_{A^{\otimes (n+1)}};1_{(\oc A)^{\otimes i}} 
\otimes \gamma_{(\oc A)^{\otimes(1+|\pi|-i)},A} \otimes 1_{A^{\otimes n}}; \\
&~1_{\oc A} \otimes (1_{(\oc A)^{\otimes(i-2)}} \otimes \partial \otimes 1_{(\oc A)^{\otimes(1+|\pi|-i)}} \otimes 1_{A^{\otimes n}};\overline{\tau_\pi}_{|\pi| \cdot \oc A,n \cdot A});m 
\otimes \partial^{|s_1|} \otimes \dots 
\otimes \partial^{|s_{|\pi|}|};\partial^{|\pi|} &\text{\scriptsize{func.~of $-\otimes -$}} \\
&=\underset{\pi \in \mathcal{H}(n)}{\sum}\Delta^{1+|\pi|} \otimes 1_{A^{\otimes (n+1)}};1_{\oc A} \otimes \gamma_{(\oc A)^{\otimes |\pi|},A} 
\otimes 1_{A^{\otimes n}};(\Delta \otimes 1_A;m \otimes \partial_A;\partial_{\oc A}) \otimes \overline{\tau_\pi}_{|\pi| \cdot \oc A,n \cdot A}; &\text{\scriptsize{\cref{def-d3}}} \\
&~1_{\oc\oc A} \otimes \partial^{|s_1|} \otimes \dots \otimes \partial^{|s_{|\pi|}|};\partial^{|\pi|} \\
&~+\underset{\pi \in \mathcal{H}(n)}{\sum}\underset{2 \le i \le 1+|\pi|}{\sum}\Delta^{1+|\pi|} \otimes 1_{A^{\otimes (n+1)}};1_{(\oc A)^{\otimes i}} 
\otimes \gamma_{(\oc A)^{\otimes(1+|\pi|-i)},A} \otimes 1_{A^{\otimes n}}; \\ 
&~1_{\oc A} \otimes (\overline{\tau_\pi}_{(i-2) \cdot \oc A, \oc A \otimes A, (1+|\pi|-i) \cdot \oc A, n \cdot A};1_{(\oc A \otimes A^{\otimes |s_1|}) \otimes \dots \otimes (\oc A \otimes A^{\otimes |s_{i-2}|})} \otimes \partial 
\otimes \\
&~1_{A^{\otimes |s_{i-1}|} \otimes (\oc A \otimes A^{\otimes |s_i|}) \otimes \dots \otimes (\oc A \otimes A^{\otimes |s_\pi|})});m \otimes \partial^{|s_1|} \otimes \dots 
\otimes \partial^{|s_{|\pi|}|};\partial^{|\pi|}
&\text{\scriptsize{nat.~of $\overline{\tau_\pi}$}} \\
&=\underset{\pi \in \mathcal{H}(n)}{\sum}\Delta^{1+|\pi|} \otimes 1_{A^{\otimes (n+1)}};1_{\oc A} \otimes \gamma_{(\oc A)^{\otimes |\pi|},A} 
\otimes 1_{A^{\otimes n}};\Delta \otimes 1_{A \otimes (\oc A)^{\otimes |\pi|} \otimes A^{\otimes n}};1_{\oc A \otimes \oc A \otimes A} \\
&~\otimes \overline{\tau_\pi}_{|\pi| \cdot \oc A,n \cdot A}; \\
&~m \otimes \partial_A \otimes 1_{(\oc A \otimes A^{\otimes |s_1|}) \otimes \dots \otimes (\oc A \otimes A^{\otimes |s_{|\pi|}|})};\partial_{\oc A} \otimes \partial^{|s_1|} \otimes \dots \otimes \partial^{|s_{|\pi|}|};\partial^{|\pi|} &\text{\scriptsize{func.~of $-\otimes -$}} \\
&~+\underset{\pi \in \mathcal{H}(n)}{\sum}\underset{2 \le i \le 1+|\pi|}{\sum}\Delta^{1+|\pi|} \otimes 1_{A^{\otimes (n+1)}};1_{(\oc A)^{\otimes i}} 
\otimes \gamma_{(\oc A)^{\otimes(1+|\pi|-i)},A} \otimes 1_{A^{\otimes n}}; \\ 
&~m \otimes \Big(\overline{\tau_\pi}_{(i-2) \cdot \oc A, \oc A \otimes A, (1+|\pi|-i) \cdot \oc A, n \cdot A};\partial^{|s_1|} \otimes \dots \otimes \partial^{|s_{i-2}|} \otimes \big[(\partial \otimes 1_{A^{\otimes(|s_{i-1}|)}});
\partial^{|s_{i-1}|})\big] \otimes \\
&~\partial^{|s_i|} \otimes \dots \otimes \partial^{|s_{|\pi|}|}\Big);\partial^{|\pi|} &\text{\scriptsize{func.~of $-\otimes -$}} \\
&=\underset{\pi \in \mathcal{H}(n)}{\sum}\Delta^{1+|\pi|} \otimes 1_{A^{\otimes (n+1)}};\Delta \otimes \gamma_{(\oc A)^{\otimes |\pi|},A}
\otimes 1_{A^{\otimes n}};1_{\oc A \otimes \oc A \otimes A} \otimes \overline{\tau_\pi}_{|\pi| \cdot \oc A,n \cdot A}; &\text{\scriptsize{func.~of $-\otimes -$}} \\
&~m \otimes \partial_A \otimes 1_{(\oc A \otimes A^{\otimes |s_1|}) \otimes \dots \otimes (\oc A \otimes A^{\otimes |s_{|\pi|}|})};\partial_{\oc A} 
\otimes 1_{(\oc A \otimes A^{\otimes |s_1|}) \otimes \dots \otimes (\oc A \otimes A^{\otimes |s_{|\pi|}|})}; \\
&~1_{\oc\oc A} \otimes \partial^{|s_1|} \otimes \dots \otimes \partial^{|s_{|\pi|}|};\partial^{|\pi|} \\
&~+\underset{\pi \in \mathcal{H}(n)}{\sum}\underset{2 \le i \le 1+|\pi|}{\sum}\Delta^{1+|\pi|} \otimes 1_{A^{\otimes (n+1)}};1_{(\oc A)^{\otimes i}} 
\otimes \gamma_{(\oc A)^{\otimes(1+|\pi|-i)},A} \otimes 1_{A^{\otimes n}}; \\ 
&~m \otimes \Big(\overline{\tau_\pi}_{(i-2) \cdot \oc A, \oc A \otimes A, (1+|\pi|-i) \cdot \oc A, n \cdot A};\partial^{|s_1|} \otimes \dots \otimes \partial^{|s_{i-2}|} \otimes \partial^{|s_{i-1}|+1} \otimes \partial^{|s_i|} 
\otimes \dots \otimes \partial^{|s_{|\pi|}|}\Big);\partial^{|\pi|} &\text{\scriptsize{\cref{reverse-higher}}} \\
&=\underset{\pi \in \mathcal{H}(n)}{\sum}\Delta^{1+|\pi|} \otimes 1_{A^{\otimes (n+1)}};\Delta \otimes 1_{(\oc A)^{\otimes |\pi|} 
\otimes A^{\otimes(n+1)}};1_{\oc A \otimes \oc A} \otimes \gamma_{(\oc A)^{\otimes |\pi|},A} \otimes 1_{A^{\otimes n}}; &\text{\scriptsize{func.~of $-\otimes -$}} \\
&~1_{\oc A \otimes \oc A \otimes A} \otimes \overline{\tau_\pi}_{|\pi| \cdot \oc A,n \cdot A};m \otimes \partial_A \otimes 1_{(\oc A \otimes A^{\otimes |s_1|}) \otimes \dots \otimes (\oc A \otimes A^{\otimes |s_{|\pi|}|})};\\ 
&~\partial_{\oc A} 
\otimes 1_{(\oc A \otimes A^{\otimes |s_1|}) \otimes \dots \otimes (\oc A \otimes A^{\otimes |s_{|\pi|}|})};1_{\oc\oc A} \otimes \partial^{|s_1|} \otimes \dots \otimes \partial^{|s_{|\pi|}|};\partial^{|\pi|} \\
&~+\underset{\pi \in \mathcal{H}(n)}{\sum}\underset{2 \le i \le 1+|\pi|}{\sum}\Delta^{1+|\pi|} \otimes 1_{A^{\otimes (n+1)}};1_{(\oc A)^{\otimes i}} 
\otimes \gamma_{(\oc A)^{\otimes(1+|\pi|-i)},A} \otimes 1_{A^{\otimes n}}; \\ 
&~1_{\oc A} \otimes \overline{\tau_\pi}_{(i-2) \cdot \oc A, \oc A \otimes A, (1+|\pi|-i) \cdot \oc A, n \cdot A};m \otimes \partial^{|s_1|} \otimes \dots \otimes \partial^{|s_{i-2}|} \otimes \partial^{|s_{i-1}|+1} \otimes \partial^{|s_i|} 
\otimes \dots \otimes \partial^{|s_{|\pi|}|};\partial^{|\pi|} &\text{\scriptsize{func.~of $-\otimes -$}} \\
&=\underset{\pi \in \mathcal{H}(n)}{\sum}\Big(\Delta^{1+|\pi|};\Delta \otimes 1_{(\oc A)^{\otimes |\pi|}}\Big) \otimes 1_{A^{\otimes(n+1)}};1_{\oc A \otimes \oc A} 
\otimes \gamma_{(\oc A)^{\otimes |\pi|},A} \otimes 1_{A^{\otimes n}};1_{\oc A \otimes \oc A \otimes A} \otimes \overline{\tau_\pi}_{|\pi| \cdot \oc A,n \cdot A}; \\
&~m \otimes \partial_A \otimes 1_{(\oc A \otimes A^{\otimes |s_1|}) \otimes \dots \otimes (\oc A \otimes A^{\otimes |s_{|\pi|}|})}; &\text{\scriptsize{func.~of $-\otimes -$}} \\
&~\partial_{\oc A} \otimes 1_{(\oc A \otimes A^{\otimes |s_1|}) \otimes \dots \otimes (\oc A \otimes A^{\otimes |s_{|\pi|}|})};1_{\oc\oc A} \otimes \partial^{|s_1|} 
\otimes \dots \otimes \partial^{|s_{|\pi|}|};\partial^{|\pi|}  \\
&~+\underset{\pi \in \mathcal{H}(n)}{\sum}\underset{2 \le i \le 1+|\pi|}{\sum}\Delta^{1+|\pi|} \otimes 1_{A^{\otimes(n+1)}};1_{\oc A} \otimes 
\Big(1_{(\oc A)^{\otimes (i-1)}} \otimes \gamma_{(\oc A)^{\otimes (1+|\pi|-i)},A} \otimes 1_{A^{\otimes n}}; \\
&~\overline{\tau_\pi}_{(i-2) \cdot \oc A, \oc A \otimes A, (1+|\pi|-i) \cdot \oc A, n \cdot A}\Big);m \otimes \partial^{|s_1|} \otimes \dots \otimes \partial^{|s_{i-2}|} \otimes \partial^{|s_{i-1}|+1} \otimes \partial^{|s_i|} \otimes \dots 
\otimes \partial^{|s_{|\pi|}|};\partial^{|\pi|}  &\text{\scriptsize{func.~of $-\otimes -$}} \\
&=\underset{\pi \in \mathcal{H}(n)}{\sum}\Delta^{1+(|\pi|+1)} \otimes 1_{A^{\otimes(n+1)}};1_{\oc A \otimes \oc A} \otimes \gamma_{(\oc A)^{\otimes |\pi|},A} 
\otimes 1_{A^{\otimes n}};1_{\oc A \otimes \oc A \otimes A} \otimes \overline{\tau_\pi}_{|\pi| \cdot \oc A,n \cdot A}; &\text{\scriptsize{coassoc.~\& \cref{Delta-rec}}} \\
&~m \otimes \partial_A \otimes 1_{(\oc A \otimes A^{\otimes |s_1|}) \otimes \dots \otimes (\oc A \otimes A^{\otimes |s_{|\pi|}|})};\partial_{\oc A} 
\otimes 1_{(\oc A \otimes A^{\otimes |s_1|}) \otimes \dots \otimes (\oc A \otimes A^{\otimes |s_{|\pi|}|})}; \\
&~1_{\oc\oc A} \otimes \partial^{|s_1|} \otimes \dots \otimes \partial^{|s_{|\pi|}|};\partial^{|\pi|} \\ \\
&~+\underset{\pi \in \mathcal{H}(n)}{\sum}\underset{2 \le i \le 1+|\pi|}{\sum}\Delta^{1+|\pi|} \otimes 1_{A^{\otimes(n+1)}};1_{\oc A} \otimes 
\overline{\tau_{\rho(\pi,i)}}_{|\pi| \cdot \oc A,(n+1) \cdot A}; &\text{\scriptsize{\cref{Mac-1}}} \\
&~m \otimes \partial^{|s_1|} \otimes \dots \otimes \partial^{|s_{i-2}|} \otimes \partial^{|s_{i-1}|+1} \otimes \partial^{|s_i|} \otimes \dots 
\otimes \partial^{|s_{|\pi|}|};\partial^{|\pi|} \\
&=\underset{\pi \in \mathcal{H}(n)}{\sum}\Delta^{1+(|\pi|+1)} \otimes 1_{A^{\otimes(n+1)}};1_{\oc A} \otimes \Big(1_{\oc A} \otimes 
\gamma_{(\oc A)^{\otimes |\pi|},A} \otimes 1_{A^{\otimes n}};1_{\oc A \otimes A} \otimes \overline{\tau_\pi}_{|\pi| \cdot \oc A,n \cdot A}\Big); &\text{\scriptsize{func.~of $-\otimes -$}} \\
&~m \otimes \partial_A \otimes 1_{(\oc A \otimes A^{\otimes |s_1|}) \otimes \dots \otimes (\oc A \otimes A^{\otimes |s_{|\pi|}|})};\partial_{\oc A} 
\otimes 1_{(\oc A \otimes A^{\otimes |s_1|}) \otimes \dots \otimes (\oc A \otimes A^{\otimes |s_{|\pi|}|})}; \\
&~1_{\oc\oc A} \otimes \partial^{|s_1|} \otimes \dots \otimes \partial^{|s_{|\pi|}|};\partial^{|\pi|} \\
&~+\underset{\pi \in \mathcal{H}(n)}{\sum}\underset{2 \le i \le 1+|\pi|}{\sum}\Delta^{1+|\rho(\pi,i)|} \otimes 1_{A^{\otimes(n+1)}};1_{\oc A} 
\otimes \overline{\tau_{\rho(\pi,i)}}_{|\pi| \cdot \oc A,(n+1) \cdot A}; &\text{\scriptsize{\cref{rho-2-first}}} \\
&~m \otimes \partial^{|s_1|} \otimes \dots \otimes \partial^{|s_{i-2}|} \otimes \partial^{|s_{i-1}|+1} \otimes \partial^{|s_i|} \otimes \dots 
\otimes \partial^{|s_{|\pi|}|};\partial^{|\pi|} \\
&=\underset{\pi \in \mathcal{H}(n)}{\sum}\Delta^{1+(|\pi|+1)} \otimes 1_{A^{\otimes(n+1)}};1_{\oc A} \otimes \overline{\tau_{\rho(\pi,1)}}_{(|\pi|+1) \cdot \oc A,(n+1) \cdot A}; &\text{\scriptsize{\cref{Mac-2}}} \\
&~m \otimes \partial_A \otimes 1_{(\oc A \otimes A^{\otimes |s_1|}) \otimes \dots \otimes (\oc A \otimes A^{\otimes |s_{|\pi|}|})};\partial_{\oc A} 
\otimes 1_{(\oc A \otimes A^{\otimes |s_1|}) \otimes \dots \otimes (\oc A \otimes A^{\otimes |s_{|\pi|}|})}; \\
&~1_{\oc\oc A} \otimes \partial^{|s_1|} \otimes \dots \otimes \partial^{|s_{|\pi|}|};\partial^{|\pi|} \\
&~+\underset{\pi \in \mathcal{H}(n)}{\sum}\underset{2 \le i \le 1+|\pi|}{\sum}\Delta^{1+|\rho(\pi,i)|} \otimes 1_{A^{\otimes(n+1)}};1_{\oc A} \otimes 
\overline{\tau_{\rho(\pi,i)}}_{|\pi| \cdot \oc A,(n+1) \cdot A}; \\
&~m \otimes \partial^{|s_1|} \otimes \dots \otimes \partial^{|s_{i-2}|} \otimes \partial^{|s_{i-1}|+1} \otimes \partial^{|s_i|} \otimes \dots \otimes 
\partial^{|s_{|\pi|}|};\partial^{|\pi|} \\
&=\underset{\pi \in \mathcal{H}(n)}{\sum}\Delta^{1+|\rho(\pi,1)|} \otimes 1_{A^{\otimes(n+1)}};1_{\oc A} \otimes \overline{\tau_{\rho(\pi,1)}}_{(|\pi|+1) \cdot \oc A,(n+1) \cdot A};&\text{\scriptsize{\cref{rho-1-first}}} \\
&~m \otimes 
\partial_A \otimes 1_{(\oc A \otimes A^{\otimes |s_1|}) \otimes \dots \otimes (\oc A \otimes A^{\otimes |s_{|\pi|}|})};\partial_{\oc A} \otimes 1_{(\oc A \otimes A^{\otimes |s_1|}) \otimes \dots \otimes (\oc A \otimes A^{\otimes |s_{|\pi|}|})}; \\
&~1_{\oc\oc A} \otimes 
\partial^{|s_1|} \otimes \dots \otimes \partial^{|s_{|\pi|}|};\partial^{|\pi|} \\
&~+\underset{\pi \in \mathcal{H}(n)}{\sum}\underset{2 \le i \le 1+|\pi|}{\sum}\Delta^{1+|\rho(\pi,i)|} \otimes 1_{A^{\otimes(n+1)}};1_{\oc A} \otimes 
\overline{\tau_{\rho(\pi,i)}}_{|\pi| \cdot \oc A,(n+1) \cdot A}; \\
&~m \otimes \partial^{|s_1|} \otimes \dots \otimes \partial^{|s_{i-2}|} \otimes \partial^{|s_{i-1}|+1} \otimes \partial^{|s_i|} \otimes \dots \otimes 
\partial^{|s_{|\pi|}|};\partial^{|\pi|} \\
&=\underset{\pi \in \mathcal{H}(n)}{\sum}\Delta^{1+|\rho(\pi,1)|} \otimes 1_{A^{\otimes(n+1)}};1_{\oc A} \otimes \overline{\tau_{\rho(\pi,1)}}_{(|\pi|+1) \cdot \oc A,(n+1) \cdot A}; \\ 
& m \otimes  \partial_A \otimes 1_{(\oc A \otimes A^{\otimes |s_1|}) \otimes \dots \otimes (\oc A \otimes A^{\otimes |s_{|\pi|}|})};\partial_{\oc A} \otimes \partial^{|s_1|} \otimes \dots \otimes \partial^{|s_{|\pi|}|};\partial^{|\pi|}  &\text{\scriptsize{func.~of~$-\otimes -$}} \\
&~+\underset{\pi \in \mathcal{H}(n)}{\sum}\underset{2 \le i \le 1+|\pi|}{\sum}\Delta^{1+|\rho(\pi,i)|} \otimes 1_{A^{\otimes(n+1)}};1_{\oc A} 
\otimes \overline{\tau_{\rho(\pi,i)}}_{|\pi| \cdot \oc A,(n+1) \cdot A}; \\
&~m \otimes \partial^{|s_1|} \otimes \dots \otimes \partial^{|s_{i-2}|} \otimes \partial^{|s_{i-1}|+1} \otimes \partial^{|s_i|} \otimes \dots 
\otimes \partial^{|s_{|\pi|}|};\partial^{|\pi|} \\
&=\underset{\pi \in \mathcal{H}(n)}{\sum}\Delta^{1+|\rho(\pi,1)|} \otimes 1_{A^{\otimes(n+1)}};1_{\oc A} \otimes \overline{\tau_{\rho(\pi,1)}}_{(|\pi|+1) \cdot \oc A,(n+1) \cdot A};m \otimes 
\partial_A \otimes \partial^{|s_1|} \otimes \dots \otimes \partial^{|s_{|\pi|}|}; \\
&\partial_{\oc A} \otimes 1_{(\oc A)^{\otimes |\pi|}};\partial^{|\pi|} &\text{\scriptsize{func.~of~$-\otimes -$}} \\
&~+\underset{\pi \in \mathcal{H}(n)}{\sum}\underset{2 \le i \le 1+|\pi|}{\sum}\Delta^{1+|\rho(\pi,i)|} \otimes 1_{A^{\otimes(n+1)}};1_{\oc A} 
\otimes \overline{\tau_{\rho(\pi,i)}}_{|\pi| \cdot \oc A,(n+1) \cdot A}; \\
&~m \otimes \partial^{|s_1|} \otimes \dots \otimes \partial^{|s_{i-2}|} \otimes \partial^{|s_{i-1}|+1} \otimes \partial^{|s_i|} \otimes \dots 
\otimes \partial^{|s_{|\pi|}|};\partial^{|\pi|} \\
&=\underset{\pi \in \mathcal{H}(n)}{\sum}\Delta^{1+|\rho(\pi,1)|} \otimes 1_{A^{\otimes(n+1)}};1_{\oc A} \otimes \overline{\tau_{\rho(\pi,1)}}_{(|\pi|+1) \cdot \oc A,(n+1) \cdot A};m \otimes 
\partial_A \otimes \partial^{|s_1|} \otimes \dots \otimes \partial^{|s_{|\pi|}|};\partial^{|\pi|+1} &\text{\scriptsize{\cref{reverse-higher}}} \\
&~+\underset{\pi \in \mathcal{H}(n)}{\sum}\underset{2 \le i \le 1+|\pi|}{\sum}\Delta^{1+|\rho(\pi,i)|} \otimes 1_{A^{\otimes(n+1)}};1_{\oc A} 
\otimes \overline{\tau_{\rho(\pi,i)}}_{|\pi| \cdot \oc A,(n+1) \cdot A}; \\
&~m \otimes \partial^{|s_1|} \otimes \dots \otimes \partial^{|s_{i-2}|} \otimes \partial^{|s_{i-1}|+1} \otimes \partial^{|s_i|} \otimes \dots 
\otimes \partial^{|s_{|\pi|}|};\partial^{|\pi|} \\
&=\underset{\pi \in \mathcal{H}(n)}{\sum}\Delta^{1+|\rho(\pi,1)|} \otimes 1_{A^{\otimes(n+1)}};1_{\oc A} \otimes \overline{\tau_{\rho(\pi,1)}}_{(|\rho(\pi,1)|) \cdot \oc A,(n+1) \cdot A};m \otimes 
\partial^{|t_1|} \otimes \partial^{|t_2|} \otimes \dots \otimes \partial^{|t_{|\rho(\pi,1)|}|}; \\
& \partial^{|\rho(\pi,1)|} 
&\text{\scriptsize{\cref{rho-1-first,rho-1-second,rho-1-third}}} \\
&~+\underset{\pi \in \mathcal{H}(n)}{\sum}\underset{2 \le i \le 1+|\pi|}{\sum}\Delta^{1+|\rho(\pi,i)|} \otimes 1_{A^{\otimes(n+1)}};1_{\oc A} 
\otimes \overline{\tau_{\rho(\pi,i)}}_{|\rho(\pi,i)| \cdot \oc A,(n+1) \cdot A}; \\
&~m \otimes \partial^{|t_1|} \otimes \dots \otimes \partial^{|t_{i-2}|} \otimes \partial^{|t_{i-1}|} \otimes \partial^{|t_i|} \otimes \dots \otimes 
\partial^{|t_{|\rho(\pi,i)|}|};\partial^{|\rho(\pi,i)|} &\text{\scriptsize{\cref{rho-2-first,rho-2-second,rho-2-third}}} \\
&=\underset{\pi \in \mathcal{H}(n)}{\sum}\underset{1 \le i \le 1+|\pi|}{\sum}\Delta^{1+|\rho(\pi,i)|} \otimes 1_{A^{\otimes(n+1)}};1_{\oc A} \otimes 
\overline{\tau_{\rho(\pi,i)}}_{|\rho(\pi,i)| \cdot \oc A,(n+1) \cdot A};m \otimes \partial^{|t_1|} \otimes \dots \otimes \partial^{|t_{|\rho(\pi,i)|}|}; \\
&~\partial^{|\rho(\pi,i)|} \\
&=\underset{(\pi,i) \in \underset{\pi \in \mathcal{H}(n)}{\bigsqcup}\{1,\dots,|\pi|+1\}}{\sum}\Delta^{1+|\rho(\pi,i)|} \otimes 1_{A^{\otimes(n+1)}};1_{\oc A} 
\otimes \overline{\tau_{\rho(\pi,i)}}_{|\rho(\pi,i)| \cdot \oc A,(n+1) \cdot A}; &\text{\scriptsize{\cref{def:disjoint-union}}} \\
&~m \otimes \partial^{|t_1|} \otimes \dots \otimes \partial^{|t_{|\rho(\pi,i)|}|};\partial^{|\rho(\pi,i)|} \\
&=\underset{\rho \in \mathcal{H}(n+1)}{\sum}\Delta^{1+|\rho|} \otimes 1_{A^{\otimes(n+1)}};1_{\oc A} \otimes \overline{\tau_{\rho}}_{|\rho| \cdot \oc A,(n+1) \cdot A};m \otimes \partial^{|t_1|} \otimes 
\dots \otimes \partial^{|t_{|\rho|}|};\partial^{|\rho|}. &\text{\scriptsize{\cref{prop-bij}}} \\
\end{align*}
\end{itemize}
\begin{remark} \label{Mac-1}
Mac Lane's coherence theorem for symmetric monoidal categories implies that for all $n \ge 1$, $\pi \in \mathcal{H}(n)$ and $2 \le i \le |\pi|+1$, we have the following equality of natural transformations: 
\begin{equation*}
1_{(\oc A)^{\otimes (i-1)}} \otimes \gamma_{(\oc A)^{\otimes (1+|\pi|-i)},A} \otimes 1_{A^{\otimes n}};\overline{\tau_\pi}_{(i-2) \cdot \oc A, \oc A \otimes A, (1+|\pi|-i) \cdot \oc A, n \cdot A}=\overline{\tau_{\rho(\pi,i)}}_{|\pi| \cdot \oc A,(n+1) \cdot A}.
\end{equation*}
Write $\pi=\{s_1,\dots,s_{|\pi|}\}$ where $\mathrm{max}\,s_1<\dots<\mathrm{max}\,s_{|\pi|}$ and $s_j=\{k_j^1<\dots<k_{j}^{|s_j|}\}$ for every $1 \le j \le |\pi|$. 

Mac Lane's coherence theorem implies the following equality of natural transformations:
\begin{align*}
&1_{A_1 \otimes \dots \otimes A_{i-1}} \otimes \gamma_{A_i \otimes \dots \otimes A_{|\pi|},B_1} \otimes 1_{B_2 \otimes \dots \otimes B_{n+1}};\overline{\tau_\pi}_{A_1,\dots,A_{i-2},A_{i-1} \otimes B_1,A_i,\dots,A_{|\pi|},B_2,\dots,B_{n+1}} \\
&=\overline{\tau_{\rho(\pi,i)}}_{A_1,\dots,A_{|\pi|},B_1,\dots,B_{n+1}}
\end{align*}
for every $2 \le i \le |\pi|+1$ since these are two morphisms in $\mathsf{Pit}_\otimes(\mathsf{C})$ with domain
\begin{equation*}
A_1\otimes \dots \otimes A_{|\pi|} \otimes B_1 \otimes \dots \otimes B_{n+1}
\end{equation*}
and codomain
\begin{align*}
&A_1 \otimes B_{k_1^1+1} \otimes \dots \otimes B_{k_1^{|s_1|}+1} \otimes \dots \otimes A_{i-2} \otimes B_{k_{i-2}^1+1} \otimes \dots \otimes B_{k_{i-2}^{|s_{i-2}|}+1} \otimes A_{i-1} \\
&\otimes B_1 \otimes B_{k_{i-1}^1+1} \otimes \dots \otimes B_{k_{i-1}^{|s_{i-1}|}+1} \otimes A_i \otimes B_{k_i^1+1} \otimes \dots \otimes B_{k^{|s_i|}_i+1} \\
&\otimes \dots \otimes A_{|\pi|} \otimes B_{k_{|\pi|}^1+1} \otimes \dots \otimes B_{k_{|\pi|}^{|s_{\pi}|}+1}.
\end{align*}
Note that the codomain of the LHS is given by \cref{def-tau-pi} and the domain and codomain of the RHS are given by \cref{def-tau-pi,eq-rho-pi-i,recorded-facts}.

Choosing $A_1=\dots=A_{|\pi|}:=\oc A$ and $B_1=\dots=B_{n+1}:=A$, we obtain the desired identity.
\end{remark}
\begin{remark} \label{Mac-2}
Mac Lane's coherence theorem for symmetric monoidal categories implies that for all $n \ge 0$ and $\pi \in \mathcal{H}(n)$, we have the following equality of natural transformations: 
\begin{equation*}
1_{\oc A} \otimes 
\gamma_{(\oc A)^{\otimes(|\pi|)},A} \otimes 1_{A^{\otimes n}};1_{\oc A \otimes A} \otimes \overline{\tau_\pi}_{|\pi| \cdot \oc A,n \cdot A}=\overline{\tau_{\rho(\pi,1)}}_{(|\pi|+1) \cdot \oc A,(n+1) \cdot A}.
\end{equation*}
Write $\pi=\{s_1,\dots,s_{|\pi|}\}$ where $\mathrm{max}\,s_1<\dots<\mathrm{max}\,s_{|\pi|}$ and $s_j=\{k_j^1<\dots<k_{j}^{|s_j|}\}$ for every $1 \le j \le |\pi|$.

Mac Lane's coherence theorem implies the following equality of natural transformations:
\begin{align*}
&1_{A_1} \otimes \gamma_{A_2 \otimes \dots \otimes A_{|\pi|+1},B_1} \otimes 1_{B_2 \otimes \dots \otimes B_{n+1}};1_{A_1 \otimes B_1} \otimes \overline{\tau_\pi}_{A_2,\dots,A_{|\pi|+1},B_2,\dots,B_{n+1}} \\
&=\overline{\tau_{\rho(\pi,1)}}_{A_1,\dots,A_{|\pi|+1},B_1,\dots,B_{n+1}}
\end{align*}
since these are two morphisms in $\mathsf{Pit}_\otimes(\mathsf{C})$ with domain
\begin{equation*}
A_1 \otimes \dots \otimes A_{|\pi|+1} \otimes B_1 \otimes \dots \otimes B_{n+1}
\end{equation*}
and codomain
\begin{equation*}
A_1 \otimes B_1 \otimes A_2 \otimes B_{k_1^1+1} \otimes \dots \otimes B_{k_1^{|s_1|}+1} \otimes \dots \otimes A_{|\pi|+1} \otimes B_{k_{|\pi|}^1+1} \otimes \dots \otimes B_{k_{|\pi|}^{|s_{|\pi|}|}+1}.
\end{equation*}
Note that the codomain of the LHS is given by \cref{def-tau-pi} and the domain and codomain of the RHS are given by \cref{def-tau-pi,eq-rho-pi-1,recorded-facts}.

Choosing $A_1=\dots=A_{|\pi|+1}:=\oc A$ and $B_1=\dots=B_{n+1}:=A$, we obtain the desired identity.
\end{remark}
\end{document}